\documentclass[11pt]{amsart}

\usepackage{cite}

\usepackage[left=1in,right=1in,top=1in, bottom=1in]{geometry}

\usepackage{amsmath, amsthm, amssymb, amsfonts}
\usepackage{bbm}
\usepackage{mathrsfs}
\usepackage{mathtools}

\usepackage{empheq}
\usepackage{graphicx}
\usepackage{enumitem}
\usepackage{setspace}
\usepackage{float}
\usepackage{hyperref}
\usepackage[utf8]{inputenc}
\usepackage[english]{babel}
\usepackage{framed}
\usepackage{ragged2e}
\usepackage{algorithm}
\usepackage[noend]{algpseudocode}
\usepackage[dvipsnames]{xcolor}
\usepackage{tcolorbox}
\usepackage{chapterbib}
\usepackage{soul} 
\usepackage{tikz}
\usetikzlibrary{calc}

\sethlcolor{red}

\colorlet{LightGray}{White!90!Periwinkle}
\colorlet{LightOrange}{Orange!15}
\colorlet{LightGreen}{Green!15}

\newcommand{\phiitem}[2]{%
  \phantomsection            
  \item[#2]%
  \def\@currentlabel{#2}
  \label{#1}%
}

\newcommand\numberthis{\addtocounter{equation}{1}\tag{\theequation}}

\newcommand{\2}[1]{\ind_{#1}}

\DeclareMathOperator{\E}{\mathbb E}
\renewcommand{\P}{\mathbb P}
\renewcommand{\tilde}{\widetilde}
\newcommand{\supp}{\text{supp}}
\renewcommand{\epsilon}{\varepsilon}

\renewcommand{\forall}{\text{for all }}

\newcommand{\rb}{{\mathrm b}}
\newcommand{\rr}{{\mathrm r}}

\newcommand{\ud}{{\mathrm d}}

\newcommand{\ind}{\mathbbm{1}}

\newcommand{\R}{{\mathbb R}}

\newcommand{\N}{{\mathbb N}}

\newcommand{\Q}{{\mathbb Q}}
\newcommand{\Sph}{{\mathbb{S}^{d-1}}}

\newcommand{\cA}{{\mathcal A}}
\newcommand{\cB}{{\mathcal B}}

\newcommand{\cE}{{\mathcal E}}

\newcommand{\cH}{{\mathcal H}}

\newcommand{\cK}{{\mathcal K}}
\newcommand{\cL}{{\mathcal L}}

\newcommand{\cN}{{\mathcal N}}
\newcommand{\cP}{{\mathcal P}}
\newcommand{\cQ}{{\mathcal Q}}

\newcommand{\cS}{{\mathcal S}}
\newcommand{\cT}{{\mathcal T}}
\newcommand{\cW}{{\mathcal W}}

\newcommand{\cV}{{\mathcal V}}

\newcommand{\Ltr}{{\mathcal L}_{\mathrm{tr}}}
\newcommand{\Lv}{{\mathcal L}_{\mathrm{v}}}

\newtheorem{theorem}{Theorem}[section]

\newtheorem{lemma}[theorem]{Lemma}
\newtheorem{corollary}[theorem]{Corollary}

\theoremstyle{definition}

\theoremstyle{remark}
\newtheorem{remark}[theorem]{Remark}

\theoremstyle{plain}

\begin{document}

\title[Hypocoercivity of Tempered Bouncy Particle Samplers]{Hypocoercivity of Tempered Bouncy Particle Samplers for Heavy-Tailed Targets}
\author{Aleksandar Mijatovi\'c and Vinayak Niraj}
\date{\today}

\subjclass[2020]{Primary 65C05, 
65C40; 
Secondary 60J25
}

\keywords{Tempered Bouncy Particle Sampler, Piecewise deterministic Markov process,  hypocoercivity, Weighted Poincar\'e inequality, Markov chain Monte Carlo, mixing time, non-asymptotic bounds}

\begin{abstract}
We introduce the Tempered Bouncy Particle Sampler~\eqref{eq:TBP}, a state-dependently tempered generalisation of the Bouncy Particle Sampler. Adapting the hypocoercivity framework of Dolbeault, Mouhot and Schmeiser~\cite{dolbeault_hypocoercivity_2010}, we establish explicit, non-asymptotic exponential convergence bounds for a broad class of heavy-tailed targets that need not be log-concave or radially symmetric, including distributions with polynomial tails. The convergence rate is linked to the spectral gap of a tempered Langevin diffusion with the same invariant distribution, making a weighted Poincar\'e inequality the key assumption. We give admissible tempering choices for polynomial and stretched-exponential tails and derive mixing-time bounds with polynomial dependence on the dimension.
\end{abstract}

\maketitle

\section{Introduction}\label{sec:introduction}
The Bouncy Particle Sampler (BPS) is a nonreversible piecewise deterministic Markov process designed to sample from a target distribution known only up to its normalizing constant. The process augments position with velocity and follows linear trajectories interrupted by random bounce and refreshment events, yielding an event-driven, rejection-free algorithm that can exploit gradients and factorizations of the target in high-dimensional Bayesian models~\cite{bouchard-cote_bouncy_2018}. Its convergence can nevertheless be intrinsically slow for heavy-tailed targets. If the velocity is of bounded magnitude,  finite propagation speed implies that the total variation distance at time $t$ is at least one half of the target mass outside the region reachable from the initial position by time $t$; polynomial tails thus prevent the total variation distance from decaying faster than polynomially~\cite{roberts_polynomial_2023}. This obstruction is reflected in existing theory: exponential ergodicity of the unmodified BPS is established under substantially lighter tail conditions, whereas heavy-tailed targets exhibit subgeometric and, in representative cases, sharp polynomial convergence~\cite{deligiannidis_exponential_2019,christophe_andrieu_subgeometric_2021,roberts_polynomial_2023}. This finite-speed limitation motivates the tempered dynamics, which we now introduce.

\subsection{Model and main results} \label{sec:result}
We now define a tempered variant of the BPS that targets a stationary distribution $\mu$ on $\R^d$ with a density proportional to $\exp(-U)$, i.e., $\mu(\ud x) = \exp(-U(x))\frac{\ud x}{Z_U}$ for $Z_U\in(0,\infty)$, the (typically unknown) normalisation constant.
Let $\sigma: \R^d \rightarrow [1, \infty)$ be a smooth, Lipschitz function. 
Define the following joint distribution on the position-velocity space $\R^d \times \R^d$:
\begin{equation}
\label{eq:mu_sigma_def}
    \mu_\sigma(\ud x , \ud v) \coloneqq 
    \mu(\ud x) \kappa_x(\ud v),\quad\text{where}\quad \kappa_x (\ud v ) \coloneqq \frac{\exp\left(-|v|^2/(2\sigma^2(x))\right)}{(2\pi \sigma^2(x))^{d/2}}  \ud v,\quad (x,v)\in\R^d \times \R^d,
\end{equation} 
is the conditional, position-dependent velocity distribution. 
Define the joint potential function $H : \R^d \times \R^d \rightarrow \R$ of the measure $\mu_\sigma$ in~\eqref{eq:mu_sigma_def}, 
i.e., $\mu_\sigma (\ud x, \ud v ) = \exp(-H(x, v)) \, \ud x\ud v / (Z_U (2 \pi)^{d/2})$,  by
    $H(x, v) \coloneqq   U (x) + |v|^2/(2 \sigma^2(x)) + d\log \sigma(x)$. Its 
    spatial gradient $\nabla_x H : \R^d \times \R^d \rightarrow \R^d$ equals
     \begin{equation}
         \nabla_x H(x, v)  =  \nabla_x U (x) + \frac{1}{{\sigma(x)} }\left[d - \frac{|v|^2}{\sigma^2(x)}  \right]   \nabla_x \sigma(x) . \label{eq:nablaH}
     \end{equation}
The process $(X, V) = (X_t, V_t)_{t \in \R_+}$ satisfying the following stochastic differential equation is the \textit{Tempered Bouncy Particle Sampler}: for any $t \in \R_+$ and $(X_0,V_0)\in \R^d \times \R^d$ it holds
\begin{equation}
    \begin{aligned}
        X_t = X_0 &+ \int_0^t V_s \, \ud s,\\
        V_t =V_0  &+  \int_0^t (R_{\nabla_x H  (X_s, V_s-)} V_{s-} - V_{s-}) \, \ud N_s^\mathrm{b} + \int_0^t(\tilde V_s - V_{s-}) \,\ud N_s^\mathrm{r},
    \end{aligned}
     \label{eq:TBP} \tag{T-BPS}
\end{equation}
where $N^\mathrm{r} \coloneqq (N_t^\mathrm{r})_{t\in\R_+}$ is a Poisson process with constant intensity $\lambda_\rr > 0$. At each jump time $s>0$ of $N^\mathrm{r}$,  $\tilde V_s$ is sampled from the law $\kappa_{X_s}$ on $\R^d$, independently of the path $(X_t,V_t)_{t\in[0,s)}$ prior to the jump time $s$.  For any  $u \in \R^d \setminus \{0\}$ and $b \in \R^d$, the map $R_u b \coloneqq b - 2|u|^{-2}\langle b , u\rangle {u}$ is the reflection of $b$ in the $u$-direction (denote by $\langle \cdot, \cdot \rangle$ and $|\cdot|$ the standard inner product and norm in $\R^d$, respectively, and set $R_0 b \coloneqq b$). 
The counting process 
$N^\mathrm{b}\coloneqq (N_t^\mathrm{b})_{t\in\R_+}$ 
in $\N\coloneqq \{0,1,\ldots\}$ has positive jumps of size one arriving at a stochastic intensity given by 
\begin{equation}\lambda_\rb{(X_t, V_t)} \coloneqq  \langle V_t, \nabla_x H (X_t, V_t)\rangle_+ 
\label{eq:deflambdab}
\end{equation}
(throughout the paper, for $a\in\R$,  we denote $a_+\coloneqq \max\{0,a\}$).
Note that the jumps at time $s > 0$ of the counting processes $N^\rb$ and $N^\rr$ are independent of each other and of the path $(X_t, V_t)_{t \in [0, s)}$. 

A basic but crucial property of the dynamics $(X,V)$ in~\eqref{eq:TBP} is that the flow between velocity jumps is linear,
$X_{t+s}=X_t+sV_t$
whenever
$N_{t+s}^{\mathrm b}=N_t^{\mathrm b}$ \&
$N_{t+s}^{\mathrm r}=N_t^{\mathrm r}$,
and hence globally defined. Since the jump rates $\lambda_{\rb}$ and $\lambda_{\rr}$ are locally bounded,~\cite[Construction~1]{durmus_piecewise_2021} implies that $(X, V)$ exists for all $t\in\R_+$ as a non-explosive piecewise deterministic Markov process (PDMP).

\subsubsection{The main result}
We start by stating assumptions on the potential $U$ (of the target density $\mu$) and the tempering function $\sigma:\R^d\to[1,\infty)$ featuring in the gradient $\nabla_x H$ in~\eqref{eq:nablaH} above.
\begin{enumerate}[start=0,label={(\bfseries A\arabic*)}]
    \item \label{ass:wpi}The following weighted Poincar\'e inequality holds for $\mu$ with weight $\sigma$ and constant $m > 0$:
    \begin{equation*}
    m\| g - \mu(g)\|_{L^2(\mu)}^2 \leq \int_{\R^d} \sigma^2|\nabla_x g|^2 \,\ud \mu \qquad \text{for all } g \in C^1_b(\R^d). 
    \end{equation*}

    \item \label{ass:sigmabounds}  There exist constants $ M_D, M_H, M_{U, \sigma}\geq 0$ such that
$$|\nabla_x \sigma| \leq M_D,\qquad \sigma|\nabla_x^2 \sigma|_F \leq M_H,\qquad  \sigma|\nabla_x U |  \leq M_{U, \sigma},$$
and assume $\sup_{x \in \R^d}\sigma^2(x)|\nabla^2_x U(x)|_F < \infty$.
    \item \label{ass:curvature} There exists $K \geq 0$ such that $\sigma^2 \nabla^2_x U \succeq - KI_d$, where $I_d$ is the identity matrix in $\R^{d\times d}$.
\end{enumerate}

For matrices $A, B \in \R^{d \times d}$,
the relation $A\succeq B$ means that $A- B $ is positive semi-definite, and the Frobenius matrix norm is defined as $|A|_F \coloneqq \sqrt{\mathrm{Trace}(A^TA)}$.
Note that bounded continuously differentiable functions with bounded derivatives in $C^1_b(\R^d)$ are in the Hilbert space $L^2(\mu)$ of square-integrable functions. 

The weighted Poincar\'e inequality in Assumption~\ref{ass:wpi} is satisfied by a large class of heavy-tailed distributions for choices of $\sigma$ that satisfy the growth conditions in~\ref{ass:sigmabounds} and the weighted negative curvature condition in~\ref{ass:curvature}~\cite{cattiaux_functional_2010, bobkov_weighted_2009}. In particular, Assumptions~\ref{ass:wpi}-\ref{ass:curvature} allow for multi-modal targets with tails that are neither necessarily radially symmetric nor log-concave (cf.~Remark~\ref{rem:tails} below). 
Moreover, we require no integrability of any power of $\sigma$ with respect to $\mu$, yielding non-asymptotic exponential convergence rates for polynomial-tailed targets of any degree (cf. Remark~\ref{rem:sigmaintB}).

Define the following constants based on Assumptions~\ref{ass:wpi}--\ref{ass:curvature}:
\begin{equation}
    \begin{aligned}
     K_0 &\coloneqq 4 d (d+ 2 )\left[1 + (K_1 + 4 (M_{U, \sigma}^2 + 2(d + 12)M_D^2) )/m\right],\\
        K_1 &\coloneqq 3M_{U, \sigma }M_D +(2 + \sqrt d )M_H + \left(2 + 2(\sqrt d + 2)^2\right)M_D^2 + K.
    \end{aligned}
    \label{eq:defconstants}
\end{equation}
Define the  Markov semigroup $(\cP_t)_{t \in \R_+}$ on $C_b(\R^d \times \R^d)$ of~\eqref{eq:TBP}  by $\cP_t f(x,v) \coloneqq \E_{(x, v) } [f(X_t, V_t)]$. Note that $(\cP_t)_{t \in \R_+}$ extends to a unique contraction semigroup on $L^2(\mu_\sigma)$ by Theorem~\ref{thm:Pcontraction} below.

\begin{theorem}~\label{thm:maincnvg}
      Let Assumptions~\ref{ass:wpi}, \ref{ass:sigmabounds}, and~\ref{ass:curvature} hold. Recall the constant $K_0$  in~\eqref{eq:defconstants} and the refresh rate $\lambda_\rr>0$ in~\eqref{eq:TBP}. Then, for all $f \in L^2(\mu_\sigma)$ with $\mu_\sigma(f) = 0$, the  inequality 
    \begin{equation}
        \|\cP_t f\|_{L^2(\mu_\sigma)}^2 \leq 3e^{- \nu t} \|f\|^2_{L^2( \mu_\sigma)} \quad \text{holds for all } t \geq 0 \text{ and } 1/\nu\coloneqq3(4+(2\sqrt{K_0} + \lambda_\rr/(2\sqrt m) )^2)/\lambda_\rr. \label{eq:L2decayresult}
    \end{equation}
  \end{theorem}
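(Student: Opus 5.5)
We follow the Dolbeault--Mouhot--Schmeiser (DMS) scheme in $L^2(\mu_\sigma)$. Write the generator as $\cL = \Ltr + \lambda_\rr(\Pi - I) + \cB$. Here $\Ltr f = \langle v,\nabla_x f\rangle$ is transport, $\Pi f(x) = \int f(x,v)\,\kappa_x(\ud v)$ is the orthogonal projection onto functions of $x$ alone, and $\cB f(x,v) = \lambda_\rb(x,v)\,[f(x,R_{\nabla_x H}v) - f(x,v)]$ is the bounce operator. Since $\mu_\sigma$ is invariant, the symmetric part of $\cL$ in $L^2(\mu_\sigma)$ is dissipative. Its antisymmetric part is $\Ltr + \tfrac12(\cB - \cB^*)$, where the reflection antisymmetry $\lambda_\rb(x,R v) - \lambda_\rb(x,v) = -\langle v,\nabla_x H\rangle$ produces exactly the correction needed because $\mu_\sigma$ is not a product measure. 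Microscopic coercivity is immediate: $-\langle \cL f,f\rangle \ge \lambda_\rr\|(I-\Pi)f\|^2$, since $\cB$ is dissipative. This uses Theorem~\ref{thm:Pcontraction} to work on a core of smooth compactly supported functions and then extend by density.

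For macroscopic coercivity, take $g = \Pi f$ with $\mu(g)=0$ and set $\cA \coloneqq \Ltr$. Since $\Pi\Ltr\Pi = 0$, we compute $\Pi \Ltr^*\Ltr\Pi g$ using $\int v v^T \kappa_x(\ud v) = \sigma^2(x) I_d$. This gives $\|\Ltr \Pi f\|^2 = \int \sigma^2|\nabla_x g|^2\,\ud\mu$, so Assumption~\ref{ass:wpi} yields $\|\Ltr\Pi f\|^2 \ge m\|\Pi f\|^2$. The associated operator is the tempered Langevin generator $\cK g = -\Pi\Ltr^*\Ltr\Pi g = \sigma^2\Delta g + (\nabla\sigma^2 - \sigma^2\nabla U)\cdot\nabla g$, which is how the link to the weighted Poincar\'e inequality enters. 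We then define the DMS operator $\cA = (1 + (\Ltr\Pi)^*\Ltr\Pi)^{-1}(\Ltr\Pi)^*$ and the modified entropy $\cH[f] = \tfrac12\|f\|^2 + \epsilon\langle \cA f,f\rangle$. The standard facts $\|\cA f\|\le \tfrac12\|(I-\Pi)f\|$ and $\|\Ltr \cA f\|\le\|(I-\Pi)f\|$ give $|\langle\cA f,f\rangle|\le \tfrac12\|f\|^2$, so $\cH$ is equivalent to $\tfrac12\|f\|^2$ for small $\epsilon$.

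The main step, and the main obstacle, is bounding the auxiliary terms $\|\cA \Ltr(I-\Pi)f\|$ and $\|\cA(\lambda_\rr(\Pi-I)+\cB)f\|$ by a constant times $\|(I-\Pi)f\|$. For the collision and bounce parts we use $\Pi(\Pi - I)=0$. We also use that $\cA\cB$ is controlled via $\Pi\cB^*$, estimated by $\int \lambda_\rb^2\,\ud\kappa_x$. Here the expansion~\eqref{eq:nablaH} together with Gaussian moments, such as $\int |v|^4\,\ud\kappa_x = d(d+2)\sigma^4$, and the bounds $\sigma|\nabla U|\le M_{U,\sigma}$ and $|\nabla\sigma|\le M_D$ produce the terms $M_{U,\sigma}^2 + (d+12)M_D^2$ and the $d(d+2)$ factor in $K_0$.

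The transport term is the hard part. By duality it reduces to an elliptic regularity estimate for $u = (1-\cK)^{-1}g$ of the form $\|\sigma^2\nabla_x^2 u\|_{L^2(\mu)} \lesssim \|g\|$ with $\nabla^2$-type corrections. To get it we apply a weighted Bochner identity: compute $\int(\cK u)^2\,\ud\mu$ by integration by parts, commute derivatives through $\sigma$, and control the curvature term with Assumption~\ref{ass:curvature}. The commutator errors are controlled by $M_D$, $M_H$, and $M_{U,\sigma}$, which is where $K_1$ arises. The qualitative condition $\sup\sigma^2|\nabla^2 U|_F<\infty$ only justifies the integrations by parts. We expect this step to give $\|\cA\Ltr(I-\Pi)f\|\le \sqrt{K_0}\,\|(I-\Pi)f\|$, up to the precise constants.

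Finally we combine the estimates. Writing $\Lambda = \lambda_\rr$, they give
\[
-\tfrac{\ud}{\ud t}\cH[\cP_t f] \ge (\Lambda - \epsilon C)\|(I-\Pi)f\|^2 + \epsilon\tfrac{m}{1+m}\|\Pi f\|^2,
\qquad C \approx 2\sqrt{K_0} + \tfrac{\Lambda}{2\sqrt m}.
\]
We choose $\epsilon$ by optimising the resulting quadratic form, which is where $(2\sqrt{K_0}+\lambda_\rr/(2\sqrt m))^2$ enters. Grönwall's inequality then gives $\cH[\cP_tf]\le e^{-\nu t}\cH[f]$, and the norm equivalence with ratio at most $3$ yields~\eqref{eq:L2decayresult}.
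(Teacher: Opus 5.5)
Your architecture is the paper's: refreshment gives $\lambda_\rr\|(I-\Pi)f\|^2$, the weighted Poincar\'e inequality enters through $\Pi\Ltr^\star\Ltr\Pi=-\cL_\sigma$, and a weighted Bochner estimate controls the second-order term. \textbf{Gap:} the operator you choose cannot produce the rate $\nu$ in the statement, and the constants you quote at the end are not the ones your $\cA$ yields.

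With the classical DMS operator $\cA=(1+(\Ltr\Pi)^*\Ltr\Pi)^{-1}(\Ltr\Pi)^*=(1-\bar\cL_\sigma)^{-1}\Pi\Ltr^\star$ you only get $\|\cA\|\le\frac12$. So the refreshment cross term is $\frac{\lambda_\rr}{2}$, not $\frac{\lambda_\rr}{2\sqrt m}$, and the macroscopic term is $\frac{m}{1+m}\|\Pi f\|^2$, not $\frac12\|\Pi f\|^2$. Write $a=\|\Pi f\|$ and $b=\|(I-\Pi)f\|$. Optimising $(\lambda_\rr-\epsilon)b^2-\epsilon C ab+\epsilon\frac{m}{1+m}a^2$ then gives a rate of order $\lambda_\rr m^2/((1+m)^2C^2)$ for large $C$. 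For small $m$ this is smaller than $\nu$ by a power of $m$ (a factor of order $m^2$ when $K_0\sim m^{-1}$). The formula $1/\nu=3(4+(2\sqrt{K_0}+\lambda_\rr/(2\sqrt m))^2)/\lambda_\rr$ therefore does not come out of your quadratic form.

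There is also a sign slip. Because the generator contains $+\Ltr$, the term $\langle\cA\Ltr\Pi f,\Pi f\rangle>0$ enters $\frac{\ud}{\ud t}\cH$ with the wrong sign. You need $\cH[f]=\frac12\|f\|^2-\epsilon\langle\cA f,f\rangle$, as in the paper's $\cV_\delta$.

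The missing ingredient is the modified choice $\cA=(m-\bar\cL_\sigma)^{-1}\Pi\Ltr^\star$ (following Fan--Li--Lu). Here $\Ltr$ is the paper's transport-plus-bounce operator, i.e.\ your $\Ltr+\cB$. With this choice the stated constants come out:
\begin{itemize}
\item The identity $m\|\cA f\|^2+\|\Ltr\cA f\|^2=\langle\Ltr\cA f,f\rangle$ gives $\|\cA\|\le\frac{1}{2\sqrt m}$ and $\|\Ltr\cA\|\le 1$.
\item The weighted Poincar\'e inequality gives $\|(m-\bar\cL_\sigma)^{-1}g\|_{L^2(\mu)}\le\frac{1}{2m}\|g\|_{L^2(\mu)}$ for $\mu(g)=0$. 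Hence $\langle(-\bar\cL_\sigma)(m-\bar\cL_\sigma)^{-1}\Pi f,\Pi f\rangle\ge\frac12\|\Pi f\|^2$.
\item The Bochner bound $\|\Ltr^\star\Ltr u\|^2\le K_0\|\bar\cL_\sigma u\|^2$, applied to $u=(m-\bar\cL_\sigma)^{-1}\Pi f$, gives the cross term $\frac32\sqrt{K_0}$.
\end{itemize}
With these bounds, $\delta_\ast$ yields $\cE_{\delta_\ast}(f)\ge\frac{\delta_\ast}{4}\|f\|^2$. The constant $4$ in $\delta_\ast^{-1}$ guarantees $\delta_\ast\le\sqrt m/2$, i.e.\ $\delta_\ast\|\cA\|\le\frac14$. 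That is where the prefactor $3$ and $\nu=\delta_\ast/3$ come from.

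Two minor remarks. First, splitting off the bounce is legitimate: the reflection identity cancels the non-product correction, and $\cA\cB=\cA\cB(I-\Pi)$ is controlled by a first-order estimate. But the resulting constant is then not $K_0$. In the paper the bounce is part of $\Ltr$, so $\Ltr^\star g=-\Ltr g$ for $v$-independent $g$. The terms $d(d+2)$ and $M_{U,\sigma}^2+2(d+12)M_D^2$ in $K_0$ come from a single pointwise bound on $|\Ltr^\star\Ltr g|^2$, not from $\int\lambda_\rb^2\,\ud\kappa_x$. Second, passing the coercivity inequality from $C^1_c$ to $\cP_t f$ needs the core statement of Theorem~\ref{thm:stationaritycore}, together with a mean-zero correction of the approximating sequence. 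Theorem~\ref{thm:Pcontraction} alone does not give this.
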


  \begin{remark}\label{rem:tails} 
  Note that a single choice of $\sigma$ can be admissible (in the sense of satisfying~\ref{ass:wpi}, \ref{ass:sigmabounds} and~\ref{ass:curvature}) for a wide class of target distributions. In particular, the choice of $\sigma$ depends only on the tail behaviour of the target distribution $\mu$ as shown below, and is invariant under perturbations of the potential $U$ that have finite oscillation. Set $x\mapsto\langle x \rangle \coloneqq \sqrt{1 + |x|^2}$ for $ x \in \R^d$. 
      
      \noindent \textbf{(I) Polynomial distribution.}
      If $ \mu(\ud x) = V(x)^{-d - r}\ud x$ for a  positive, convex function $V$ and  $r\in(0,\infty)$, then the weighted Poincar\'e inequality~\ref{ass:wpi} is satisfied for $\sigma = \langle x \rangle$ and some $m > 0$~\cite[Prop.~3.2]{cattiaux_functional_2010}.
      If $V$ is a polynomial then~\ref{ass:sigmabounds} and~\ref{ass:curvature} are also satisfied for $\sigma = \langle x \rangle$.
      Specifically, for $U(x) = (d + r)\log\langle x \rangle$ with $r > d$,~\ref{ass:wpi} is satisfied with $\sigma = \langle x \rangle$ and $m^{-1} = d + r - 2 $~\cite[Thm.~3.1]{bobkov_weighted_2009}. 

      \noindent \textbf{(II) Sub-exponential distribution.} If $ \mu(\ud x) = e^{-V(x)^p} /Z_p$ with $p \in (0, 1)$ and $V$ positive and convex, then~\ref{ass:wpi} is satisfied by $\sigma(x) = \langle x \rangle^{1-p}$ for some $m > 0$~\cite[Prop.~3.6]{cattiaux_functional_2010}.
      Specifically, for $U(x) = \langle x \rangle^p $ for a fixed $ p \in (0, 1)$, then~\ref{ass:wpi} is satisfied for $\sigma(x) = \langle x\rangle^{1- p}$ and $m^{-1}/e = 12 d/p^3 + (d + p)/p^4$ (using ~\cite[Prop.5.7]{cattiaux_functional_2010} and \textbf{(IV)} below); and in that case~\ref{ass:sigmabounds}-\ref{ass:curvature}  also hold.

      \noindent \textbf{(III) Exponential and thinner-tailed distributions (Standard BPS).} For $\sigma = 1$,~\eqref{eq:TBP} aligns with the BPS, and~\ref{ass:wpi} reduces to a (strong) Poincar\'e inequality, satisfied for exponential and thinner-tailed distributions; see~\cite{andrieu_hypocoercivity_2021,piecewise_hypocoercivity_wang_2022} for non-asymptotic exponential rates in this case.

      \noindent \textbf{(IV) Stability of $\sigma$ under finite perturbations of $\mu$.}
      An analogue of the Holley-Stroock Lemma (e.g. \cite[Prop 4.2.7]{bakry_analysis_2014}) for~\ref{ass:wpi} is as follows: if~\ref{ass:wpi} is satisfied by the weight $\sigma$ and probability distribution $\mu_1$ for some $m_1 > 0$, then it is also satisfied by the probability measure $\ud\mu_2  \coloneqq e^{U_2} \ud\mu_1$, where $U_2 : \R^d \rightarrow \R$ is a smooth potential of finite oscillation $\mathrm{osc}_{\mu_1}(U_2) \coloneqq \mathrm{ess}\sup_{\mu_1}(U_2) - \mathrm{ess}\inf_{\mu_1}(U_2) < \infty$, the same weight $\sigma$ and the weighted Poincar\'e constant $m_2 \coloneqq e^{-\mathrm{osc}_{\mu_1}(U_2)}m_1$. 
      This demonstrates that the tempering function $\sigma$ depends primarily on the tail behaviour of the target distribution $\mu$. 
  \end{remark}

  \subsubsection{Mixing Time Estimate}\label{sec:mixtime}
  Theorem~\ref{thm:maincnvg} can be translated into a mixing time estimate by using the $\chi^2$-divergence as a distance on the set of probability measures on $\R^d \times \R^d$. For the definition of the $\chi^2$-divergence, the notations $ O$ and $\tilde \Omega$, and the proof of the following corollary, see Appendix~\ref{sec:appendixchi}. 
  For $\epsilon > 0$ and an initial distribution $\mu_0$ on $\R^d \times \R^d$, define the mixing time $$t_\mathrm{mix}(\epsilon, \mu_0) \coloneqq \inf \{ t \in \R_+ : \chi^2(\mu_0\cP_s\|\mu_\sigma) \leq \epsilon   \ \forall s \in [t, \infty) \}.$$
  \begin{corollary}\label{cor:mixingtime} Let~\ref{ass:wpi}, \ref{ass:sigmabounds} and~\ref{ass:curvature} hold. Let $\nu$ be as in Theorem~\ref{thm:maincnvg}. Let $\mu_0$ be a probability measure on $\R^d \times \R^d$. Then for $\epsilon > 0$, we have  $t_\mathrm{mix}(\epsilon, \mu_0) \leq  \nu^{-1}\left[\log(3\chi^2(\mu_0 \| \mu_\sigma)/\epsilon)\right]_+$. 
  \end{corollary}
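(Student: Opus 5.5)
The corollary should follow from Theorem~\ref{thm:maincnvg} by the standard duality between the density of $\mu_0\cP_t$ with respect to $\mu_\sigma$ and the adjoint semigroup.

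First dispose of the trivial case. If $\chi^2(\mu_0\|\mu_\sigma)=\infty$, or $\mu_0$ is not absolutely continuous with respect to $\mu_\sigma$, the bound is $+\infty$ and there is nothing to prove. Otherwise write $h_0\coloneqq \ud\mu_0/\ud\mu_\sigma\in L^2(\mu_\sigma)$. Then $\chi^2(\mu_0\|\mu_\sigma)=\|h_0-1\|^2_{L^2(\mu_\sigma)}$.

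Next identify the density at time $t$. For bounded $g$ we have
\[
\mu_0\cP_t(g)=\int h_0\,\cP_t g\,\ud\mu_\sigma=\int \cP_t^* h_0\, g\,\ud\mu_\sigma ,
\]
where $\cP_t^*$ is the $L^2(\mu_\sigma)$-adjoint. By Theorem~\ref{thm:Pcontraction}, $\cP_t^*$ is also a contraction semigroup, and it fixes constants because $\mu_\sigma$ is invariant. So $\ud(\mu_0\cP_t)/\ud\mu_\sigma=\cP_t^*h_0$, and
\[
\chi^2(\mu_0\cP_t\|\mu_\sigma)=\|\cP_t^*(h_0-1)\|^2_{L^2(\mu_\sigma)}.
\]

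The main obstacle is that Theorem~\ref{thm:maincnvg} is stated for $\cP_t$, not for $\cP_t^*$. I would handle this in one of two ways.
\begin{itemize}
\item \textbf{Direct argument.} $\cP_t^*$ is the semigroup of the time-reversed T-BPS, which has the same structure with $v\mapsto -v$. Bounces and refreshments are invariant under the velocity flip $S$, and $\mu_\sigma$ is $S$-invariant. This suggests $\cP_t^*=S\cP_tS$, which requires checking that the generator $L$ satisfies $L^*=SLS$. This relation is the standard property of BPS-type generators: transport is antisymmetric and flip-conjugate, while the jump parts are symmetric. With it, $\|\cP_t^*f\|=\|\cP_t Sf\|$, and $Sf$ is still centred.
\item \textbf{Abstract argument.} Use $\|\cP_t^*|_{L^2_0}\|=\|\cP_t|_{L^2_0}\|$, since adjoints have equal operator norms and $L^2_0(\mu_\sigma)$ is invariant under both operators.
\end{itemize}
Either way we get
\[
\chi^2(\mu_0\cP_s\|\mu_\sigma)\le 3e^{-\nu s}\chi^2(\mu_0\|\mu_\sigma)\qquad\text{for all } s\ge0.
\]

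Finally, the right-hand side is nonincreasing in $s$. It is at most $\epsilon$ as soon as $s\ge \nu^{-1}\log(3\chi^2(\mu_0\|\mu_\sigma)/\epsilon)$, and for all $s\ge0$ if this logarithm is negative. Taking the positive part gives the claimed bound on $t_{\mathrm{mix}}(\epsilon,\mu_0)$.
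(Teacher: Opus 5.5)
Your proof is correct, and your abstract argument is essentially the paper's. The paper writes $\chi^2(\mu_0\cP_t\|\mu_\sigma)$ through the variational representation of the $L^2(\mu_\sigma)$-norm and applies Cauchy--Schwarz; this amounts to bounding $\|\cP_t^*(h_0-1)\|_{L^2(\mu_\sigma)}$ by the operator norm of $\cP_t$ on $L^2_0(\mu_\sigma)$ and then invoking Theorem~\ref{thm:maincnvg}, so your velocity-flip route $\cP_t^*=S\cP_tS$ is valid but not needed.
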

   The following is a direct consequence of the previous corollary (and Remark~\ref{rem:tails}\textbf{(I)}-\textbf{(II)}).
    \begin{corollary}[Scaling of mixing times under a feasible start]
        Let $\{(\mu_0^{(d)},\cB(\R^d\times\R^d))\}_{d \in {\N\setminus \{0\}}}$ and $\{(\mu^{(d)},\cB(\R^d))\}_{d \in \N\setminus \{0\}}$ be sequences of probability measures. Also, let $\{\sigma_d : \R^d \rightarrow [1, \infty)\}$ be a sequence of smooth maps. For $d \in \N\setminus \{0\}$,
        define the joint target distribution 
        on $\R^d \times \R^d$ by
        $\mu_\sigma^{(d)} (\ud x, \ud v) \coloneqq  {(2\pi \sigma_d^2(x))^{-d/2}} \exp(-|v|^2/(2\sigma_d^2(x))) \,\ud v\, \ud\mu^{(d)}( x )$.\\
        (i) Assume that $\log \chi^2(\mu_0^{(d)} \| \mu_\sigma^{(d)}) =  O(d)$. \\
        (ii) Assume that $(\mu^{(d)}, \sigma_d)$ satisfies~\ref{ass:wpi},~\ref{ass:sigmabounds} and~\ref{ass:curvature} with $m = \tilde \Omega(d^{-l})$ (for some $l \in \R_+$), $M_D = O(1)$, $M_H = O(d^{1/2})$, $M_{U, \sigma} = O(d)$ and $K = O(d)$.\\
        Then we have that $t_\mathrm{mix}(1, \mu_0^{(d)}) =  O(d^{l+ 5})$.
        
        In particular, for $\mu^{(d)}(\ud x) \propto \langle x \rangle^{-d - r_d} \ud x$ for $r_d > d$ or $\mu^{(d)}(\ud x) \propto \exp( -\langle x \rangle^{p}) \,\ud x$ for $p \in (0, 1)$ with the tempering $\sigma_d = \langle x\rangle$ or $\langle x \rangle^{1 - p}$ respectively, assumption (ii) is satisfied for $l = 1$. Then, for initial distributions $\{\mu_0^{(d)}\}_{d \in \N\setminus \{0\}}$ satisfying assumption (i), we have that $t_\mathrm{mix}(1, \mu_0^{(d)}) =  O(d^{6})$. 
    \end{corollary}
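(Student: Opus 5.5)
The plan is to combine Corollary~\ref{cor:mixingtime} with an explicit tracking of how $\nu$ depends on $d$ under (ii). By Corollary~\ref{cor:mixingtime} with $\epsilon=1$,
\[
t_\mathrm{mix}(1,\mu_0^{(d)})\le \nu^{-1}\bigl[\log 3+\log\chi^2(\mu_0^{(d)}\|\mu_\sigma^{(d)})\bigr]_+ ,
\]
and assumption (i) makes the bracket $O(d)$. It therefore suffices to show $1/\nu=O(d^{l+4})$ for a fixed refresh rate $\lambda_\rr$ (independent of $d$), after absorbing logarithmic factors hidden in $\tilde\Omega$.

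Next I bound the constants in~\eqref{eq:defconstants}. With $M_D=O(1)$, $M_H=O(d^{1/2})$, $M_{U,\sigma}=O(d)$ and $K=O(d)$, each term of $K_1$ is $O(d)$: $M_{U,\sigma}M_D=O(d)$, $(2+\sqrt d)M_H=O(d)$, $(2+2(\sqrt d+2)^2)M_D^2=O(d)$, and $K=O(d)$. The term $4(M_{U,\sigma}^2+2(d+12)M_D^2)$ is $O(d^2)$ and dominates. Since $1/m=O(d^l)$ up to logarithms, we get $K_0=O(d^2\cdot d^{2+l})=O(d^{l+4})$.

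Then $1/\nu=3(4+(2\sqrt{K_0}+\lambda_\rr/(2\sqrt m))^2)/\lambda_\rr \lesssim K_0+\lambda_\rr^2/m$. This is $O(d^{l+4})+O(d^l)=O(d^{l+4})$, so $t_\mathrm{mix}(1,\mu_0^{(d)})=O(d^{l+5})$.

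For the examples, I verify (ii) with $l=1$ using Remark~\ref{rem:tails}.

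In the polynomial case, $m^{-1}=d+r_d-2$. This is $O(d)$ only if $r_d=O(d)$, which I would take as implicit, e.g.\ $d<r_d\le Cd$. The derivatives give:
\begin{itemize}
\item $\sigma=\langle x\rangle$ has $|\nabla\sigma|\le1$ and $\sigma|\nabla^2\sigma|_F\le\sqrt d$;
\item $\sigma|\nabla U|=(d+r_d)|x|/\langle x\rangle\le d+r_d=O(d)$;
\item $\sigma^2\nabla^2U=(d+r_d)(I-2xx^T/\langle x\rangle^2)\succeq-(d+r_d)I$, so $K=O(d)$.
\end{itemize}

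In the stretched-exponential case, $m^{-1}=e(12d/p^3+(d+p)/p^4)=O(d)$ for fixed $p$. With $\sigma=\langle x\rangle^{1-p}$:
\begin{itemize}
\item $|\nabla\sigma|\le 1-p$;
\item $\sigma|\nabla^2\sigma|_F=O(\sqrt d)$, from the Hessian $\langle x\rangle^{-2p}[(1-p)I-(1-p)(1+p)xx^T/\langle x\rangle^2]$;
\item $\sigma|\nabla U|=p|x|\langle x\rangle^{-1}\le p$;
\item $\sigma^2\nabla^2U=p\,[I-(2-p)xx^T/\langle x\rangle^2]\succeq -pI$.
\end{itemize}
All of these are well within the required orders, and the general bound then gives $O(d^6)$.

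The main obstacle is purely bookkeeping: identifying the dominant term $M_{U,\sigma}^2/m$ inside $K_0$ and confirming that the prefactor $d(d+2)$ does not push the order beyond $d^{l+4}$. One also needs to state the implicit assumption $r_d=O(d)$ in the polynomial case.
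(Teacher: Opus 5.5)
Your proof is correct and takes essentially the paper's route. The paper also treats this statement as a direct consequence of Corollary~\ref{cor:mixingtime} with $\epsilon=1$: for fixed $\lambda_\rr$, (ii) gives $K_0=O(d^{2}\cdot d^{2+l})$ and hence $1/\nu=O(d^{l+4})$, and the two examples are checked via Remark~\ref{rem:tails}\textbf{(I)}--\textbf{(II)}. Your caveat that the polynomial example needs $r_d=O(d)$ is justified, since the smallest admissible $M_{U,\sigma}$ and $K$ there both equal $d+r_d$; two small corrections: under the paper's definition of $\tilde\Omega$ in Appendix~\ref{sec:appendixchi} no logarithmic factors are hidden, so $1/m=O(d^l)$ holds exactly (with hidden logs you would only get $\tilde O(d^{l+5})$), and in the stretched-exponential case $\sigma^2\nabla_x^2U=p\langle x\rangle^{-p}\bigl[I_d-(2-p)xx^T/\langle x\rangle^2\bigr]$, which still yields $K\le p$.
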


    \begin{remark}
   The assumption
$\log \chi^2\bigl(\mu_0^{(d)} \,\Vert\, \mu_\sigma^{(d)}\bigr)=O(d)$
is commonly referred to as a \textit{feasible start} condition~\cite{andrieu_weak_2026}. It reflects initialisation from a distribution that is easy to sample from and whose density ratio relative to the target is suitably controlled, for example by a bound growing at most exponentially with \(d\); see~\cite{andrieu_weak_2026,dwivediLogConcavesampling}. This scaling arises naturally for product measures. Indeed, if \(\pi_0\) and \(\pi_1\) are probability measures on \(\mathbb{R}\) such that
$0<\chi^2(\pi_0\Vert\pi_1)<\infty$,
then
$\chi^2\bigl(\pi_0^{\otimes d}\Vert\pi_1^{\otimes d}\bigr)
=
\bigl(1+\chi^2(\pi_0\Vert\pi_1)\bigr)^d-1$.
Consequently,
$\log \chi^2\bigl(\pi_0^{\otimes d}\Vert\pi_1^{\otimes d}\bigr)
=
d\log\bigl(1+\chi^2(\pi_0\Vert\pi_1)\bigr)+O(1)
=
O(d)$.
\end{remark}

\subsection{Main contributions of the paper and a comparison with the literature}  

\subsubsection{Main contributions}
The following summarise the main contributions of this paper,  including some key results and observations that lead to the proof of Theorem~\ref{thm:maincnvg}.

\smallskip
\noindent\textbf{(I) Non-asymptotic convergence rates and mixing times for heavy-tailed targets.}
In Theorem~\ref{thm:maincnvg}, we use hypocoercivity to establish non-asymptotic exponential convergence for the class of PDMPs in~\eqref{eq:TBP} with heavy-tailed invariant distributions. In Section~\ref{sec:hypocoercivity}, we adapt the hypocoercivity framework of~\cite{dolbeault_hypocoercivity_2010}, together with its recently introduced optimal modification~\cite{fan_sharp_2026}, to the tempered bouncy particle sampler~\eqref{eq:TBP}. Our argument uses a modified \(L^2\)-Lyapunov functional involving the resolvent of the generator of the overdamped tempered Langevin dynamics~\eqref{eq:templangevin}, introduced in Section~\ref{sec:defgen}. The key observation is that the generator of~\eqref{eq:TBP} is a \textit{second-order lift}~\cite[Def.~1]{eberle_non-reversible_2026} of this Langevin generator. The analysis is primarily functional-analytic and relies on the weighted Poincar\'e inequality in Assumption~\ref{ass:wpi}, whose weight determines the appropriate choice of the tempering function \(\sigma\). The resulting exponential \(L^2\)-decay in Theorem~\ref{thm:maincnvg} yields mixing-time bounds that scale polynomially with the dimension \(d\) for sub-exponential and polynomially-tailed targets, including from non-warm starts
(recall that the mixing-time bounds for the Random Walk Metropolis are exponential in dimension $d$ for heavy-tailed targets with non-warm start~\cite{andrieu_weak_2026}).
\smallskip

\noindent\textbf{(II) Non-explosion and robustness.}
By construction, the process in~\eqref{eq:TBP} is non-explosive without additional assumptions; see Lemma~\ref{lem:explosiveness}. This contrasts with PDMPs that temper the velocity according to the current position, whose deterministic flows may explode in finite time~\cite{g_vasdekis_speed_2023}. Moreover, the weighted Poincar\'e inequality in Assumption~\ref{ass:wpi} requires the tempering function \(\sigma\) to depend only on the tail behaviour of the potential \(U\); see Remark~\ref{rem:tails}.
\smallskip

\noindent\textbf{(III) Technical contributions.}
Theorem~\ref{thm:stationaritycore}, which identifies a core for the generator of the Markov semigroup associated with~\eqref{eq:TBP}, provides the link between the stochastic process $(X,V)$ and the functional-analytic hypocoercivity result in Theorem~\ref{thm:coercivity}. Lemma~\ref{lem:TLD2} establishes analogous generator properties for the Markov semigroup of the overdamped tempered Langevin diffusion~\eqref{eq:templangevin}. These properties appear to be new and are essential to the proof of Theorem~\ref{thm:coercivity}.
\smallskip

\noindent\textbf{(IV) Weighted  Bochner inequality.}
Bochner's identity (e.g.,~\cite[Ch.~3]{bakry_analysis_2014}) is a fundamental tool in hypocoercivity~\cite{cao_explicit_2023,andrieu_hypocoercivity_2021,dolbeault_hypocoercivity_2010,eberle_convergence_2025,fan_sharp_2026}. We extend it to the weighted setting associated with the tempered Langevin dynamics~\eqref{eq:templangevin}: Lemma~\ref{lem:weightedbochner} establishes a weighted Bochner identity incorporating the tempering function \(\sigma\) and the tempered Langevin generator, from which we derive the weighted Bochner inequality in Lemma~\ref{lem:bochnerineq}. We believe that this framework lays the groundwork for obtaining non-asymptotic exponential convergence rates for a new class of hypoelliptic ergodic processes; see Section~\ref{sec:conclusions}(B).
\smallskip
 
 \subsubsection{Comparison with previous work} We now discuss related work, focusing mainly on processes and algorithms for sampling from heavy-tailed distributions.
 \smallskip

\noindent\textit{(i) Classical algorithms for heavy-tailed targets:}
For heavy-tailed targets, the standard BPS has been shown to converge only subgeometrically~\cite{christophe_andrieu_subgeometric_2021, roberts_polynomial_2023}, while the mixing times of classical algorithms such as Random Walk Metropolis (RWM) can scale as \(\exp(O(d))\) from non-warm starts~\cite{andrieu_weak_2026}. As noted in \textbf{(I)}, the convergence rates and mixing-time bounds for~\eqref{eq:TBP} therefore substantially improve upon those available for classical algorithms.
\smallskip

\noindent\textit{(ii) Tempered variants of classical algorithms:}
The Speed Up Zig-Zag sampler (SUZZ)~\cite{g_vasdekis_speed_2023} is a PDMP with exponential convergence to heavy-tailed targets, but relies on a speeding-up ODE flow. In contrast,~\eqref{eq:TBP} has piecewise-constant velocity and therefore avoids solving such an ODE, simplifying a potential implementation of the algorithm, while remaining non-explosive, even in regimes where SUZZ and~\eqref{eq:templangevin} are explosive; see \textbf{(II)}. This is achieved by stochastically accelerating the velocity at the jump times of the Poisson process \((N_t^\rr)_{t\in\R_+}\), when it is resampled from the position-dependent refresh distribution \(\kappa_x\). The use of \(\kappa_x\) is reminiscent of Random Walk Metropolis with position-dependent covariance~\cite{livingstone_geometric_2021}, although~\eqref{eq:TBP} is gradient-based.
\smallskip

\noindent\textit{(iii) Space transformations for thinning or eliminating tails:}
In~\cite{leif_t_johnson_variable_2012}, isotropic transformations map the target to a distribution with superexponential tails, for which RWM is geometrically ergodic. This approach is applied in~\cite{deligiannidis_exponential_2019} to enable BPS to sample from a transformation of a heavy-tailed distribution. Such methods crucially assume that the tails behave similarly in all directions, whereas our tempering function \(\sigma\) need not be radially symmetric. Moreover, unlike the approaches in~\cite{leif_t_johnson_variable_2012,deligiannidis_exponential_2019,g_vasdekis_speed_2023}, we establish non-asymptotic exponential convergence rates, made possible by the \textit{second-order lift} observation discussed in \textbf{(I)}. A better-behaved transformation and convergence guarantees for ULA applied to transformed isotropic heavy-tailed targets are developed in~\cite{he2022transformedULA}. More recently, an elimination-of-the-tails method~\cite{bresar2026diffeomorphicmarkovchainmonte} introduced a class of state-space transformations that map heavy-tailed distributions to distributions with compact, convex support, ensuring uniform ergodicity and typically providing polynomial in $d$ mixing-time estimates.
\smallskip

\subsection{Structure of the remainder of the paper}
Section~\ref{sec:hypocoercivity} develops our hypocoercivity framework based on a modified \(L^2\)-Lyapunov functional, outlines the proof strategy, and states the technical results needed for Theorem~\ref{thm:maincnvg}. Section~\ref{sec:invariance} shows that the semigroup \(\cP\) associated with~\eqref{eq:TBP} is a strongly continuous contraction on \(L^2(\mu_\sigma)\), defines its generator on this space, and identifies a core. Section~\ref{sec:proofs} proves the results stated in Section~\ref{sec:hypocoercivity}. Within it, Subsection~\ref{subsec:Sturcture_of_proof_of_main_thm} describes the structure of the argument and the dependencies among the technical results; see Figures~\ref{fig:lemmadeps} and~\ref{fig:auxdeps}. Subsection~\ref{sec:proofdecay} then combines the core from Section~\ref{sec:invariance} with the coercivity estimate proved in Subsection~\ref{sec:proofcoercivity} to complete the proof of Theorem~\ref{thm:maincnvg}. Finally, Section~\ref{sec:conclusions} presents open problems and directions for future research.

\section{Hypocoercivity for heavy tails via the modified $L^2$-framework} \label{sec:hypocoercivity}

We start by describing the strategy of the proof of Theorem~\ref{thm:maincnvg}, inspired by the hypocoercivity structure introduced in~\cite{dolbeault_hypocoercivity_2010}; see also Figure~\ref{fig:lemmadeps} in Section~\ref{sec:proofs} below.\\
\textbf{(a)} The hypocoercivity argument begins with the construction of the Lyapunov functional \(\cV_\delta\), defined in~\eqref{eq:defcV}, which is a perturbation of the squared \(L^2(\mu_\sigma)\)-norm. It involves the operator \(\cA\), defined in~\eqref{eq:defcA}, through the resolvent of the \(L^2(\mu)\)-generator \((\bar{\cL}_\sigma,\operatorname{Dom}(\bar{\cL}_\sigma))\) of the tempered Langevin dynamics~\eqref{eq:templangevin}. Lemma~\ref{lem:TLD2} establishes the properties of this generator and its resolvent required in the subsequent steps.\\
\textbf{(b)} We introduce the operator
$\cL=\Ltr+\lambda_\rr\Lv$,
$\Lv=\Pi-I$,
and show that it coincides with the \(L^2(\mu_\sigma)\)-generator $(\bar{\cL},\operatorname{Dom}(\bar{\cL}))$ of the semigroup  \(\cP\) of~\eqref{eq:TBP}, defined in Section~\ref{sec:invariance}, on sufficiently regular functions. Here, \(\Ltr\) describes the deterministic transport and bounce mechanisms, while \(\Pi\) averages over the state-dependent velocity distribution \(\kappa_x\). Lemma~\ref{lem:lifts} records the algebraic identities among \(\Ltr\), \(\Ltr^\star\), \(\Lv\), and \(\Pi\). In particular, \(\Ltr^\star\) is the \(L^2(\mu_\sigma)\)-adjoint of \(\Ltr\) on an appropriate subspace, and
$\Pi\Ltr^\star\Ltr\Pi=-\cL_\sigma\Pi$,
so that averaging the second-order transport--bounce operator over the velocity recovers the tempered Langevin generator $\cL_\sigma$ of diffusion~\eqref{eq:templangevin}.\\
\textbf{(c)} Using identities in Lemma~\ref{lem:lifts}, we establish the two operator bounds that control the perturbation terms in the Lyapunov functional. In Lemma~\ref{lem:Afbound} we show that $\cA$ and $\Ltr \cA$ extend to bounded operators on $L^2(\mu_\sigma)$, and in Lemma~\ref{lem:3.2} we bound $\|\Ltr^\star \Ltr g\|_{L^2(\mu_\sigma)}$ in terms of $\|\bar \cL_\sigma g\|_{L^2(\mu)}$, which is where the constant $K_0$ of Theorem~\ref{thm:maincnvg} enters. \\
\textbf{(d)} We combine these bounds into the dissipation functional $\cE_\delta$ associated with the Lyapunov functional $\cV_\delta$. In Theorem~\ref{thm:coercivity} we show that, for the specific choice $\delta = \delta_\ast$, the functional $\cE_{\delta_\ast}$ is \textit{coercive}: $\cE_{\delta_\ast}$ is bounded below by a multiple of $\cV_{\delta_\ast}$ and  $\cV_{\delta_\ast}$ is equivalent to the $L^2(\mu_\sigma)$-norm. A Gr\"onwall argument along the semigroup $(\cP_t)_{t \in \R_+}$ then converts this coercivity into the exponential decay of $\cV_{\delta_\ast}(\cP_t f )$, implying  Theorem~\ref{thm:maincnvg} with the rate $\nu$ determined by $\delta_\ast$. This last step also crucially uses the identification of a core in Theorem~\ref{thm:stationaritycore} of the $L^2(\mu_\sigma)$-generator of the semigroup \(\cP\) of~\eqref{eq:TBP}.

\subsection{Tempered Langevin Dynamics}\label{sec:defgen} 
Consider the stochastic differential equation (SDE)
\begin{equation}
    Y_t = Y_0 + \int_0^t \left( 2 \sigma(Y_s) \nabla_x \sigma(Y_s) - \sigma^2(Y_s) \nabla_x U (Y_s)\right) \ud s + \sqrt 2 \int_0^t \sigma(Y_s) \,\ud B_s , \label{eq:templangevin}\tag{TLD}
\end{equation}
where $(B_t)_{t \in \R_+}$ is a standard $d$-dimensional Brownian motion in $\R^d$.
\begin{lemma}\label{lem:TLD1}
    Let~\ref{ass:sigmabounds} hold. Then \eqref{eq:templangevin} has a unique (non-explosive) solution $Y = (Y_t)_{t \in \R_+}$ which is a strong Markov process. Also, $\mu$ is an invariant distribution for the semigroup of $Y$.
\end{lemma}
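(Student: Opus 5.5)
The plan has three parts: local Lipschitz coefficients for uniqueness, a Lyapunov function for non-explosion, and the generator's symmetry in $L^2(\mu)$ for invariance.

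\textbf{Existence and uniqueness.} Write the drift as $b \coloneqq 2\sigma\nabla_x\sigma - \sigma^2\nabla_x U$ and the diffusion coefficient as $\sqrt2\,\sigma I_d$. Since $\sigma$ is smooth and $U$ is at least $C^2$ (Assumption~\ref{ass:sigmabounds} involves $\nabla_x^2 U$), both coefficients are locally Lipschitz. Standard theory then gives a unique strong solution up to an explosion time $\zeta$, and the solution is strong Markov on $[0,\zeta)$.

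\textbf{Non-explosion.} Assumption~\ref{ass:sigmabounds} gives linear growth directly. Because $\sigma$ is Lipschitz with constant $M_D$, we have $\sigma(x)\le \sigma(0)+M_D|x|$. Then
\begin{equation*}
|b(x)| \le 2M_D\,\sigma(x) + \sigma(x)\cdot \sigma(x)|\nabla_x U(x)| \le (2M_D+M_{U,\sigma})\bigl(\sigma(0)+M_D|x|\bigr),
\end{equation*}
and $|\sqrt2\,\sigma(x)|\le \sqrt2\bigl(\sigma(0)+M_D|x|\bigr)$. Take the Lyapunov function $W(x)=1+|x|^2$ and the generator $\cL_\sigma g \coloneqq \sigma^2\Delta g + \langle b,\nabla g\rangle$. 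The growth bounds give $\cL_\sigma W\le C W$. A stopping-time argument with $\tau_n=\inf\{t:|Y_t|\ge n\}$ then gives $\E[W(Y_{t\wedge\tau_n})]\le W(Y_0)e^{Ct}$, hence $\tau_n\to\infty$ almost surely. Thus $\zeta=\infty$, and the strong Markov property holds globally; alternatively, one can cite the classical result for locally Lipschitz coefficients with linear growth.

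\textbf{Invariance of $\mu$.} A direct computation shows that $\cL_\sigma$ is in divergence form:
\begin{equation*}
e^{U}\nabla\cdot\bigl(\sigma^2 e^{-U}\nabla g\bigr)=\sigma^2\Delta g+\langle 2\sigma\nabla\sigma-\sigma^2\nabla U,\nabla g\rangle=\cL_\sigma g.
\end{equation*}
Hence $\int \cL_\sigma g\,\ud\mu=0$ for all $g\in C_c^\infty(\R^d)$, and more strongly $\int f\,\cL_\sigma g\,\ud\mu=-\int\sigma^2\langle\nabla f,\nabla g\rangle\,\ud\mu$, so $\cL_\sigma$ is symmetric in $L^2(\mu)$.

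\textbf{Main obstacle.} The main obstacle is passing from infinitesimal invariance to invariance of the semigroup. I would use Echeverría's theorem (Ethier--Kurtz, Thm.~4.9.17), or equivalently uniqueness of the martingale problem. Well-posedness follows from the strong uniqueness above via Yamada--Watanabe. Non-explosion shows the martingale problem for $(\cL_\sigma,C_c^\infty)$ has solutions on path space without killing. With $\int\cL_\sigma g\,\ud\mu=0$ for all $g\in C_c^\infty$, Echeverría's theorem produces a stationary solution with marginal $\mu$. By uniqueness this solution coincides in law with $Y$ started from $\mu$, so $\mu P_t=\mu$.

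A more hands-on alternative also works. Let $P_t^{(n)}$ be the semigroup killed on exiting the ball $B_n$, where the coefficients are smooth and bounded. Symmetry of $\cL_\sigma$ gives $\int P_t^{(n)}g\,\ud\mu\le\int g\,\ud\mu$ for $g\ge0$. Letting $n\to\infty$, non-explosion and Fatou's lemma give sub-invariance $\mu P_t\le\mu$. Since $P_t1=1$ (conservativeness), this forces equality.
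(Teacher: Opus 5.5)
Your proof is correct, but it takes a different route from the paper at both steps.

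\textbf{Well-posedness.} The paper observes that Assumption~\ref{ass:sigmabounds} makes the coefficients of~\eqref{eq:templangevin} \emph{globally} Lipschitz. The Jacobian of the drift $2\sigma\nabla_x\sigma-\sigma^2\nabla_x U$ is bounded in Frobenius norm by $2M_D^2+2M_H+2M_{U,\sigma}M_D+\sup_x\sigma^2|\nabla_x^2U|_F$, and $\sqrt2\,\sigma$ is $\sqrt2 M_D$-Lipschitz. Existence, uniqueness, non-explosion and the strong Markov property then follow at once from the Lipschitz SDE theorem~\cite[Ch.~5, Thm.~6]{protter_stochastic_2004}, with no Lyapunov or localisation argument. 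Your local-Lipschitz-plus-linear-growth route is longer. In exchange, it uses only $M_D$, $M_{U,\sigma}$ and smoothness, so it shows that $M_H$ and the Hessian bound on $U$ are not needed for this lemma.

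\textbf{Invariance.} The paper only checks that $e^{-U}/Z_U$ solves the stationary Fokker--Planck equation, i.e.\ that $\int\cL_\sigma g\,\ud\mu=0$ for $g\in C_c^\infty(\R^d)$, and cites Pavliotis. It leaves implicit the passage from this infinitesimal identity to $\mu\cT_t=\mu$. You correctly identify that passage as the real issue and supply it, either via Echeverr\'{\i}a's theorem plus uniqueness of the martingale problem, or via sub-invariance of the killed semigroups plus conservativeness. On this point your argument is more complete than the paper's. In the killing variant, the one step to justify is that the probabilistic semigroup killed on leaving $B_n$ coincides with the $L^2(\mu|_{B_n})$-self-adjoint Dirichlet realisation of $\cL_\sigma$. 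That identification is standard for smooth, uniformly elliptic coefficients on a ball, since $\sigma\geq 1$.
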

Well-posedness in the above lemma follows directly from~\cite[Ch.5 Thm~6]{protter_stochastic_2004} as the coefficients of the SDE~\eqref{eq:templangevin} are globally Lipschitz as a consequence of~\ref{ass:sigmabounds}. The invariance of $\mu$ follows by checking that the density $e^{-U}/Z_U$ solves the stationary Fokker-Planck equation corresponding to the diffusion~\eqref{eq:templangevin} (see, e.g.~\cite[Sec.~2.5]{pavliotis_stochastic_2014}).

For $t \in \R_+$, set $\cT_t g(y)\coloneqq \E_y[g(Y_t)]$ for $g \in C_b(\R^d)$ and $y \in \R^d$. This is uniquely extended to a strongly continuous contraction semigroup $\cT = (\cT_t)_{t \in \R_+}$ on the Banach space $L^2(\mu)$.
Let $(\bar\cL_\sigma, \mathrm{Dom}(\bar\cL_\sigma))$ denote the corresponding generator on $(L^2(\mu), \|\cdot\|_{L^2(\mu)})$. I.e.,
 set $\mathrm{Dom}(\bar \cL_\sigma) \coloneqq \{ g \in L^2(\mu) : \exists h \in L^2(\mu) \text{ such that } \lim_{t \rightarrow 0}\| (\cT_t g - g)/ t - h \|_{L^2(\mu)} = 0 \}$. For $g \in \mathrm{Dom}(\bar \cL_\sigma)$, $\bar\cL_\sigma g \coloneqq \lim_{t \rightarrow 0 }(\cT_t g - g)/t$ (in $L^2(\mu)$).

 Recall the constant $m > 0$ for which Assumption~\ref{ass:wpi} holds.
By~\cite[Ch.1 Prop.~2.1]{ethier_markov_1986}, the resolvent $(m - \bar \cL_{\sigma})^{-1} : L^2(\mu) \rightarrow \mathrm{Dom}(\bar \cL_\sigma)$ is a bounded linear operator.

For $g \in C^2(\R^d)$, define the operator $\cL_\sigma$ by
$\cL_\sigma g \coloneqq - \nabla_x^\star \sigma^2 \nabla_x g $, where   
   $\nabla_x^\star F(x) \coloneqq - \mathrm{div}_x F(x) + \langle \nabla_x U(x) , F(x) \rangle$ and
 $\mathrm{div}_x F (x) \coloneqq \mathrm{Trace}((\nabla_x F)(x))$, for a differentiable vector field $F: \R^d \rightarrow \R^d$ ($\nabla_x F$ is the matrix of gradients of the components  of $F$). Let $C_c^n(\R^d)$ denote the subset of $C^n(\R^d)$ consisting of functions with compact support. We also let $L_0^2(\mu) \coloneqq \{ f \in L^2(\mu) : \mu(f) = 0\}$.
 \begin{remark}
 \label{rem:ibp}
 An integration-by-parts  implies
   that for a differentiable vector field $F : \R^d \rightarrow \R^d$ and a compactly supported function $g \in C_c^1(\R^d)$, we have  $\int_{\R^d}\langle F, \nabla_x g \rangle \ud \mu = \langle \nabla_x^\star F, g \rangle_{L^2(\mu)}$.
 \end{remark}
The proof of Lemma~\ref{lem:TLD2} is in Section~\ref{sec:proofTLD2}. (Core in Lemma~\ref{lem:TLD2} is as defined in~\cite[p.~17]{ethier_markov_1986}.)

\begin{lemma}\label{lem:TLD2} Let~\ref{ass:sigmabounds} hold. Then we have the following.\\
    (I) For $g \in C_b^2(\R^d) \cap \mathrm{Dom}(\bar \cL_\sigma)$, we have
    $\bar \cL_\sigma g = \cL_\sigma g$  $\mu$-a.e.  Moreover, $C_c^2(\R^d) \subseteq \mathrm{Dom}(\bar \cL_\sigma)$ and $C_c^\infty(\R^d)$ is a core of $(\bar \cL_\sigma, \mathrm{Dom}(\bar \cL_\sigma))$.\\
    (II) If $f,g \in C_b^2(\R^d) \cap \mathrm{Dom}(\bar \cL_\sigma)$ then  $\langle f, (-\bar \cL_\sigma) g \rangle_{L^2(\mu)} = \langle \nabla_x f, \sigma^2 \nabla_x g \rangle_{L^2(\mu)} = \langle (-\bar \cL_\sigma  )f, g \rangle_{L^2(\mu)}$. Moreover, if~\ref{ass:wpi} also holds then $\langle \nabla_x g, \sigma^2 \nabla_x g \rangle_{L^2(\mu)} \leq m^{-1}\|\bar \cL_\sigma g\|_{L^2(\mu)}^2 $.\\
    (III) For $g \in C_b^{1}(\R^d)$ we have $(m - \bar \cL_\sigma)^{-1} g \in C_b^{2} (\R^d) \cap \mathrm{Dom}(\bar \cL_\sigma)$. Moreover, if~\ref{ass:wpi} holds, then the inequality $\|(m - \bar\cL_\sigma)^{-1} g \|_{L^2(\mu)} \leq (2m)^{-1} \| g\|_{L^2(\mu)}$ holds for all $g \in L_0^2(\mu) \cap C^1_b(\R^d)$.
\end{lemma}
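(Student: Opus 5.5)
Part (I) is handled with It\^o's formula and the Lipschitz property of the coefficients. For $g\in C^2(\R^d)$, applying It\^o's formula to \eqref{eq:templangevin} gives the pointwise generator
\[
\sigma^2\Delta g + \langle 2\sigma\nabla_x\sigma - \sigma^2\nabla_x U,\nabla_x g\rangle = \mathrm{div}_x(\sigma^2\nabla_x g) - \langle \nabla_x U,\sigma^2\nabla_x g\rangle = \cL_\sigma g .
\]
Take $g\in C_c^2(\R^d)$. Then $\cL_\sigma g$ is bounded and continuous with compact support, so $(\cT_tg-g)/t=t^{-1}\int_0^t\cT_s\cL_\sigma g\,\ud s$. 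This converges boundedly and pointwise to $\cL_\sigma g$, and dominated convergence gives convergence in $L^2(\mu)$. Hence $C_c^2\subseteq\mathrm{Dom}(\bar\cL_\sigma)$.

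For $g\in C_b^2\cap\mathrm{Dom}(\bar\cL_\sigma)$, first check that $\cL_\sigma g$ is bounded; this holds because $\sigma^2|\nabla^2 g|$ is not a priori bounded. I would instead argue weakly. For $\phi\in C_c^\infty$, use the symmetry of $\cT_t$ in $L^2(\mu)$, which follows from reversibility: the generator is in divergence form with respect to $\mu$. This gives $\langle\bar\cL_\sigma g,\phi\rangle=\lim_{t\to0}\langle g,(\cT_t\phi-\phi)/t\rangle=\langle g,\cL_\sigma\phi\rangle$. Integrating by parts twice via Remark~\ref{rem:ibp} turns the right side into $\langle\cL_\sigma g,\phi\rangle$, so $\bar\cL_\sigma g=\cL_\sigma g$ $\mu$-a.e.

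For the core property I would use Ethier--Kurtz Ch.~1 Prop.~3.1: it suffices that $C_c^\infty$ is dense and that $(m-\bar\cL_\sigma)(C_c^\infty)$ is dense in $L^2(\mu)$. Suppose $h\in L^2(\mu)$ is orthogonal to $(m-\cL_\sigma)\phi$ for all $\phi\in C_c^\infty$. Then $h$ is a weak solution of $(m-\cL_\sigma)h=0$, and by elliptic regularity ($\sigma\ge1$, smooth coefficients) $h$ is smooth. Next test against $\chi_R^2h$, with cutoffs $\chi_R$ whose gradients satisfy $\sigma|\nabla\chi_R|\lesssim 1$ on the annulus $R\le|x|\le2R$; this is possible because $\sigma$ has linear growth. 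The resulting energy estimate, combined with Cauchy--Schwarz, yields $m\|h\|^2\le \limsup_R C\|h\ind_{|x|\ge R}\|^2=0$. This essential self-adjointness step is the main technical obstacle.

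Part (II) follows from (I), the approximation of $f,g\in C_b^2\cap\mathrm{Dom}$ by $C_c^\infty$ functions in the graph norm (the core property), and Remark~\ref{rem:ibp}. The last inequality comes from taking $\tilde g=g-\mu(g)$, which leaves $\nabla g$ and $\bar\cL_\sigma g$ unchanged. Then
\[
\int\sigma^2|\nabla g|^2\,\ud\mu=\langle \tilde g,-\bar\cL_\sigma g\rangle\le\|\tilde g\|\,\|\bar\cL_\sigma g\|\le m^{-1/2}\Bigl(\int\sigma^2|\nabla g|^2\,\ud\mu\Bigr)^{1/2}\|\bar\cL_\sigma g\|
\]
by \ref{ass:wpi}; squaring finishes the proof.

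Part (III) proceeds via $u=(m-\bar\cL_\sigma)^{-1}g=\int_0^\infty e^{-mt}\cT_tg\,\ud t$. Regularity, $u\in C_b^2$, follows from differentiating the flow of \eqref{eq:templangevin} with respect to the initial condition. Since $g\in C_b^1$ and the coefficients are smooth and Lipschitz by \ref{ass:sigmabounds}, we get $|\nabla\cT_tg|\le e^{Lt}\|\nabla g\|_\infty$. This is integrable against $e^{-mt}$ only if $m>L$, so I would instead rely on Schauder interior estimates for the elliptic equation $mu-\cL_\sigma u=g$ with uniformly bounded right side, rescaled locally by $\sigma$. The estimate in $L^2$ is spectral. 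By \ref{ass:wpi} together with (II), $-\bar\cL_\sigma\ge m$ on $L_0^2(\mu)$, which is invariant under the resolvent. For $u\in L_0^2$ this gives $\langle(m-\bar\cL_\sigma)u,u\rangle\ge2m\|u\|^2$, and therefore $\|u\|\le(2m)^{-1}\|g\|$.
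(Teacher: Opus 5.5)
Your argument is correct in outline but takes a different route from the paper in every part except the Dynkin step for $C_c^2(\R^d)\subseteq\mathrm{Dom}(\bar\cL_\sigma)$ and the two final inequalities, which match.

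\textbf{How the paper proceeds.}
\begin{itemize}
\item It identifies $\bar\cL_\sigma g=\cL_\sigma g$ pointwise. It\^o's formula gives $(\cT_tg-g)/t\to\cL_\sigma g$ everywhere, and an a.e.-convergent subsequence of the $L^2(\mu)$-convergent difference quotients identifies the limit.
\item For the core, it cites Cerrai for $\cT_t(C_b^\infty)\subseteq C_b^\infty$ and \cite[Ch.~1, Prop.~3.3]{ethier_markov_1986}, then cuts off with $\phi_n g$ (Lemma~\ref{lem:compactapprox}).
\item Part (II) is a cutoff integration by parts, with Lemma~\ref{lem:dirichletfinite} supplying $\sigma\nabla_x g\in L^2(\mu)$.
\item In (III), it takes a classical $C_b^2$ solution of $m\varphi-\cL_\sigma\varphi=g$ from Cerrai. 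It shows $\varphi\in\mathrm{Dom}(\bar\cL_\sigma)$ by cutoffs and closedness, and matches $\varphi$ to the resolvent by injectivity.
\end{itemize}

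\textbf{How your route differs.} You prove essential self-adjointness of $(\cL_\sigma,C_c^\infty(\R^d))$ directly, via hypoellipticity plus the $\chi_R^2h$ energy estimate. This is correct, since linear growth of $\sigma$ keeps $\sigma|\nabla\chi_R|$ bounded. Everything else then follows from it. This makes the core step self-contained and gives $\mu$-symmetry of $\cT$ as a by-product. The price is that elliptic regularity theory replaces Cerrai's semigroup results. In (III) you go from the $L^2$ resolvent to regularity, rather than from a classical solution into the domain.

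\textbf{Points to tighten.}
\begin{itemize}
\item \emph{Order of the symmetry argument.} Divergence form only gives symmetry of $\cL_\sigma$ on $C_c^\infty(\R^d)$. Symmetry of $\bar\cL_\sigma$, and hence of $\cT_t$, is a consequence of your core result, not of divergence form alone. Since the core argument uses only the Dynkin step, prove it first. Then $\langle\bar\cL_\sigma g,\phi\rangle_{L^2(\mu)}=\langle g,\cL_\sigma\phi\rangle_{L^2(\mu)}$ follows by graph-norm approximation of $g$, and there is no circularity.
\item \emph{Dirichlet energy in (II).} The middle expression needs $\sigma\nabla_x f,\ \sigma\nabla_x g\in L^2(\mu)$. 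It also needs $\sigma\nabla_x\psi_n\to\sigma\nabla_x g$ in $L^2(\mu)$ for the core approximants $\psi_n$. This follows from $\int\sigma^2|\nabla_x(\psi_n-\psi_k)|^2\,\ud\mu=\langle\psi_n-\psi_k,-\cL_\sigma(\psi_n-\psi_k)\rangle_{L^2(\mu)}$ together with identifying the limit in $L^2_{\mathrm{loc}}(\ud x)$. It has to be stated; it is what Lemma~\ref{lem:dirichletfinite} provides in the paper.
\item \emph{Regularity in (III).}
  \begin{itemize}
  \item That $u$ weakly solves $mu-\cL_\sigma u=g$ again uses the symmetry from the first point.
  \item You need $\|u\|_\infty\le\|g\|_\infty/m$, which follows from the Laplace-transform representation.
  \item A merely bounded right-hand side gives only $C^{1,\alpha}$ bounds, so the Lipschitz bound on $g$ must enter. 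Work on balls $B(x_0,r)$ with $rM_D\le 1/2$ and divide the equation by $\sigma(x_0)^2$. Under~\ref{ass:sigmabounds} all coefficients then have $C^{0,1}$ bounds uniform in $x_0$, so interior Schauder gives $\nabla_x^2u$ bounded.
  \item In your $\sigma$-rescaled version, the H\"older seminorm of $g$ grows like $\sigma(x_0)^\alpha$ but is beaten by the factor $\sigma(x_0)^{-2}$.
  \end{itemize}
\item \emph{Typo.} In ``this holds because $\sigma^2|\nabla^2 g|$ is not a priori bounded'', ``holds'' should be ``fails''.
\end{itemize}
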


\begin{remark}
     By Assumption~\ref{ass:wpi}, the process $Y$ converges at an exponential rate $m$ to its stationary distribution $\mu$ in the sense of the decay of the $L^2(\mu)$-norm of its semigroup applied to $\mu$-mean-zero functions.
\end{remark}

\subsection{Construction of the modified $L^2$ Lyapunov functional and coercivity}
We need the following operators and function spaces to represent the infinitesimal generator of the PDMP given by equation~\eqref{eq:TBP}.
For $f \in C^1(\R^d \times \R^d) $, define 
\begin{equation}
    \begin{aligned}
        \Ltr f(x, v)  &\coloneqq \langle  v,\nabla_x f(x, v) \rangle + \langle \nabla_x H (x, v), v \rangle_+\left[ f\left(x, R_{\nabla_x H(x, v)} v \right) - f(x, v)\right], \\
        \Ltr^\star f (x, v) &\coloneqq -\langle  v,\nabla_x f(x, v) \rangle + \langle -\nabla_x H (x, v), v \rangle_+\left[ f\left(x, R_{\nabla_x H(x, v)} v \right) - f(x, v)\right]. 
    \end{aligned}
    \label{eq:Ltradjoint}
\end{equation}
In Lemma~\ref{lem:lifts} below, we will see that $\Ltr^\star $ is the $L^2(\mu_\sigma)$-adjoint of $\Ltr$ when restricted to an appropriate function subspace.

Define the function spaces
\begin{equation}
    \begin{aligned}
        W_\mathrm{tr} &\coloneqq \{ f \in C^1(\R^d \times \R^d) :\|f\|_{L^2(\mu_\sigma)}+ \|\Ltr f\|_{L^2(\mu_\sigma)} < \infty\},\\
        W_{\mathrm{tr}^\star} &\coloneqq \{ f \in C^1(\R^d \times \R^d) : \|f\|_{L^2(\mu_\sigma)} + \|\Ltr^\star f\|_{L^2(\mu_\sigma)} < \infty\}.
    \end{aligned}
    \label{eq:defauxspaces}
\end{equation}
\begin{remark}\label{rem:sigmaintB}
    We have to work with the subspaces $W_\mathrm{tr}$ and $W_{\mathrm{tr}^\star}$ defined in~\eqref{eq:defauxspaces} as we do not assume that $\sigma \in L^2(\mu)$. Indeed, if $\sigma \in L^2(\mu)$, then the class of bounded functions with bounded continuous derivatives $C^1_b(\R^d \times \R^d )$ is a sufficiently rich class for our purposes contained in $W_\mathrm{tr} \cap W_{\mathrm{tr}^\star}$.
\end{remark}
Define the following function spaces
\begin{equation}
    \begin{aligned}
        C_\Pi &\coloneqq \{ f \in C(\R^d \times \R^d) \, : \,  \int_{ \R^d} |f(x, w )| \, \ud \kappa_x(w) < \infty \text{ for all } x \in \R^d\},\\
        C_\Pi^n &\coloneqq \{ f \in C_\Pi : \Pi f \in C^n(\R^d)\} \quad \text{ for } n \in \N,
    \end{aligned} \label{eq:auxspacesB}
\end{equation}
where $\Pi f$ (in the definition of $C_\Pi^n$) is given in~\eqref{eq:Lv}.
For $f \in  C_\Pi$, set
\begin{equation}
\begin{aligned}
    \Lv f(x, v) \coloneqq \left[\Pi -I\right]f(x, v),\quad\text{where $\Pi f(x, v) \coloneqq \int_{\R^d} f(x, w) \ \kappa_x (\ud w)$.}
\end{aligned}
     \label{eq:Lv}
\end{equation}
(Note that $\Pi f $ does not depend on $v$ and $\Pi(L^2(\mu_\sigma))\subset  L^2(\mu)$.)
For any $f \in C_\Pi \cap C^1(\R^d \times \R^d)$, set
\begin{equation}
    \cL f \coloneqq \Ltr f + \lambda_\rr \Lv f. \label{eq:cL}
\end{equation}

The following result,  proved in Section~\ref{sec:prooflifts}, contains algebraic properties of the operators  in~\eqref{eq:Ltradjoint} and~\eqref{eq:Lv}, essential in the proof of our main result, Theorem~\ref{thm:maincnvg}. 

\begin{lemma}\label{lem:lifts} Recall $\cL_\sigma g = - \nabla_x^\star \sigma^2 \nabla_x g $ for any $g\in C^2(\R^d)$. The following equalities hold:
    \begin{align}
    \Pi\, \Ltr \Pi\, b  = 0 \quad \text{and}\quad \Pi\, \Ltr^\star \Pi\, b  = 0\quad&\text{for any  $b \in C^1_\Pi$;} \label{eq:L1} \tag{L1}\\
    \Pi\, \Ltr^\star \Ltr \Pi f  = -\cL_\sigma \Pi f, \quad &\text{for any } f \in C_\Pi^2;\label{eq:L2}\tag{L2}\\
    \langle \Ltr^\star h, g \rangle_{L^2(\mu_\sigma)} = \langle h, \Ltr g \rangle_{L^2(\mu_\sigma )} , \quad &\text{for any } h \in W_{\mathrm{tr}^\star} \text{ and } g \in W_\mathrm{tr}. \label{eq:L3} \tag{L3}
\end{align}
\end{lemma}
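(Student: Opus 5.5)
We verify the three identities separately, using two structural facts throughout. Write $u \coloneqq \nabla_x H(x,v)$ as in~\eqref{eq:nablaH}. First, $u$ depends on $v$ only through $|v|$. Since $|R_u v| = |v|$, this gives $\nabla_x H(x,R_u v) = u$, so $v\mapsto R_{\nabla_x H(x,v)}v$ is an involution and $\langle u, R_u v\rangle = -\langle u,v\rangle$. Second, for fixed $x$ the Gaussian $\kappa_x$ is rotation invariant, and $H(x,\cdot)$ is invariant under this involution. Finally, $\Pi b$ is a function $g(x)$ of position only, so the bounce term of $\Ltr$ and of $\Ltr^\star$ vanishes on it, and $\Ltr g = \langle v,\nabla_x g\rangle = -\Ltr^\star g$.

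\emph{Proof of~\eqref{eq:L1}.} With $g=\Pi b\in C^1(\R^d)$ we get $\Pi\Ltr\Pi b(x) = \int\langle w,\nabla_x g(x)\rangle\,\kappa_x(\ud w) = 0$, because $\kappa_x$ is symmetric under $w\mapsto -w$. The same argument gives $\Pi\Ltr^\star\Pi b=0$.

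\emph{Proof of~\eqref{eq:L2}.} Let $g=\Pi f\in C^2(\R^d)$ and $\varphi(x,v)=\langle v,\nabla_x g(x)\rangle$. A direct computation gives
\[
\Ltr^\star\varphi = -v^T\nabla_x^2 g\, v + 2\langle u,v\rangle^2\,\ind_{\{\langle u,v\rangle<0\}}\,\frac{\langle u,\nabla_x g\rangle}{|u|^2}.
\]
We integrate against $\kappa_x$ in three steps.
\begin{itemize}
\item The map $v\mapsto -v$ preserves $\kappa_x$ and $u$, so it removes the indicator and the factor $2$.
\item We condition on $|v|=r$; then $v$ is uniform on the sphere of radius $r$ while $u=u(r)$ is fixed. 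The identities $\E[\langle u,v\rangle^2\mid r]=r^2|u|^2/d$ and $\E[\langle u,v\rangle\langle v,a\rangle\mid r] = r^2\langle u,a\rangle/d$ turn the bounce contribution into $\int\langle u,w\rangle\langle w,\nabla_x g\rangle\,\kappa_x(\ud w)$.
\item We insert~\eqref{eq:nablaH} and use the Gaussian moments $\int ww^T\,\ud\kappa_x=\sigma^2 I_d$ and $\int |w|^2ww^T\,\ud\kappa_x=(d+2)\sigma^4 I_d$.
\end{itemize}
This yields
\[
\Pi\Ltr^\star\Ltr\Pi f = -\sigma^2\Delta g+\sigma^2\langle\nabla_x U,\nabla_x g\rangle + \sigma^{-1}\bigl(d\sigma^2-(d+2)\sigma^2\bigr)\langle\nabla_x\sigma,\nabla_x g\rangle.
\]
The right-hand side equals $-\mathrm{div}_x(\sigma^2\nabla_x g)+\langle\nabla_x U,\sigma^2\nabla_x g\rangle = \nabla_x^\star\sigma^2\nabla_x g = -\cL_\sigma g$.

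\emph{Proof of~\eqref{eq:L3}.} This is the main obstacle, since $h,g$ are only $C^1$ with $h,g,\Ltr^\star h,\Ltr g\in L^2(\mu_\sigma)$; in particular the transport and bounce parts need not be separately integrable. We localise with $\chi_n(x,v)\coloneqq\phi(x/n)\psi(|v|/n)$, where $\phi,\psi$ are smooth cutoffs equal to $1$ near the origin. The cutoff $\chi_n$ is invariant under the involution $v\mapsto R_uv$.

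On the compact support of $\chi_n$ all terms are integrable, so we may treat the pieces separately.
\begin{itemize}
\item Transport part. Integrating by parts in $x$ against $e^{-H}$ gives
\[
\int\chi_n\bigl(\langle v,\nabla_x h\rangle g + h\langle v,\nabla_x g\rangle\bigr)\,\ud\mu_\sigma = \int\chi_n\langle v,\nabla_x H\rangle hg\,\ud\mu_\sigma - \int\langle v,\nabla_x\chi_n\rangle hg\,\ud\mu_\sigma.
\]
\item Bounce part. For fixed $x$ we change variables $v\mapsto R_uv$; this map preserves Lebesgue measure, $H$ and $\chi_n$, and sends $\langle -u,v\rangle_+$ to $\langle u,v\rangle_+$. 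After this substitution the two bounce contributions cancel against the term $\langle u,v\rangle hg = (\langle u,v\rangle_+-\langle -u,v\rangle_+)hg$.
\end{itemize}
Combining these, we obtain
\[
\int\chi_n\bigl(\Ltr^\star h\,g - h\,\Ltr g\bigr)\,\ud\mu_\sigma = -\int\langle v,\nabla_x\chi_n\rangle hg\,\ud\mu_\sigma .
\]

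It remains to let $n\to\infty$. The integrand $\langle v,\nabla_x\chi_n\rangle hg$ is bounded by $(2n/n)\|\nabla\phi\|_\infty|hg|\,\ind_{\{|x|\ge n\}}$, since $|v|\le 2n$ on the support of $\chi_n$ and $\nabla_x\chi_n$ vanishes for $|x|<n$. Because $hg\in L^1(\mu_\sigma)$, dominated convergence sends the right-hand side to $0$. The same argument, using $\Ltr^\star h\,g,\ h\,\Ltr g\in L^1(\mu_\sigma)$, sends the left-hand side to $\langle\Ltr^\star h,g\rangle_{L^2(\mu_\sigma)} - \langle h,\Ltr g\rangle_{L^2(\mu_\sigma)}$, which proves~\eqref{eq:L3}.
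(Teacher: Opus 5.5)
Your proposal is correct and essentially follows the paper's proof. The one difference is in how \eqref{eq:L3} is organised.

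For \eqref{eq:L1} and \eqref{eq:L2} you do the same Gaussian computation. Your symmetrisation $v\mapsto -v$, followed by $\E[\langle u,v\rangle^2\mid |v|=r]\,\langle u,a\rangle/|u|^2=\E[\langle u,v\rangle\langle v,a\rangle\mid |v|=r]$, repackages the paper's $\E[\langle e_1,Z_{\Sph}\rangle_+^2]=1/(2d)$ together with the radial moments $\int r^2\,\ud\chi_d(r)=d$ and $\int r^4\,\ud\chi_d(r)=d(d+2)$.

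For \eqref{eq:L3} the paper works in two steps:
\begin{itemize}
\item it first proves the identity for $g\in C^1_c(\R^d\times\R^d)$, where the integration by parts produces no boundary term;
\item it then extends to $g\in W_{\mathrm{tr}}$ via the graph-norm density result of Lemma~\ref{lem:trdensity}.
\end{itemize}
That lemma uses the same cutoff $\phi(|x|/R)\phi(|v|/R)$ and the same bound $|\langle v,\nabla_x\phi_R\rangle|\le 2\|\phi'\|_\infty\ind_{\{|x|\ge R\}}$ as you do. You merge the two steps into one localised identity. This makes your argument self-contained and avoids a separate density lemma. The paper's split instead isolates the clean compact-support identity.

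One harmless sign slip: the boundary term should be $+\int\langle v,\nabla_x\chi_n\rangle hg\,\ud\mu_\sigma$, not $-$. Your own transport identity, multiplied by $-1$ and added to the bounce contribution $\int\chi_n\langle u,v\rangle hg\,\ud\mu_\sigma$, gives
$\int\chi_n(\Ltr^\star h\,g-h\,\Ltr g)\,\ud\mu_\sigma=+\int\langle v,\nabla_x\chi_n\rangle hg\,\ud\mu_\sigma$.
This matches the paper's $\Ltr(\phi_R g)=\phi_R\Ltr g+g\langle v,\nabla_x\phi_R\rangle$, and it does not affect the limit $n\to\infty$.
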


\paragraph{\textbf{Notational convention:}} for ease of notation, functions of the $x$-variable are trivially identified with functions of $(x, v) \in \R^d \times \R^d$ that are constant in the $v$-variable.

The estimate in Lemma~\ref{lem:3.2}, proved in Section~\ref{sec:proof3.2} using the weighted Bochner inequality (see Lemma~\ref{lem:bochnerineq}), is crucial in the proofs of both Lemma~\ref{lem:Afbound} and Theorem~\ref{thm:coercivity}. Note that the role of Lemma~\ref{lem:3.2} in the proof of Lemma~\ref{lem:Afbound} is of an asymptotic nature (i.e., Lemma~\ref{lem:Afbound} only uses the finiteness of the constant $K_0$  in Lemma~\ref{lem:3.2}, but not its size). In contrast, via Theorem~\ref{thm:coercivity}, Lemma~\ref{lem:3.2} contributes the constant $K_0$ directly to the exponential convergence rate in Theorem~\ref{thm:maincnvg}.

\begin{lemma}\label{lem:3.2} Let~\ref{ass:wpi}, \ref{ass:sigmabounds} and \ref{ass:curvature} hold. Let $K_0$ be as in~\eqref{eq:defconstants}. Then 
    \begin{equation}\|\Ltr^\star \Ltr g \|^2_{L^2(\mu_\sigma)} \leq K_0 \|\bar \cL_\sigma g\|^2_{L^2(\mu)}\qquad\text{for $g \in C^{2}_b(\R^d) \cap \mathrm{Dom}(\bar \cL_\sigma)$.} \label{eq:lemma3.2}\end{equation}
\end{lemma}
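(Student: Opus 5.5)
Throughout, $g$ is a function of $x$ only, so $\Ltr g = \langle v, \nabla_x g\rangle$: the bounce term vanishes because $g$ does not depend on $v$. Hence $\Ltr g = \langle v,\nabla_x g\rangle$ is linear in $v$. Apply $\Ltr^\star$ to $f(x,v)=\langle v,\nabla_x g(x)\rangle$:
\begin{equation*}
\Ltr^\star \Ltr g = -\langle v, \nabla_x^2 g\, v\rangle + \langle -\nabla_x H, v\rangle_+\bigl[\langle R_{\nabla_x H} v - v, \nabla_x g\rangle\bigr].
\end{equation*}
With $u=\nabla_x H$ we have $R_u v - v = -2|u|^{-2}\langle v,u\rangle u$, so the jump term equals $2\langle -u,v\rangle_+\langle v,u\rangle\langle u,\nabla_x g\rangle/|u|^2$. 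Its modulus is at most $2|\langle v,u\rangle|\,|\langle u,\nabla_x g\rangle|/|u|\le 2|\langle v,u\rangle|\,|\nabla_x g|$.

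This gives the pointwise bound
\begin{equation*}
|\Ltr^\star\Ltr g|^2 \le 2|\langle v,\nabla^2_x g\,v\rangle|^2 + 8\langle v,\nabla_x H\rangle^2|\nabla_x g|^2 .
\end{equation*}
We then integrate in $v$ against $\kappa_x$, which has covariance $\sigma^2 I_d$, using Gaussian moments. For the first term, $\int \langle v, A v\rangle^2\,\ud\kappa_x = \sigma^4\bigl((\mathrm{Tr}A)^2+2|A|_F^2\bigr)\le (d+2)\sigma^4|A|_F^2$ with $A=\nabla_x^2 g$. For the second term, substitute~\eqref{eq:nablaH}, split $\nabla_x H$ into $\nabla_x U$ plus the $\sigma$-gradient term, and use moments up to order four of $|v|^2/\sigma^2$. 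This yields a bound
\begin{equation*}
\int\langle v,\nabla_x H\rangle^2\,\ud\kappa_x \lesssim \sigma^2|\nabla_x U|^2 + (d+c)M_D^2\le M_{U,\sigma}^2 + (d+c)M_D^2
\end{equation*}
with an explicit numerical constant $c$, obtained from~\ref{ass:sigmabounds}; the factor $(d+12)$ in $K_0$ presumably arises here. Integrating over $\mu$, we get
\begin{equation*}
\|\Ltr^\star\Ltr g\|^2_{L^2(\mu_\sigma)} \le 2(d+2)\int \sigma^4|\nabla_x^2 g|_F^2\,\ud\mu + 8\bigl(M_{U,\sigma}^2+(d+c)M_D^2\bigr)\int|\nabla_x g|^2\,\ud\mu .
\end{equation*}
Since $\sigma\ge 1$, the last integral is bounded by $\int\sigma^2|\nabla_x g|^2\,\ud\mu$. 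By Lemma~\ref{lem:TLD2}(II), this is in turn at most $m^{-1}\|\bar\cL_\sigma g\|^2_{L^2(\mu)}$.

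The main obstacle is controlling $\int\sigma^4|\nabla^2_x g|_F^2\,\ud\mu$ by $\|\cL_\sigma g\|^2$. I would do this with the weighted Bochner identity of Lemma~\ref{lem:bochnerineq}. The plan is to integrate $\Gamma_2$ for the operator $\cL_\sigma=\sigma^2\Delta+\langle 2\sigma\nabla\sigma-\sigma^2\nabla U,\nabla\rangle$, giving
\begin{equation*}
\int(\cL_\sigma g)^2\,\ud\mu=\int\Gamma_2(g)\,\ud\mu .
\end{equation*}
The integrand $\Gamma_2(g)$ contains the leading term $|\sigma^2\nabla^2 g|_F^2$, a curvature term $\sigma^2\langle\nabla g,\sigma^2\nabla^2U\,\nabla g\rangle\ge -K\sigma^2|\nabla g|^2$ by~\ref{ass:curvature}, and error terms from the derivatives of $\sigma$. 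These errors are of the types
\begin{equation*}
\sigma^3|\nabla\sigma|\,|\nabla^2 g|_F|\nabla g|,\qquad \sigma^3|\nabla^2\sigma|_F|\nabla g|^2,\qquad \sigma^2|\nabla\sigma|^2|\nabla g|^2,\qquad \sigma^3|\nabla U||\nabla\sigma||\nabla g|^2 .
\end{equation*}
Using $M_D$, $M_H$ and $M_{U,\sigma}$, each reduces to $\sigma^2|\nabla g|^2$, or to a cross term absorbed via Young's inequality into $\tfrac12|\sigma^2\nabla^2 g|_F^2$. This should give
\begin{equation*}
\int\sigma^4|\nabla^2 g|_F^2\,\ud\mu\le 2\bigl(\|\cL_\sigma g\|^2_{L^2(\mu)}+K_1\int\sigma^2|\nabla g|^2\,\ud\mu\bigr).
\end{equation*}
Applying Lemma~\ref{lem:TLD2}(II) once more to the last integral and collecting constants reproduces the structure $4d(d+2)[1+(K_1+\dots)/m]$ of $K_0$. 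The delicate points are, first, justifying the integrations by parts for $g\in C^2_b\cap\mathrm{Dom}(\bar\cL_\sigma)$, where a cutoff argument is needed, possibly using the core property in Lemma~\ref{lem:TLD2}(I). The second is finiteness of $\int\sigma^4|\nabla^2g|^2_F\,\ud\mu$ a priori, where I would truncate and pass to the limit; the assumption $\sup\sigma^2|\nabla^2U|_F<\infty$ is presumably used there. The third is tracking the $\sqrt d$ factors arising from $|\nabla\sigma|\,|\Delta g|\le\sqrt d\,|\nabla\sigma|\,|\nabla^2g|_F$, which give the $(\sqrt d+2)$ terms in $K_1$.
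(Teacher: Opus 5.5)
Your strategy is the paper's: the explicit formula for $\Ltr^\star\Ltr g$, Gaussian moments in $v$, Lemma~\ref{lem:bochnerineq} for the Hessian term and Lemma~\ref{lem:TLD2}(II) for the gradient term. However, your estimate of the bounce term is false, so that step has to be redone.

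\textbf{The bounce-term bound.} With $u=\nabla_x H(x,v)\neq 0$ and $\hat u=u/|u|$, the term equals $2\langle -\hat u,v\rangle_+^2\,\langle u,\nabla_x g\rangle$. Its modulus is therefore at most $2|\langle \hat u,v\rangle|\,|\langle u,v\rangle|\,|\nabla_x g|$, and you dropped the factor $|\langle\hat u,v\rangle|$. A quick check: the term is quadratic in $v$, while your bound is linear. Since $\nabla_x H$ depends on $v$ only through $|v|$, you may take $v=-t\hat u$ with $\nabla_x g\parallel \hat u$. Then the term equals $2t^2|u||\nabla_x g|$, which exceeds your bound $2t|u||\nabla_x g|$ for $t>1$. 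The correct pointwise estimate is
\begin{equation*}
|\Ltr^\star\Ltr g|^2\le 2\langle v,\nabla_x^2 g\,v\rangle^2+8|v|^2\langle v,\nabla_x H\rangle^2|\nabla_x g|^2 .
\end{equation*}
The extra $|v|^2$ yields an extra factor $\sigma^2$ after integrating against $\kappa_x$. For $Z\sim\cN(0,I_d)$ one has $\E[|Z|^2Z_1^2]=d+2$ and $\E[|Z|^2(d-|Z|^2)^2Z_1^2]=\tfrac1d\E[|Z|^4(d-|Z|^2)^2]=2(d+2)(d+12)$. Hence
\begin{equation*}
\int_{\R^d}|v|^2\langle v,\nabla_x H\rangle^2\,\kappa_x(\ud v)\le 2(d+2)\,\sigma^2\bigl(\sigma^2|\nabla_x U|^2+2(d+12)|\nabla_x\sigma|^2\bigr)\le 2(d+2)\,\sigma^2\bigl(M_{U,\sigma}^2+2(d+12)M_D^2\bigr).
\end{equation*}

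\textbf{Consequences of the fix.} The gradient term becomes $16(d+2)\bigl(M_{U,\sigma}^2+2(d+12)M_D^2\bigr)\int\sigma^2|\nabla_x g|^2\,\ud\mu$ directly.
\begin{itemize}
\item The $\sigma^2$ weight is genuinely needed. Your appeal to $\sigma\ge 1$ only masked the lost factor, and your version used the hypothesis $\sigma|\nabla_x U|\le M_{U,\sigma}$ with a factor $\sigma^2$ to spare.
\item The factor $(d+12)$ you anticipated appears exactly here. Your erroneous computation would have produced $2(d+4)$ instead.
\item With this fix the argument closes. In fact, you keep $\langle v,\nabla_x H\rangle^2$ rather than the paper's $|v|^2|\nabla_x H|^2$, and you use $\E\langle Z,AZ\rangle^2\le (d+2)|A|_F^2$ rather than $|v|^4|A|_F^2$. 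The corrected argument therefore gives the inequality with $K_0/d$ in place of $K_0$.
\end{itemize}

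\textbf{The approximation step.} You only list the difficulties here, and two of your guesses are off. No a priori finiteness of $\int\sigma^4|\nabla_x^2 g|_F^2\,\ud\mu$ is needed, and the hypothesis $\sup\sigma^2|\nabla_x^2U|_F<\infty$ is not used in this proof. The paper proceeds as follows:
\begin{itemize}
\item It takes $g_n\in C^3_c(\R^d)$ converging to $g$ in the graph norm of $\bar\cL_\sigma$, using the core property of Lemma~\ref{lem:TLD2}(I).
\item It applies Lemma~\ref{lem:bochnerineq} to $g_n-g_k$, so $\nabla_x^2 g_n$ is Cauchy in $L^2(\mu)$.
\item It identifies the limit with $\nabla_x^2 g$ by testing against $C_c^\infty$ functions, using that $\mu$ and Lebesgue measure are comparable on compacts.
\item It concludes with Fatou's lemma along an a.e.\ convergent subsequence.
\end{itemize}
Your cutoff idea also works and is slightly simpler. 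Using the cutoffs $\phi_n$ of Section~\ref{sec:proofTLD2}, $\phi_n g\to g$ in graph norm by Lemma~\ref{lem:compactapprox}, and $\nabla_x^2(\phi_n g)=\nabla_x^2 g$ on $\{|x|<n\}$, so Fatou applies at once. You must first extend Lemma~\ref{lem:bochnerineq} from $C^3_c$ to $C^2_c$ by mollification. This is harmless: both sides involve at most second derivatives, and the supports stay in a fixed compact set.
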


Inspired by~\cite{fan_sharp_2026}, for 
$f \in C^2_c(\R^d \times \R^d)$
we define
 \begin{equation}
     \cA f := (m - \bar \cL_\sigma)^{-1} \Pi \, \Ltr^\star f. \label{eq:defcA}
 \end{equation}
 We will be using the operator $\cA$ to construct the modified $L^2$ Lyapunov functional. Hence, the key estimates, which follow from Lemma~\ref{lem:3.2} (see Section~\ref{sec:proofwtr} below), are needed.

\begin{lemma}\label{lem:Afbound} Let~\ref{ass:wpi}, \ref{ass:sigmabounds},~\ref{ass:curvature} hold.
    For $f \in C^2_c(\R^d \times \R^d)$, the following inequalities hold:
    \begin{align}
        \|\cA f\|_{L^2(\mu_\sigma)} &\leq \frac{1}{2\sqrt m } \|f\|_{L^2(\mu_\sigma)}, \label{eq:AfboundA}\\
         \|\Ltr\cA f\|_{L^2(\mu_\sigma)} &\leq  \|f\|_{L^2(\mu_\sigma)}. \label{eq:AfboundB}
    \end{align}
    Thus $\cA$ and $\Ltr \cA$ extend uniquely to bounded operators on $L^2(\mu_\sigma)$ with norms bounded by $1/(2\sqrt{m})$ and $1$, 
    respectively.
\end{lemma}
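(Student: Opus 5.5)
Both bounds follow the Dolbeault--Mouhot--Schmeiser argument as sharpened in \cite{fan_sharp_2026}, now built on the lift identities of Lemma~\ref{lem:lifts} and the resolvent facts of Lemma~\ref{lem:TLD2}. Fix $f\in C^2_c(\R^d\times\R^d)$, set $h\coloneqq\Pi\Ltr^\star f\in L^2(\mu)$ and $g\coloneqq\cA f=(m-\bar\cL_\sigma)^{-1}h$, so that $mg-\bar\cL_\sigma g=h$. By \eqref{eq:L1}-type reasoning (integrating $\Ltr^\star f$ against $1$ via \eqref{eq:L3}, since $\Ltr 1=0$) we have $\mu(h)=0$, so $\mu(g)=0$. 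I first want $g\in C^2_b\cap\mathrm{Dom}(\bar\cL_\sigma)$. This would follow from Lemma~\ref{lem:TLD2}(III) if $h\in C^1_b$. Since $f$ is compactly supported in $x$ and smooth, $h(x)=\int\Ltr^\star f(x,w)\kappa_x(\ud w)$ is $C^1$ with compact support, hence in $C^1_b$. Otherwise I would argue by density and the core property.

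\textbf{Bound \eqref{eq:AfboundA}.} I pair the resolvent equation with $g$ and use Lemma~\ref{lem:TLD2}(II):
\[
m\|g\|^2_{L^2(\mu)}+\langle\nabla_x g,\sigma^2\nabla_x g\rangle_{L^2(\mu)}=\langle h,g\rangle_{L^2(\mu)}=\langle \Ltr^\star f,g\rangle_{L^2(\mu_\sigma)}=\langle f,\Ltr g\rangle_{L^2(\mu_\sigma)}.
\]
The middle equality holds because $g$ depends only on $x$ and $\Pi$ is the $\mu_\sigma$-orthogonal projection onto $x$-functions. The last equality is \eqref{eq:L3}, which needs $g\in W_{\mathrm{tr}}$ and $f\in W_{\mathrm{tr}^\star}$; this is checked from compact support of $f$ and finiteness of $\|\Ltr g\|$ below. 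For $g$ depending only on $x$, $\Ltr g=\langle v,\nabla_x g\rangle$ because reflections do not move $x$. Hence
\[
\|\Ltr g\|^2_{L^2(\mu_\sigma)}=\int|\nabla_x g|^2\sigma^2\,\ud\mu,
\]
using $\int v v^T\kappa_x(\ud v)=\sigma^2 I_d$. Cauchy--Schwarz then gives $mA^2+B^2\le\|f\|B$, where $A=\|g\|$ and $B=\|\sigma\nabla g\|$. Maximising over $B$ yields $mA^2\le\|f\|^2/4$, i.e.\ $\|g\|\le\|f\|/(2\sqrt m)$, which is \eqref{eq:AfboundA}. The same inequality gives $B\le\|f\|$, i.e.\ $\|\Ltr\cA f\|=\|\Ltr g\|=B\le\|f\|$, which is \eqref{eq:AfboundB}.

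So both estimates come from the single energy identity. This seems to use Lemma~\ref{lem:3.2} only to justify finiteness and regularity, for instance $g\in W_{\mathrm{tr}}$ and that $\Ltr^\star\Ltr g\in L^2$, which is consistent with the paper's remark that its role is asymptotic. The unique bounded extensions then follow because $C^2_c$ is dense in $L^2(\mu_\sigma)$.

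The main obstacle is the justification step rather than the inequalities. One must check that $h\in C^1_b(\R^d)$ and that $\langle h,g\rangle_{L^2(\mu)}=\langle f,\Ltr g\rangle_{L^2(\mu_\sigma)}$ legitimately applies \eqref{eq:L3}. The latter requires $g\in W_{\mathrm{tr}}$ even though $\sigma$ need not be in $L^2(\mu)$. I would get $\|\Ltr g\|<\infty$ from Lemma~\ref{lem:TLD2}(II), since $\int\sigma^2|\nabla g|^2\,\ud\mu\le m^{-1}\|\bar\cL_\sigma g\|^2<\infty$. Alternatively, one can avoid \eqref{eq:L3} and directly integrate by parts in $x$, using that $f$ has compact support and the bounce terms cancel under $\kappa_x$, by the argument behind \eqref{eq:L1}.
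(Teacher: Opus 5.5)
Your proof is correct, and its skeleton matches the paper's. Both reach the energy identity $m\|\cA f\|^2_{L^2(\mu_\sigma)}+\|\Ltr\cA f\|^2_{L^2(\mu_\sigma)}=\langle \Ltr\cA f,f\rangle_{L^2(\mu_\sigma)}$ and conclude by Cauchy--Schwarz. You differ in how you identify $\|\Ltr\cA f\|^2_{L^2(\mu_\sigma)}$ with the Dirichlet form of $\cA f$.

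The paper applies \eqref{eq:L3} to the pair $(\Ltr\cA f,\cA f)$ and then the lift identity \eqref{eq:L2}, obtaining $\langle\cA f,(-\cL_\sigma)\cA f\rangle_{L^2(\mu)}$. This requires $\Ltr\cA f\in W_{\mathrm{tr}^\star}$, and in particular $\Ltr^\star\Ltr\cA f\in L^2(\mu_\sigma)$. That is supplied by Lemma~\ref{lem:3.2}, which is where \ref{ass:wpi} and \ref{ass:curvature} enter.

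You instead compute $\|\Ltr g\|^2_{L^2(\mu_\sigma)}=\int\sigma^2|\nabla_x g|^2\,\ud\mu$ from the covariance of $\kappa_x$. You then match it with $\langle g,(-\bar\cL_\sigma)g\rangle_{L^2(\mu)}$ via the symmetry statement in Lemma~\ref{lem:TLD2}(II). So you need \eqref{eq:L3} only for the pair $(f,\cA f)$, where $f\in C^2_c\subseteq W_{\mathrm{tr}^\star}$. Also $\cA f\in W_{\mathrm{tr}}$ by Lemma~\ref{lem:wtr}(II), with finiteness coming from Lemma~\ref{lem:dirichletfinite} and no Poincar\'e inequality.

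Consequently your argument never uses Lemma~\ref{lem:3.2} or \eqref{eq:L2}. Contrary to your closing remark, $\Ltr^\star\Ltr g\in L^2(\mu_\sigma)$ plays no role. The two bounds in fact hold under \ref{ass:sigmabounds} alone, for any resolvent parameter $m>0$. The paper's route gains mainly uniformity with the lift calculus \eqref{eq:L1}--\eqref{eq:L3} that it reuses in Lemma~\ref{lem:DMSboundB}.

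One justification needs repair. The claim $h=\Pi\Ltr^\star f\in C^1_c(\R^d)$ does not follow from the smoothness of $f$, because $\Ltr^\star f$ itself need not be differentiable. The reflection $R_{\nabla_xH(x,v)}$ is singular where $\nabla_xH$ vanishes; the paper makes the same observation for $\Ltr f$ in the proof of Lemma~\ref{lem:DMSboundB}.

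The regularity comes from the velocity average. The $\kappa_x$-preserving involution $v\mapsto R_{\nabla_xH(x,v)}v$ from the proof of \eqref{eq:L3} turns the averaged bounce part into $\int\langle\nabla_xH(x,v),v\rangle f(x,v)\,\kappa_x(\ud v)$. Hence $h=\nabla_x^\star M$ with $M(x)=\int v f(x,v)\,\kappa_x(\ud v)\in C^2_c$, so $h\in C^1_c$. In particular, the bounce terms do not cancel under $\kappa_x$, as your alternative suggests; they combine into this smooth term.
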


In the remainder of the paper, we will use $\cA$ to denote the unique bounded $L^2(\mu_\sigma)$-extended operator in Lemma~\ref{lem:Afbound}. 
In the spirit of~\cite{dolbeault_hypocoercivity_2010, fan_sharp_2026}, we define the Lyapunov functional 
\begin{equation}\ \cV_{\delta}(f) \coloneqq \frac{1}{2} \|f\|^2_{L^2( \mu_\sigma )} - \delta \langle \cA f, f \rangle_{L^2(\mu_\sigma)}, \qquad \text{for any } f \in L^2(\mu_\sigma) \text{ and some } \delta \geq 0.\label{eq:defcV}\end{equation}
The  inner product $\langle \cA f, f \rangle_{L^2(\mu_\sigma)}$ is well defined due to Lemma~\ref{lem:Afbound}.
Also set
\begin{equation}\begin{aligned}
    \cE_\delta(f) &\coloneqq \langle f, (- \cL ) f \rangle_{L^2(\mu_\sigma )} + \delta \langle \cA \, \cL  f, f \rangle_{L^2(\mu_\sigma )} + \delta \langle\cA f, \cL f \rangle_{L^2( \mu_\sigma)}, \qquad  \text{for } f \in C^1_c(\R^d \times \R^d).
\end{aligned}\label{eq:defcE}\end{equation} 
Note that for $f \in C_c^1(\R^d \times \R^d)$ and $\cL$ in~\eqref{eq:cL}, we have $\cL f =\Ltr f +\lambda_\rr(\Pi  f-f)\in L^2(\mu_
\sigma)$ (since $\Ltr f\in C_c(\R^d \times \R^d)$ and $\Pi f\in L^2(\mu_\sigma)$). Hence, using Lemma~\ref{lem:Afbound}, $\cE_\delta (f) < \infty$ for all $f \in C_c^1(\R^d \times \R^d)$.
 Using these definitions, we obtain the following abstract result,  proved in Section~\ref{sec:proofcoercivity}, which, together with Theorem~\ref{thm:stationaritycore}, will imply Theorem~\ref{thm:maincnvg}.

 \begin{theorem}[Coercivity]\label{thm:coercivity}
Let~\ref{ass:wpi}, \ref{ass:sigmabounds} and \ref{ass:curvature} hold.  Let $K_0$ be as in~\eqref{eq:defconstants}.
    Set $$\delta_\ast^{-1} \coloneqq{\lambda_\rr}^{-1}\left({4 + \left(2\sqrt{K_0} + \frac{\lambda_\rr}{2\sqrt{m}}\right)^2}\right),$$
    where $m$ is the Poincar\'e constant in~\ref{ass:wpi} and $\lambda_\rr>0$ the refresh intensity of~\eqref{eq:TBP}.
    Then for $f \in C_c^1(\R^d \times \R^d)$ with $ \mu_\sigma(f) = 0$,
    \begin{equation}\cE_{\delta_\ast} (f) \geq \frac{\delta_\ast}{4} \|f\|_{L^2(\mu_\sigma)}^2.\label{eq:coercivity}\end{equation}
    Also, for $g \in L^2(\mu_\sigma)$ 
    \begin{equation}\frac{1}{4}\|g\|_{L^2(\mu_\sigma)}^2 \leq \cV_{\delta_\ast}(g) \leq \frac{3}{4}\|g\|_{L^2(\mu_\sigma)}^2 . \label{eq:Vbounds}\end{equation}
\end{theorem}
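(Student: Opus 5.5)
The bounds \eqref{eq:Vbounds} are immediate: by Lemma~\ref{lem:Afbound}, $|\langle \cA g, g\rangle_{L^2(\mu_\sigma)}| \le (2\sqrt m)^{-1}\|g\|^2$. From the definition of $\delta_\ast$ we have $\delta_\ast \le \lambda_\rr/(\lambda_\rr^2/(4m)) = 4m/\lambda_\rr$. Also $\delta_\ast \le \lambda_\rr/(4\sqrt{K_0}\lambda_\rr/(2\sqrt m))$ from the cross term in the square, and a cleaner route is to use $(a+b)^2\ge 4ab$ together with the additive $4$. This gives $\delta_\ast/(2\sqrt m)\le 1/4$, hence $\tfrac12\mp\tfrac14$. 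I would check this elementary inequality carefully. Concretely, $4+(2\sqrt{K_0}+\lambda_\rr/(2\sqrt m))^2 \ge 2\cdot 2\cdot\lambda_\rr/(2\sqrt m)$ by AM--GM on $4+b^2\ge 4b$. This yields $\delta_\ast\le \sqrt m$ and so $\delta_\ast/(2\sqrt m)\le 1/2$, which is not quite enough. I would therefore use $4+b^2\ge 4b$ with $b\ge \lambda_\rr/(2\sqrt m)+2\sqrt{K_0}$, and also $K_0\ge 4d(d+2)\ge 12$, so that $b\ge \lambda_\rr/(2\sqrt m)+4\sqrt3$ gives $4+b^2\ge 2\lambda_\rr/\sqrt m$, hence $\delta_\ast\le \sqrt m/2$.

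For the coercivity \eqref{eq:coercivity}, fix $f\in C^1_c$ with $\mu_\sigma(f)=0$ and decompose $f=\Pi f+(I-\Pi)f$. There are three terms.

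\textbf{Term 1.} Using \eqref{eq:L3}, the antisymmetric part vanishes: $\langle f,\Ltr f\rangle = \tfrac12\langle f,(\Ltr-\Ltr^\star)f\rangle$. The symmetric part is $\tfrac12\int\langle\nabla_xH,v\rangle\,[\cdots]$. By the reflection invariance of $\kappa_x$ and $|\langle \nabla H, v\rangle|$, I would show it equals $\tfrac14\int|\langle\nabla_xH,v\rangle|(f\circ R-f)^2\,d\mu_\sigma\ge0$. Hence $\langle f,-\cL f\rangle\ge \lambda_\rr\|(I-\Pi)f\|^2$, since $\Pi$ is an orthogonal projection in $L^2(\mu_\sigma)$.

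\textbf{Term 2.} Write $\langle\cA\cL f,f\rangle$ and use $\cA=(m-\bar\cL_\sigma)^{-1}\Pi\Ltr^\star$. The main contribution is $\cA\Ltr\Pi f$. By \eqref{eq:L2}, $\Pi\Ltr^\star\Ltr\Pi f=-\cL_\sigma\Pi f$, so $\cA\Ltr\Pi f=(m-\bar\cL_\sigma)^{-1}(-\bar\cL_\sigma)\Pi f$ (after justifying via Lemma~\ref{lem:TLD2} and density). Pairing with $\Pi f$, which has $\mu$-mean zero, the spectral calculus for the self-adjoint $-\bar\cL_\sigma$ with gap $\ge m$ by \ref{ass:wpi} gives
\[
\langle(m-\bar\cL_\sigma)^{-1}(-\bar\cL_\sigma)\Pi f,\Pi f\rangle\ge \tfrac12\|\Pi f\|^2 .
\]
The remaining pieces are bounded using Lemma~\ref{lem:Afbound} and $\Pi\cA=\cA$ (since $\cA f$ is a function of $x$):
\begin{itemize}
\item By \eqref{eq:L1}, $\cA\Pi f=0$, so the $\Lv$ part gives $\cA\Lv f=-\cA(I-\Pi)f$, of size $\le(2\sqrt m)^{-1}\|(I-\Pi)f\|$.
\item The term $\cA\Ltr(I-\Pi)f$ is handled by duality: $\langle\cA\Ltr h,f\rangle=\langle h,\Ltr^\star\Ltr(m-\bar\cL_\sigma)^{-1}\Pi f\rangle$. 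By Lemma~\ref{lem:3.2} this is at most $\sqrt{K_0}\,\|\bar\cL_\sigma(m-\bar\cL_\sigma)^{-1}\Pi f\|\,\|h\|\le\sqrt{K_0}\|\Pi f\|\|h\|$.
\end{itemize}

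\textbf{Term 3.} For the third term, $\langle\cA f,\cL f\rangle=\langle\cA f,\Ltr f\rangle+\lambda_\rr\langle \cA f,(\Pi-I)f\rangle$. The second piece vanishes because $\cA f$ depends only on $x$. The first piece equals $\langle\Ltr^\star\cA f, f\rangle$, and the bound $\|\Ltr\cA\|\le1$ in \eqref{eq:AfboundB}, together with the symmetry $\Pi\Ltr^\star g=-\Pi\Ltr g$ on functions of $x$ and \eqref{eq:L1}, gives $|\cdot|\le\|(I-\Pi)f\|\,\|f\|$-type control. Indeed $\Pi\Ltr^\star\cA f=0$, so only $(I-\Pi)f$ pairs.

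\textbf{Combining.} Collecting, with $a=\|\Pi f\|$ and $b=\|(I-\Pi)f\|$,
\[
\cE_\delta\ge\lambda_\rr b^2+\tfrac{\delta}{2}a^2-\delta\Bigl(2\sqrt{K_0}+\tfrac{\lambda_\rr}{2\sqrt m}\Bigr)ab-\delta b^2,
\]
where the $b^2$ term comes from the $\|\Ltr\cA\|$ piece. A Young inequality, with $\delta_\ast$ chosen so that the quadratic form dominates $\tfrac{\delta}{4}(a^2+b^2)$, finishes the argument. The explicit $\delta_\ast$ is designed exactly for this discriminant condition.

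The main obstacle is the rigorous justification of Term 2: that $\Ltr\Pi f$ lies in the domains needed, that the duality with $\Ltr^\star\Ltr$ applied to $(m-\bar\cL_\sigma)^{-1}\Pi f\in C^2_b$ is legitimate via \eqref{eq:L3} and Lemma~\ref{lem:TLD2}(III), and that $W_{\rm tr}$ membership holds without $\sigma\in L^2(\mu)$.
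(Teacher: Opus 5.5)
Your proposal follows the paper's proof. You use the velocity dissipation $\langle f,-\cL f\rangle_{L^2(\mu_\sigma)}\ge\lambda_\rr\|f-\Pi f\|^2_{L^2(\mu_\sigma)}$, the two estimates of Lemmas~\ref{lem:DMSboundA} and~\ref{lem:DMSboundB}, Young's inequality, and $4+b^2\ge 4b$ to get $\delta_\ast/(2\sqrt m)\le\tfrac14$. You re-derive the two lemmas with spectral calculus for $-\bar\cL_\sigma$ in place of the resolvent bound of Lemma~\ref{lem:TLD2}(III); self-adjointness follows from the symmetry in Lemma~\ref{lem:TLD2}(II) on the core $C_c^\infty(\R^d)$, and this even improves $\tfrac32\sqrt{K_0}$ to $\sqrt{K_0}$. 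The domain issue you flag is exactly what the Claim in the proof of Lemma~\ref{lem:DMSboundB} settles, by approximating $\Ltr f$ with $h_n\in C_c^\infty(\R^d\times\R^d)$.

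There are three minor slips. First, $4+b^2\ge4b$ with $b=\lambda_\rr/(2\sqrt m)$ already gives $\delta_\ast\le\sqrt m/2$, so $K_0\ge12$ is not needed. Second, Term~1 should read $\langle f,\Ltr f\rangle=\tfrac12\langle f,(\Ltr+\Ltr^\star)f\rangle$. Third, in Term~3 the $b^2$ bound needs $\cA f=\cA(I-\Pi)f$, which follows from $\cA\,\Pi f=0$. This gives $\|\Ltr\cA f\|_{L^2(\mu_\sigma)}\le\|f-\Pi f\|_{L^2(\mu_\sigma)}$ rather than only $\le\|f\|_{L^2(\mu_\sigma)}$.
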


\section{A core of the $L^2(\mu_\sigma)$-generator of the semigroup $\cP$}
\label{sec:invariance}
In this section, we adapt the methods in~\cite{durmus_piecewise_2021} to obtain fundamental properties of the PDMP $(X, V)$ in~\eqref{eq:TBP} and its generator, which are listed in Theorems~\ref{thm:Pcontraction} and~\ref{thm:stationaritycore} below. It should be noted that the results in this section do not rely on assumptions~\ref{ass:wpi}-\ref{ass:curvature}, only requiring the smoothness of $\sigma$ and $U$, which we assume throughout the paper.  

Recall that the Markov semigroup $\cP = (\cP_t)_{t \in \R_+}$ is defined by $\cP_t f(x, v) \coloneqq  \E_{(x, v) }[f(X_t, V_t)]$ for $f \in C_b(\R^d \times \R^d)$ and $(x, v, t) \in \R^d \times \R^d \times \R_+$.
We use the notions of generators of semigroups, cores, and operator closures as defined in~\cite[pp. 8, 17]{ethier_markov_1986}.
We need the following result before we can define the infinitesimal generator of the Markov semigroup $\cP$.
\begin{theorem}\label{thm:Pcontraction}
    $(\cP_t)_{t \in \R_+}$ uniquely extends to a strongly continuous contraction semigroup on $L^2(\mu_\sigma)$. 
\end{theorem}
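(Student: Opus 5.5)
The plan is to prove first that $\mu_\sigma$ is invariant for $(\cP_t)$, i.e.\ $\mu_\sigma(\cP_t f)=\mu_\sigma(f)$ for $f\in C_b(\R^d\times\R^d)$. Jensen's inequality then gives $(\cP_t f)^2\le \cP_t(f^2)$, hence $\|\cP_t f\|_{L^2(\mu_\sigma)}^2\le \mu_\sigma(\cP_t f^2)=\|f\|^2_{L^2(\mu_\sigma)}$. So each $\cP_t$ is an $L^2(\mu_\sigma)$-contraction on the dense subspace $C_b$ (indeed $C_c$ is dense), and it extends uniquely by continuity. The semigroup property survives the extension by density and continuity.

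For invariance I would follow~\cite{durmus_piecewise_2021}. Since $(X,V)$ is a non-explosive PDMP with locally bounded jump rates, for $f\in C^1_c(\R^d\times\R^d)$ Dynkin's formula holds, $\cP_t f - f=\int_0^t \cP_s\cL f\,\ud s$, with $\cL=\Ltr+\lambda_\rr\Lv$ as in~\eqref{eq:cL}. Here $\cL f$ is bounded only if $\Pi f$ is bounded, which is true for $f\in C_c$, though $\Pi f$ need not have compact support. It then suffices to show $\mu_\sigma(\cL f)=0$ for such $f$ and to justify exchanging the integrals.

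The vanishing of $\mu_\sigma(\Lv f)$ is immediate from $\mu_\sigma(\ud x,\ud v)=\mu(\ud x)\kappa_x(\ud v)$. For $\mu_\sigma(\Ltr f)$, I take $h\equiv1$ in~\eqref{eq:L3}. Since $\Ltr^\star 1=0$, we get $1\in W_{\mathrm{tr}^\star}$, while $f\in C^1_c$ gives $\Ltr f\in C_c\subset L^2$, so $f\in W_\mathrm{tr}$. Hence $\mu_\sigma(\Ltr f)=\langle \Ltr^\star 1,f\rangle=0$. The underlying computation is integration by parts in $x$ against $e^{-H}$: the transport term yields $\int f\langle v,\nabla_xH\rangle\,\ud\mu_\sigma$. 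For the bounce term, the reflection $v\mapsto R_{\nabla_xH}v$ preserves $|v|$ and hence $H$, since $H$ depends on $v$ only through $|v|$. It also has unit Jacobian and flips the sign of $\langle v,\nabla_xH\rangle$, so it produces exactly $-\int f\langle v,\nabla_xH\rangle\,\ud\mu_\sigma$. Note that $\nabla_x H(x,R v)=\nabla_xH(x,v)$ for the same reason.

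To pass from the generator identity to $\mu_\sigma(\cP_t f)=\mu_\sigma(f)$, I would use Fubini, which needs $\mu_\sigma(\cP_s|\cL f|)$ finite. This is the main obstacle, because it is circular without invariance. I would first try a truncation or stopping argument: localize with exit times from balls, which is possible since the process is non-explosive and the rates are bounded on compacts. An alternative is to show that the backward evolution, i.e.\ the PDMP with generator $\Ltr^\star+\lambda_\rr\Lv$ (reversed velocity), is dual to $\cP$, which gives $\int g\,\cP_t f\,\ud\mu_\sigma=\int f\,\hat\cP_t g\,\ud\mu_\sigma$ for nonnegative $f,g$. Taking $g=1$ then yields invariance. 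A cleaner route is the approach in~\cite{durmus_piecewise_2021}: verify $\mu_\sigma(\cL f)=0$ on a core of the $C_0$ (or bounded-pointwise) generator, and invoke their invariance criterion.

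Strong continuity comes last. For $f\in C_c$, paths are right-continuous, so $\cP_tf\to f$ pointwise and boundedly, and dominated convergence in $L^2(\mu_\sigma)$ gives $\|\cP_tf-f\|_{L^2(\mu_\sigma)}\to0$. This extends to all of $L^2(\mu_\sigma)$ by density together with the uniform contraction bound.
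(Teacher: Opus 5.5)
\textbf{Verdict: genuine gap.} Your reduction is sound. If $\mu_\sigma$ is invariant for the Markov kernels $\cP_t$, Jensen gives $\|\cP_t f\|_{L^2(\mu_\sigma)}\le\|f\|_{L^2(\mu_\sigma)}$, the extension is unique by density, and strong continuity follows from bounded pointwise convergence on $C_c(\R^d\times\R^d)$. But invariance of $\mu_\sigma$ under the full process is essentially the whole content of the theorem, and you do not prove it.

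You also misplace the obstacle. For $f\in C^1_c(\R^d\times\R^d)$ the function $\cL f$ is bounded and $\mu_\sigma$ is a probability measure, so Fubini is immediate. The real circularity is elsewhere. Dynkin's formula only gives $\mu_\sigma(\cP_t f)-\mu_\sigma(f)=\int_0^t\mu_\sigma(\cP_s\cL f)\,\ud s$. Knowing $\mu_\sigma(\cL f)=0$ on $C^1_c$ says nothing about $\mu_\sigma(\cP_s\cL f)$, unless $C^1_c$ is a core or $\cP_s$ maps into a class on which $\mu_\sigma(\cL\,\cdot\,)=0$ is already known.

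None of your three routes closes this.
\begin{itemize}
\item Localising with exit times helps with integrability, not with the circularity.
\item Establishing the duality with the reversed process is at least as hard as invariance itself, and you give no argument for it.
\item The ``cleaner route'' fails as stated. For $f\in C^1_c$ the refresh term $\lambda_\rr\Pi f$ is constant in $v$, so it is not in $C_0(\R^d\times\R^d)$. Hence $C^1_c$ is not even in the domain of a $C_0$-generator of the full process. Moreover, the velocity space is non-compact, so~\cite{durmus_piecewise_2021} does not apply off the shelf (the paper notes that its assumption A2 fails). Finally, $\lambda_\rb=\langle v,\nabla_xH\rangle_+$ is not $C^1$, so you cannot show directly that $\cP_t$ preserves $C^1$.
\end{itemize}

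The paper's proof is organised around exactly these obstacles.
\begin{itemize}
\item It first removes the refreshment. The resulting process conserves speed, so $\hat\cP^0$ preserves $C_0(\R^d\times\R^d)$ (Lemmas~\ref{lem:compactsupport} and~\ref{lem:feller}).
\item It smooths the bounce rate to $\lambda_\rb^\epsilon$, for which $\hat\cP^\epsilon_t$ preserves $C^1_c$. Using $\|\hat\cP_t^0f-\hat\cP_t^\epsilon f\|_\infty\le 4t\epsilon\|f\|_\infty$, it concludes that $C^1_c$ is a core for the $C_0$-generator of $\hat\cP^0$. Invariance of $\hat\cP^0$ then follows from $\mu_\sigma(\Ltr f)=0$ (Lemma~\ref{lem:core1}).
\item Only then does it pass to $L^2(\mu_\sigma)$, where $\lambda_\rr(\Pi-I)$ is a bounded dissipative perturbation. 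It builds $\cQ$ and identifies $\cQ=\cP$ by matching the Dyson--Phillips series with the decomposition of $\cP_t$ by the number of refresh events (Lemma~\ref{lem:Pcontraction}).
\end{itemize}

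Your plan can be repaired along the same lines. That refresh-count decomposition, together with $\mu_\sigma(\Pi f)=\mu_\sigma(f)$, reduces invariance of $\cP$ to invariance of $\hat\cP^0$, after which your Jensen argument finishes. However, invariance of $\hat\cP^0$ still needs the smoothed-rate core argument, and that is the missing idea in your proposal. Note also that the paper's route produces the $L^2(\mu_\sigma)$-core of Theorem~\ref{thm:stationaritycore}, which is needed later; your route would not provide it.
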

The above theorem follows from Lemma~\ref{lem:Pcontraction} below.
 Using Theorem~\ref{thm:Pcontraction} define $(\bar \cL, \mathrm{Dom}(\bar \cL))$ as the $L^2(\mu_\sigma)$-generator of $(\cP_t)_{t \in \R_+}$. Recall the definition of $\cL$ in~\eqref{eq:cL}.
 
 \begin{theorem}\label{thm:stationaritycore} 
 (I) The set $C_c^1(\R^d \times \R^d) $ is contained in $\mathrm{Dom}(\bar \cL)$. For $f \in C_c^1(\R^d \times \R^d)$, $\bar \cL f = \cL f$. \\
 (II) $C_c^1(\R^d \times \R^d)$ is a core for the generator $(  \bar \cL , \mathrm{Dom}(  \bar \cL))$ of $(\cP_t)_{t \in \R_+}$.\\
 (III) $\mu_\sigma$ is an invariant distribution for $(\cP_t)_{t \in \R_+}$.
\end{theorem}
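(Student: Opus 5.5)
We prove the three parts in the order (I), (III), (II), following the strategy of~\cite{durmus_piecewise_2021} for PDMPs with locally bounded jump rates.

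\textbf{Part (I).} Fix $f \in C_c^1(\R^d \times \R^d)$. The plan is to show $(\cP_t f - f)/t \to \cL f$ in $L^2(\mu_\sigma)$.
\begin{enumerate}
\item Apply Dynkin's formula for the extended generator of the PDMP~\eqref{eq:TBP}. Between jumps the flow is linear, the bounce rate is $\lambda_\rb$, and the refresh kernel is $\kappa_x$. This gives $\cP_t f - f = \int_0^t \cP_s \cL f \, \ud s$ pointwise.
\item Note that $\cL f = \Ltr f + \lambda_\rr(\Pi f - f)$ is continuous. The term $\Ltr f$ is compactly supported. The term $\Pi f$ is bounded, because $f$ is bounded and $\kappa_x$ is a probability measure. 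Hence $\cL f \in C_b \cap L^2(\mu_\sigma)$.
\item For $h \in C_b$, the paths of~\eqref{eq:TBP} are right-continuous, so $\cP_s h \to h$ pointwise as $s \downarrow 0$. Dominated convergence then upgrades this to convergence in $L^2(\mu_\sigma)$.
\item Combining with Theorem~\ref{thm:Pcontraction}, $(\cP_t f - f)/t = t^{-1}\int_0^t \cP_s \cL f\,\ud s \to \cL f$ in $L^2(\mu_\sigma)$. So $f \in \mathrm{Dom}(\bar\cL)$ and $\bar\cL f = \cL f$.
\end{enumerate}

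\textbf{Part (III).} It suffices to show $\int \cL f \, \ud\mu_\sigma = 0$ for all $f \in C_c^1$, and then pass from this to invariance.
\begin{enumerate}
\item The refresh part vanishes: $\int (\Pi f - f)\,\ud\mu_\sigma = 0$, because $\mu_\sigma = \mu \otimes \kappa_x$.
\item For the transport--bounce part, apply~\eqref{eq:L3} with $h = 1$, which lies in $W_{\mathrm{tr}^\star}$ since $\Ltr^\star 1 = 0$. This gives $\int \Ltr f \,\ud\mu_\sigma = \langle \Ltr^\star 1, f\rangle = 0$.
\item \emph{Verifying directly.} Integrate $\langle v, \nabla_x f\rangle$ by parts in $x$ against $e^{-H}$; this produces $\int f \langle v, \nabla_x H\rangle \, \ud\mu_\sigma$. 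For the bounce term, change variables $v \mapsto R_{\nabla_x H} v$. This map is an isometry, it preserves $|v|$ and hence $H$, and it flips the sign of $\langle v, \nabla_x H\rangle$. The bounce term therefore contributes $\int f\,(\langle -\nabla_x H, v\rangle_+ - \langle \nabla_x H, v\rangle_+)\,\ud\mu_\sigma = -\int f \langle v, \nabla_x H\rangle\,\ud\mu_\sigma$, which cancels the transport term.
\item Once (II) is available, $\int \bar\cL f \, \ud\mu_\sigma = 0$ on a core. This extends to all of $\mathrm{Dom}(\bar\cL)$. Since $\frac{\ud}{\ud t}\mu_\sigma(\cP_t f) = \mu_\sigma(\bar\cL \cP_t f) = 0$, we get $\mu_\sigma \cP_t = \mu_\sigma$.
\item To avoid circularity with (II), instead obtain invariance directly from the Dynkin identity in (I). Integrate $\cP_t f - f = \int_0^t \cP_s \cL f\, \ud s$ against $\mu_\sigma$ with a Fubini argument. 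Then use the standard Echeverr\'ia-type criterion from~\cite{durmus_piecewise_2021}, which applies to non-explosive PDMPs whose bounded-rate approximations are well behaved.
\end{enumerate}

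\textbf{Part (II), the main obstacle.} I would use the core criterion~\cite[Ch.1 Prop.~3.3]{ethier_markov_1986}: it suffices to find a dense, $\cP_t$-invariant subspace of $\mathrm{Dom}(\bar\cL)$ contained in the graph closure of $\cL|_{C_c^1}$. The difficulty is that $\cP_t$ does not preserve compact support or $C^1$ regularity: bounce times depend on position, and refresh velocities are unbounded. I will proceed in three steps.
\begin{enumerate}
\item \emph{Reduction from $C_b^1$ to $C_c^1$.} Show that $C_b^1$ functions $g$ with $\cL g \in L^2(\mu_\sigma)$ lie in the graph closure of $C_c^1$. Use cutoffs $g\chi_n$ with $\chi_n(x,v) = \chi(x/n, v/n)$. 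The error $\Ltr(g\chi_n) - \chi_n \Ltr g$ consists of a term $\langle v, \nabla_x\chi_n\rangle g$ and a bounce-difference term. Both are bounded by $C(|v|/n)(1 + \lambda_\rb)\ind_{\{|(x,v)| \geq n\}}$.
\item \emph{Making the error small.} The previous bound tends to zero in $L^2(\mu_\sigma)$ provided $\lambda_\rb \in L^2_{\mathrm{loc}}$ with suitable growth. I would handle this by also truncating the velocity, so that only compact regions matter, and using dominated convergence.
\item \emph{Invariance of a dense class.} For the smoothness part, follow~\cite{durmus_piecewise_2021}: use the resolvent $(\lambda - \bar\cL)^{-1}$ and show that the range of $\lambda - \cL$ on $C_c^1$ is dense. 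Argue by contradiction. Suppose $h \in L^2(\mu_\sigma)$ is orthogonal to $(\lambda - \cL)C_c^1$. Then $h$ is a weak solution of $(\lambda - \cL^\ast)h = 0$, where $\cL^\ast = \Ltr^\star + \lambda_\rr \Lv$. Conclude $h = 0$ by an energy estimate along the characteristics of the time-reversed transport--bounce--refresh dynamics; this uses that $\cL^\ast$ is dissipative and generates the adjoint contraction semigroup.
\end{enumerate}
The delicate step is justifying this uniqueness for rough $h$, which requires mollification in $x$ that commutes approximately with the bounce operator. This is the point where I expect most of the technical work.
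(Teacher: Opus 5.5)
Your Part (I) is a valid alternative to the paper's. You use Dynkin's formula plus strong continuity of $\cP_s$ on $C_b$ in $L^2(\mu_\sigma)$, via Theorem~\ref{thm:Pcontraction}; this works because for $f\in C_c^1$ the bounce term is supported where $\lambda_\rb$ is bounded and $\cL f\in C_b$. The paper instead reads (I) off from the identification $\cP=\cQ$ in Lemma~\ref{lem:Pcontraction}. Your computation of $\int\cL f\,\ud\mu_\sigma=0$ in (III) is the paper's. Once you invoke Theorem~\ref{thm:Pcontraction}, invariance is in fact immediate: apply $L^2(\mu_\sigma)$-contractivity to $1+\epsilon g$ with $g\in C_b^1$, use $\cP_t1=1$, and let $\epsilon\to0^{\pm}$ to get $\mu_\sigma(\cP_tg)=\mu_\sigma(g)$. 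Your Echeverr\'ia step is therefore vague and unnecessary.

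The genuine gap is Part (II), which carries all the content and is not proved. Steps 1--2 only reduce $C_b^1$ to $C_c^1$. That reduction is useless unless some dense class of $C_b^1$ functions is $\cP_t$-invariant, and you note yourself that none is available. Step 3 then switches to range density, which would make steps 1--2 superfluous anyway. In step 3, an $h\perp(\lambda-\cL)C_c^1$ is only known to lie in $\mathrm{Dom}(A^\ast)$, where $A$ is the closure of $\cL|_{C_c^1}$. To exploit dissipativity you need $h\in\mathrm{Dom}(\bar\cL^\ast)$, or regular $h_n\to h$ with $(\lambda-\cL^\ast)h_n\to0$. Saying that ``$\cL^\ast$ generates the adjoint contraction semigroup'' presupposes $A^\ast=\bar\cL^\ast$. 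Taking adjoints again, that is equivalent to $A=\bar\cL$, i.e.\ to the statement being proved. The regularisation that would actually establish it is explicitly deferred. A Friedrichs-type argument might work: convolution in $x$ commutes with $\langle v,\nabla_x\cdot\rangle$, the bounce, refresh and weight terms are zero order and locally bounded, and cut-offs radial in $v$ commute with the reflections. As written, though, this is a plan rather than a proof.

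The idea you are missing, which lets the paper avoid any PDE uniqueness, is to peel off the refresh. Since $\lambda_\rr(\Pi-I)$ is bounded on $L^2(\mu_\sigma)$, it can be added at the very end as a bounded perturbation, which preserves cores \cite[Ch.~1, Thm~7.1]{ethier_markov_1986}. The perturbed semigroup is then identified with $\cP$ by a Dyson--Phillips expansion in the number of refreshes (Lemma~\ref{lem:Pcontraction}).

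Without refresh the speed $|V_t|$ is conserved, so the no-refresh semigroups map $C_c$ into $C_c$ (Lemma~\ref{lem:compactsupport}). This removes the ``unbounded refresh velocities'' obstruction you identified.

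The non-smoothness of $\lambda_\rb$ is handled by approximation, not by uniqueness. With $C^1$ rates $\lambda_\rb^\epsilon$ satisfying $\|\lambda_\rb-\lambda_\rb^\epsilon\|_\infty\le2\epsilon$, the semigroups $\hat\cP^\epsilon$ preserve $C_c^1$ \cite[Thm~17]{durmus_piecewise_2021}. The bound $\|\hat\cP_t^0f-\hat\cP_t^\epsilon f\|_\infty\le4t\epsilon\|f\|_\infty$, together with \cite[Prop.~27]{durmus_piecewise_2021}, then makes $C_c^1$ a core for the Feller generator of $\hat\cP^0$. Range density of $k-\Ltr$ on $C_c^1$ transfers from $C_0$ to $L^2(\mu_\sigma)$ via $\|\cdot\|_{L^2(\mu_\sigma)}\le\|\cdot\|_\infty$ (Lemmas~\ref{lem:core1} and~\ref{lem:core2}), and (I)--(II) follow together.
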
 

 The remainder of this section is devoted to proving Theorems~\ref{thm:Pcontraction} and~\ref{thm:stationaritycore}.
The structure of the argument establishing these results is as follows:\\
\textbf{(I) } In Lemma~\ref{lem:explosiveness}, we show that the PDMP $(X, V)$ is non-explosive. For $\epsilon \in [0, 1]$, we construct PDMPs $(\hat X_t^\epsilon, \hat V_t^\epsilon)_{t \in \R_+}$ that have no \textit{refresh} component (i.e., $\lambda_\rr = 0$) and the \textit{bounce} rate $\lambda_\rb^\epsilon$ defined in~\eqref{eq:boundce_rate_epsilon} below; and we define the corresponding semigroup $\hat\cP^\epsilon = (\hat \cP^\epsilon_t)_{t\in \R_+}$. For $\epsilon \in (0, 1]$, $\lambda_\rb^\epsilon$ are differentiable approximators of $\lambda_\rb^0 \coloneqq \lambda_\rb$ (which is not differentiable).\\
\textbf{(II)} Using Lemmas~\ref{lem:compactsupport} and~\ref{lem:feller}, we show that for $\epsilon \in (0, 1]$, the semigroup $\hat \cP^\epsilon$  is Feller continuous and that  $C_c^1(\R^d \times \R^d)$ is a core of its generator.  By Lemma~\ref{lem:feller}, we also have that the semigroups $(\hat \cP^\epsilon)_{\epsilon \in (0, 1]}$ approximate $\hat\cP^0$. Using this, in Lemma~\ref{lem:core1}, we show that $C_c^1(\R^d \times \R^d)$ is a core of the strong generator of $\hat \cP^0$. On this core the strong generator equals $\Ltr$ in~\eqref{eq:Ltradjoint} and $\hat \cP^0$ is $\mu_\sigma$-invariant. \\
\textbf{(III)} Thus, we define the $L^2(\mu_\sigma)$-generator of $\hat \cP^0$. In Lemma~\ref{lem:core2}, we show that $C_c^1(\R^d \times \R^d)$ is a core of the  $L^2(\mu_\sigma)$-generator of $\hat \cP^0$, which offers the representation $\Ltr$ on this core. Using this, we show that the closure of $\Ltr + \lambda_\rr \Lv $ also generates a strongly continuous contraction semigroup $\cQ = (\cQ_t)_{t \in \R_+}$ on $L^2(\mu_\sigma)$. Step~\textbf{(III)} reintroduces the \textit{refresh} component (recall $\Lv$ from~\eqref{eq:Lv}.) \\
\textbf{(IV)} In Lemma~\ref{lem:Pcontraction}, we identify $\cQ$ with $\cP$, where $\cP=(\cP_t)_{t \in \R_+}$ is the Markov semigroup corresponding to the PDMP $(X, V)$ satisfying~\eqref{eq:TBP}. Hence, $\cP$ is a strongly continuous contraction on $L^2(\mu_\sigma)$, and we define the $L^2(\mu_\sigma)$-generator of $\cP$. Finally, in Theorem~\ref{thm:stationaritycore}, we show that the $L^2(\mu_\sigma)$-generator of $\cP$ aligns with $\cL = \Ltr + \lambda_\rr \Lv$ on a core $C_c^1(\R^d \times \R^d)$, and that $\cP$ is $\mu_\sigma$-invariant. 

\smallskip

We have the following result on non-explosiveness.
It should be noted that the following lemma does not require any of the assumptions~\ref{ass:wpi}, \ref{ass:sigmabounds}, and~\ref{ass:curvature} to hold.
\begin{lemma}[Non-explosiveness]\label{lem:explosiveness}
    Let $\tau \coloneqq \inf\{t \in \R_+ : |X_t| + |V_t| +N^{\rb}_t= \infty\}$. Then $\P(\tau < \infty) = 0$.
\end{lemma}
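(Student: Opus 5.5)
We show that $\tau=\infty$ almost surely by bounding position, velocity and the number of bounces on each finite horizon $[0,T]$.

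\textbf{Step 1: bounces preserve speed.} For any $u$, the reflection $R_u$ is an isometry, so $|R_u b|=|b|$. Hence $|V_t|$ changes only at refresh times, i.e.\ at jumps of $N^\rr$.

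\textbf{Step 2: refreshments are finitely many.} $N^\rr$ is a Poisson process with constant rate $\lambda_\rr$, so almost surely it has finitely many jumps on $[0,T]$ for every $T$. Refreshed velocities $\tilde V_s\sim\kappa_{X_s}$ are almost surely finite, since $\sigma$ is finite everywhere.

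\textbf{Step 3: inductive bound between refresh times.} Fix $T$, and let $0<S_1<\dots<S_n\le T$ be the refresh times in $[0,T]$. Set $S_0:=0$ and $S_{n+1}:=T$. Argue by induction on $k$. Suppose $(X_{S_k},V_{S_k})$ is finite. On $[S_k,S_{k+1})$ the speed is the constant $r_k=|V_{S_k}|$. Since $\dot X=V$, we get $|X_t|\le |X_{S_k}|+r_k(t-S_k)$, so $X_t$ stays in a compact ball $B_k$. The process therefore remains in the compact set
\[
\mathcal K_k=\bar B_k\times\{|v|=r_k\}.
\]
On $\mathcal K_k$ the bounce rate $\lambda_\rb=\langle v,\nabla_x H\rangle_+$ is bounded by some $\Lambda_k<\infty$, because $U$ and $\sigma$ are smooth and hence $\nabla_x H$ is continuous. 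Construct $N^\rb$ by thinning a Poisson process of rate $\Lambda_k$; in the construction of \cite[Construction~1]{durmus_piecewise_2021} this is a coupling. This dominates the number of bounces in $[S_k,S_{k+1})$ by a Poisson$(\Lambda_k(S_{k+1}-S_k))$ variable, which is almost surely finite. At $S_{k+1}$ the refreshed velocity is almost surely finite and $X$ is continuous, which closes the induction.

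\textbf{Step 4: conclusion.} Summing over $k\le n$, $|X_t|+|V_t|+N^\rb_t<\infty$ for all $t\le T$ almost surely. Hence $\P(\tau\le T)=0$ for every $T$, and taking $T\in\N$ with a countable union gives $\P(\tau<\infty)=0$.

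The main obstacle is Step 3. There one must justify rigorously that the number of bounces is controlled given the random, state-dependent compact set $\mathcal K_k$, which depends on the past. The plan is to condition on $\mathcal F_{S_k}$ and use the strong Markov property at $S_k$. After that, apply a thinning or compensator argument: $N^\rb_t-\int_0^t\lambda_\rb(X_s,V_s)\,\ud s$ is a local martingale, and the integrated intensity is bounded by $\Lambda_k(S_{k+1}-S_k)<\infty$, so $N^\rb$ cannot explode on $[S_k,S_{k+1})$.
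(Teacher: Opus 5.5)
Your proof is correct and follows the same route as the paper. Bounces preserve $|V|$, so between refreshments the process stays in a compact set on which the continuous bounce rate is bounded, which rules out accumulation of bounces. The only difference is that the paper proves the case $\lambda_\rr = 0$ and then cites \cite[Prop.~9]{durmus_piecewise_2021} to add the constant-rate refreshments, whereas you do that step by hand by induction over the almost surely finitely many refresh times in $[0,T]$.
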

\begin{proof}
    Consider the case when $\lambda_\rr = 0$. I.e., the PDMP~\eqref{eq:TBP} has no \textit{refresh} jumps. In this case, the process is non-explosive because it moves at constant speed (i.e., the magnitude of velocity does not change), as \textit{bounce} jumps conserve speed.
    Also, using the fact that the bounce rate $\lambda_\rb$ is locally bounded, we have that $\lambda_\rb (X_t, V_t) \leq K_t$, where $K_t = \sup\{\lambda_\rb(x, v) : |x - X_0| \leq |V_0| t \text{ and } |v| = |V_0|\} < \infty$. Therefore, $N^\rb_t < \infty$ a.s., for all $t \in \R_+$.
    Using this, for the case when $\lambda_\rr > 0$, the PDMP~\eqref{eq:TBP} is also non-explosive using \cite[Prop.~9]{durmus_piecewise_2021} and the fact that the refresh rate $\lambda_\rr$ is a finite constant.
\end{proof}

 We modify the notion of \textit{smoothly and compactly approximable} PDMPs from~\cite[Def.~20]{durmus_piecewise_2021} to fit our purpose. Slight modifications are required, as~\cite[Def.~20]{durmus_piecewise_2021} is primarily applicable only to PDMPs with compact velocity spaces. On the other hand, due to the nature of our tempering approach, the velocity state space is $\R^d$. Specifically, the approximating PDMPs $(\hat X^\epsilon, \hat V^\epsilon)_{\epsilon \in (0, 1]}$ as defined below, do not satisfy assumption A2 of~\cite{durmus_piecewise_2021}, hence Lemma~\ref{lem:feller} below is needed.

Recall that $\lambda_\rb : \R^d \times \R^d \rightarrow \R_+$ denotes the non-differentiable jump rate for~\eqref{eq:TBP} given by $\lambda_\rb(x, v) = \langle v, \nabla_x H (x, v) \rangle_+$.
 For $\epsilon \in (0, 1]$, define the function 
\begin{equation}
\label{eq:boundce_rate_epsilon}
    \lambda^\epsilon_\rb(x, v) \coloneqq \frac{(\langle\nabla_x H(x, v), v \rangle - \epsilon )_+^2}{\epsilon + (\langle\nabla_x H(x, v), v \rangle - \epsilon )_+} \quad \text{for } (x, v) \in \R^d \times \R^d.
\end{equation}
For $\epsilon = 0$, set $\lambda_\rb^\epsilon \coloneqq \lambda_\rb$.
For $\epsilon \in [0, 1]$ let $(\hat X^\epsilon, \hat V^\epsilon) \coloneqq (\hat X^\epsilon_t,\hat V^\epsilon_t)_{t \in \R_+} $ be the solution to the following SDE in $\R^d \times \R^d$ defined very similarly to~\eqref{eq:TBP}.
\begin{equation}
    \begin{aligned}
        \hat X_t^\epsilon = \hat X_0^\epsilon &+ \int_0^t \hat V_s^\epsilon \, \ud s,\\
        \hat V_t^\epsilon =\hat V_0^\epsilon  &+  \int_0^t (R_{\nabla_x H  (\hat X_s^\epsilon,\hat V_s^\epsilon)} \hat V_{s-}^\epsilon -\hat V_{s-}^\epsilon) \, \ud N_s^{\mathrm{b},\epsilon}, 
    \end{aligned}
     \label{eq:TBPapprox}
\end{equation}
for $t \in \R_+$ and $(\hat X_0^\epsilon,\hat V_0^\epsilon)\in \R^d \times \R^d$.  
The counting process 
$ (N_t^{\mathrm{b}, \epsilon})_{t\in\R_+}$ 
in $\N\coloneqq \{0,1,\ldots\}$ has positive jumps of size one arriving at a stochastic intensity given by
$ \lambda_\rb^\epsilon(\hat X_t^\epsilon,\hat V_t^\epsilon)$.
Note that the PDMP $(\hat X^\epsilon, \hat V^\epsilon)$ has no \textit{refresh} behaviour.
Again, as the jump intensity  $\lambda_\rb^\epsilon$ is locally bounded, hence, the PDMP $(\hat X^\epsilon, \hat V^\epsilon)$ is given by~\cite[Construction 1]{durmus_piecewise_2021}. The next lemma has the same proof as Lemma~\ref{lem:explosiveness}.

\begin{lemma} Fix $\epsilon \in [0, 1]$.
    Let $\tau^\epsilon \coloneqq \inf\{t \in \R_+ : |\hat X_t^\epsilon| + | \hat V_t^\epsilon| = \infty\}$. Then $\P(\tau^\epsilon < \infty) = 0$.
\end{lemma}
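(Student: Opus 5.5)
The plan is to copy the argument of Lemma~\ref{lem:explosiveness}, restricted to the refresh-free case, since $(\hat X^\epsilon,\hat V^\epsilon)$ in~\eqref{eq:TBPapprox} has no refresh component. It has two parts: show that the speed is conserved, and then show that the number of bounce events on every bounded time interval is almost surely finite. Together these rule out blow-up of $|\hat X^\epsilon_t|+|\hat V^\epsilon_t|$.

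\emph{Step 1 (speed is conserved).} For any $u\neq 0$ the reflection $R_u$ is an orthogonal map, so $|R_u b|=|b|$, and $R_0$ is the identity. Every jump of $\hat V^\epsilon$ is of the form $\hat V^\epsilon_{s-}\mapsto R_{\nabla_x H(\hat X^\epsilon_s,\hat V^\epsilon_s)}\hat V^\epsilon_{s-}$. Between jumps the velocity is constant. Hence, up to the (possible) accumulation time of jumps, $|\hat V^\epsilon_t|=|\hat V^\epsilon_0|$ and $|\hat X^\epsilon_t-\hat X^\epsilon_0|\le |\hat V^\epsilon_0|\,t$. So the only way explosion can occur is through infinitely many bounce jumps in finite time.

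\emph{Step 2 (locally bounded rate).} For $\epsilon\in(0,1]$ we have $\lambda^\epsilon_\rb\le \langle\nabla_x H,v\rangle_+=\lambda_\rb$. This follows from $a^2/(\epsilon+a)\le a$ applied with $a=(\langle\nabla_x H,v\rangle-\epsilon)_+\le\langle\nabla_x H,v\rangle_+$. The function $\lambda_\rb$ is continuous, because $U$ and $\sigma$ are smooth and $\sigma\ge1$, and so it is locally bounded. Fix $T>0$ and set
\[
K_T\coloneqq\sup\{\lambda_\rb(x,v): |x-\hat X^\epsilon_0|\le |\hat V^\epsilon_0|T,\ |v|=|\hat V^\epsilon_0|\}.
\]
This is finite because the set is compact. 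By Step 1, on $[0,T]$ the intensity of $N^{\rb,\epsilon}$ is dominated by the constant $K_T$.

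\emph{Step 3 (conclusion).} Use the thinning construction of~\cite[Construction~1]{durmus_piecewise_2021}: the bounce times are obtained from a homogeneous Poisson process of rate $K_T$ on $[0,T]$. This gives $N^{\rb,\epsilon}_T\le \tilde N_T<\infty$ almost surely. Alternatively, note directly that $\E N^{\rb,\epsilon}_{T\wedge\tau^\epsilon}\le K_T T$. Thus only finitely many jumps occur on $[0,T]$, and $|\hat X^\epsilon_t|+|\hat V^\epsilon_t|\le |\hat X^\epsilon_0|+|\hat V^\epsilon_0|(1+T)$ there. Letting $T\to\infty$ along the integers gives $\P(\tau^\epsilon<\infty)=0$.

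The only mildly delicate point is making the domination argument in Step 3 rigorous before non-explosion is known. I would handle this by working with the process stopped at the $n$-th jump time and letting $n\to\infty$.
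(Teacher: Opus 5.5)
Your proof is correct and follows the paper's route. The paper says this lemma has the same proof as Lemma~\ref{lem:explosiveness}: bounces conserve speed, and the bounce rate is uniformly bounded on the compact set reachable by time $t$, so $N^{\rb,\epsilon}_t<\infty$ almost surely. Your extra points, the bound $\lambda^\epsilon_\rb\le\lambda_\rb$ and stopping at the $n$-th jump time to make the Poisson domination rigorous before non-explosion is known, only fill in details the paper leaves implicit.
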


Define the Markov semigroup $ \hat\cP^\epsilon \coloneqq ( \hat\cP^\epsilon_t)_{t \in \R_+}$ by $\hat\cP_t^\epsilon f (x, v) \coloneqq \E_{(x, v)} [f(\hat X_t^\epsilon, \hat V_t^\epsilon)]$ for $f \in C_b(\R^d \times \R^d)$ and $(x, v) \in \R^d \times \R^d$. We define the norm  $\| f\|_\infty \coloneqq \sup_{x \in H } |f(x)|$ for $f : H \rightarrow \R$ on some Polish space~$H$. 
 Let $C_0(\R^d \times \R^d)$ denote the closure of $C_c(\R^d \times \R^d)$ in the $\| \cdot\|_\infty$ norm. Also, let $\supp(f)$ denote the support of $f : \R^d \times \R^d \rightarrow \R$.
 We prove the following property that helps us show that the semigroup $\hat \cP^\epsilon$ is Feller continuous (defined, e.g., in~\cite [Sec.8]{durmus_piecewise_2021}) for $\epsilon \in (0, 1]$. The proof of the first part of the following is centered around the idea that for $t \geq 0$, and $\cK_0 \subset \R^d \times \R^d$ compact, there exists compact set $\cK_t \subset \R^d \times \R^d$ such that $(\hat X_t^\epsilon, \hat V_t^\epsilon) \in \cK_0$ implies that $(\hat X_0^\epsilon, \hat V_0^\epsilon) \in \cK_t$.
 \begin{lemma}\label{lem:compactsupport} Fix $\epsilon \in [0, 1]$. Then the following hold.\\
 (I) For $f \in C_c(\R^d \times \R^d)$, we have that $\supp (\hat \cP^\epsilon_tf)$ is compact for $t \in \R_+$. \\
 (II) Let $f \in C_0(\R^d \times \R^d)$. Then $\lim_{t \rightarrow 0}\| \hat \cP_t^\epsilon f - f \|_\infty = 0 $.
 \end{lemma}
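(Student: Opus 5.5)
Part (I) rests on the fact that the approximating PDMP $(\hat X^\epsilon,\hat V^\epsilon)$ has no refresh component. The speed $|\hat V^\epsilon_t|$ is therefore conserved: every jump is a reflection $R_u$, which is an isometry, and between jumps the velocity is constant. As a consequence $|\hat X^\epsilon_t-\hat X^\epsilon_0|\le t|\hat V^\epsilon_0|$ and $|\hat V^\epsilon_t|=|\hat V^\epsilon_0|$ almost surely, for every starting point.

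Fix $f\in C_c(\R^d\times\R^d)$ with $\supp(f)\subseteq\cK_0\coloneqq\{|x|\le a,\ |v|\le b\}$. Suppose $(x,v)$ satisfies $|v|>b$ or $|x|>a+tb$. If $|v|>b$, then $|\hat V^\epsilon_t|=|v|>b$. If instead $|v|\le b$ and $|x|>a+tb$, then $|\hat X^\epsilon_t|\ge |x|-t|v|>a$. In either case $(\hat X_t^\epsilon,\hat V_t^\epsilon)\notin\cK_0$ almost surely, so $\hat\cP^\epsilon_t f(x,v)=0$. Hence $\supp(\hat\cP_t^\epsilon f)\subseteq\cK_t\coloneqq\{|x|\le a+tb,\ |v|\le b\}$, which is compact. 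This argument uses neither Feller continuity nor any assumption beyond non-explosion (the preceding lemma), so it is valid for all $\epsilon\in[0,1]$, including $\epsilon=0$.

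For (II), I first treat $f\in C_c(\R^d\times\R^d)$ and then pass to $C_0$ by density. Since $\hat\cP^\epsilon_t$ is a contraction in $\|\cdot\|_\infty$, the estimate
$\|\hat\cP^\epsilon_t f-f\|_\infty\le 2\|f-g\|_\infty+\|\hat\cP_t^\epsilon g-g\|_\infty$ with $g\in C_c$ gives the $C_0$ case.

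Take $f\in C_c$ supported in $\cK_0$ as above and let $t\le 1$. By part (I), $\hat\cP_t^\epsilon f-f$ vanishes outside the compact set $\cK_1$, so it suffices to bound $\sup_{(x,v)\in\cK_1}|\E_{(x,v)}f(\hat X_t^\epsilon,\hat V^\epsilon_t)-f(x,v)|$.

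On the event that no bounce occurs in $[0,t]$, the state is $(x+tv,v)$. The difference $|f(x+tv,v)-f(x,v)|$ is at most $\omega_f(tb)$, where $\omega_f$ is the modulus of uniform continuity of $f$. On the complement the difference is at most $2\|f\|_\infty$.

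To control the complement, note that starting from $\cK_1$ the path stays for $s\le 1$ in the compact set $\cK'\coloneqq\{|x|\le a+2b,\ |v|\le b\}$, by speed conservation. Since $\nabla_x H$ is continuous, $\lambda_\rb^\epsilon\le\lambda_\rb$ is bounded on $\cK'$ by some constant $\Lambda$. The bounce counting process has intensity at most $\Lambda$ along the path, so $\P(N^{\rb,\epsilon}_t\ge1)\le 1-e^{-\Lambda t}$; I would make this rigorous via the thinning construction of the PDMP or via the compensator, $\E N_t\le \Lambda t$. Altogether
\[
\|\hat\cP^\epsilon_t f-f\|_\infty\le \omega_f(tb)+2\|f\|_\infty\Lambda t\to0 .
\]

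The main point needing care is this uniform-in-initial-condition bound on the first bounce probability. It relies on the compactness furnished by speed conservation, which is exactly where the absence of refreshments is used; with refreshments the velocity would not stay bounded.
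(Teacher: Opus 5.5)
Your proposal is correct and follows essentially the same route as the paper. Part (I) uses speed conservation to confine the preimage of $\supp f$ to a compact set; your set $\{|x|\le a+tb,\ |v|\le b\}$ is just a sharper version of the paper's $\cK_t$. Part (II) splits into no-bounce and at-least-one-bounce events, bounded respectively by uniform continuity of $f$ along $x\mapsto x+tv$ and by $2\|f\|_\infty(1-e^{-\Lambda t})$ with $\Lambda$ the supremum of the rate on a compact set, and then passes to $C_0$ by density, exactly as in the paper.
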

 \begin{proof}
 Let $\epsilon \in [0, 1]$.
     Initially, assume $f \in C_c({\R^d \times \R^d})$. Let $R>0$ satisfy  $\supp (f) \subseteq \{(x, v ) \in \R^d \times \R^d:\max(|x|, |v|) \leq R\}$.
     Note that the particle $(\hat X^\epsilon, \hat V^\epsilon)$ moves with constant speed. Fix $t > 0$. Set $\cK_t \coloneqq \{(x, v) \in \R^d \times \R^d : |x| \leq (3/2 + t)R  \text{ and } |v| \leq R\}$.  Consider the following event.
     \begin{align*}\{(\hat X_0^\epsilon, \hat V_0^\epsilon) \notin \cK_t\} &\cap \{(\hat X_t^\epsilon, \hat V_t^\epsilon) \in \supp f \} \subseteq  \{(\hat X_0^\epsilon, \hat V_0^\epsilon) \notin \cK_t\} \cap \{\max(|\hat X_t^\epsilon| , |\hat V_t^\epsilon|) \leq R \}\\
     &= \{(\hat X_0^\epsilon, \hat V_0^\epsilon) \notin \cK_t\} \cap \{\max(|\hat X_t^\epsilon| , |\hat V_0^\epsilon|) \leq R \},
     \end{align*}
     where the last equality holds because $|\hat V_t^\epsilon| = |\hat V_0^\epsilon|$. From the definition of $\cK_t$, and 
     the fact   $\{|\hat V_0^\epsilon| > R\} \cap \{\max(|\hat X_t^\epsilon| , |\hat V_0^\epsilon|) \leq R \} = \emptyset$, we have 
     \begin{align*}
     \{(\hat X_0^\epsilon, \hat V_0^\epsilon) \notin \cK_t\}\cap \{(\hat X_t^\epsilon, \hat V_t^\epsilon) \in \supp f \}&=   \{|\hat X_0^\epsilon| > (3/2+t)R\}  \cap \{\max(|\hat X_t^\epsilon| , |\hat V_0^\epsilon|) \leq R \}\\
     &\subseteq \{|\hat X_0^\epsilon| > (3/2+t)R\}  \cap \{\max(|\hat X_0^\epsilon| - |\hat V_0^\epsilon| t  , |\hat V_0^\epsilon|) \leq R \}\\
     &\subseteq \{|\hat X_0^\epsilon| > (3/2+t)R\}  \cap \{|\hat X_0^\epsilon| \leq R(1 + t) \}=\emptyset.
     \end{align*}
     Therefore, $\supp(\hat \cP_t^\epsilon f) \subseteq \cK_t$ . This proves part (I). 
     
     Note that for $t \in [0, 1]$, $\supp(\hat \cP_t^\epsilon f) \subseteq \cK_1 $.
     For $(x, v) \in \R^d \times \R^d$, consider the quantity
     \begin{equation}
     \begin{aligned}
         \hat \cP_t^\epsilon f(x, v) - f(x, v)& = \E_{(x, v) }[\2{N_t^{\rb, \epsilon} > 0}(f(\hat X_t^\epsilon, \hat V_t^\epsilon) - f(x, v)) ]\\
         &+ \E_{(x, v) }[\2{N_t^{\rb, \epsilon} = 0}(f(\hat X_t^\epsilon, \hat V_t^\epsilon) - f(x, v)) ].  
     \end{aligned}\label{eq:inter1801}
     \end{equation}
     Note that both of the terms above have their supports contained in $\cK_1$ individually for the same reason as mentioned earlier.
     Let $\tilde \cK_1 \coloneqq \{(x, v ): |x|\leq 7R/2,\> |v|\leq R\}$.
     Note that  $(\hat X_s^\epsilon, \hat V_s^\epsilon)_{s \in [0, t]} \subseteq \tilde \cK_1 $ for $t \in [0, 1]$ and  all initial states $(\hat X_0^\epsilon, \hat V_0^\epsilon) \in \cK_1\subset \tilde \cK_1$. Let $ \lambda_{\tilde \cK_1} \coloneqq \sup_{(x, v) \in \tilde \cK_1} \lambda_\rb^\epsilon(x, v)$ which is finite as $\lambda_\rb^\epsilon$ is locally bounded. Therefore, 
     \begin{align*}
         \sup_{(x, v) \in \R^d \times \R^d }\bigl | \E_{(x, v) }[\2{N_t^{\rb, \epsilon} > 0}(f(\hat X_t^\epsilon, \hat V_t^\epsilon) - f(x, v)) ] \bigr | &=  \sup_{(x, v) \in \cK_1 }\bigl | \E_{(x, v) }[\2{N_t^{\rb, \epsilon} > 0}(f(\hat X_t^\epsilon, \hat V_t^\epsilon) - f(x, v)) ] \bigr |\\
         &\leq  2\|f\|_\infty (1 - \exp(-\lambda_{\tilde \cK_1}t)) .\numberthis \label{eq:inter1901}
     \end{align*}
     The second term in~\eqref{eq:inter1801} is uniformly bounded in the following way.
     \begin{align}
         \sup_{(x, v) \in \R^d \times \R^d }\bigl | \E_{(x, v) }[\2{N_t^{\rb, \epsilon} = 0}(f(\hat X_t^\epsilon, \hat V_t^\epsilon) - f(x, v)) ] \bigr | \leq \sup_{(x, v) \in \R^d \times \R^d } | f(x + vt, v ) - f(x, v) | \label{eq:inter2001}
     \end{align}
     Note that the right-hand side in the above tends to zero as $t \rightarrow 0$ due to uniform continuity of $f \in C_c(\R^d \times \R^d)$.
     Putting~\eqref{eq:inter1801}, ~\eqref{eq:inter1901} and~\eqref{eq:inter2001} together, we get that $\lim_{t \rightarrow 0}\|\hat\cP_t^\epsilon f -f\|_\infty = 0$.

      As the space $(\{f \in C_b(\R^d \times \R^d) : \lim_{t \rightarrow 0}\|\hat\cP_t^\epsilon f - f \|_\infty = 0\}, \|\cdot \|_\infty)$ is Banach~\cite[p.~28]{davis_markov_2018}, and $C_0(\R^d \times \R^d)$ is the closure of $C_c(\R^d \times \R^d)$ in the uniform norm $\|\cdot \|_\infty$, we have the result for $f \in C_0(\R^d \times \R^d)$.
 \end{proof}
 Next, we have the following lemma on regularity of the semigroup.
 \begin{lemma}\label{lem:feller}
     (I)  Let $\epsilon \in (0, 1]$ and $i \in \{0, 1\}$. Let $f \in C^i(\R^d \times \R^d)$. Then $\hat \cP^\epsilon_t f \in C^i(\R^d \times \R^d)$ for all $t \geq 0$. If $f \in C_c^i(\R^d \times \R^d)$, then $\hat \cP_t^\epsilon f \in C_c^i(\R^d \times \R^d)$.\\
     (II) Let $\epsilon \in [0, 1]$. Let $f \in C_0(\R^d \times \R^d)$. Then $\|\hat\cP_t^0 f - \hat \cP_t^\epsilon f \|_\infty \leq 4 t\epsilon \|f \|_\infty$ for all $t \geq 0$.\\
     (III) Let $\epsilon \in [0, 1]$. Let $f \in C_0(\R^d \times \R^d)$. Then $\hat \cP_t^\epsilon f \in C_0(\R^d \times \R^d)$ for all $t \geq 0$.
 \end{lemma}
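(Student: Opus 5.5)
We prove the three parts in the order (II), (III), (I). Parts (II) and (III) are soft consequences of a coupling and of Lemma~\ref{lem:compactsupport}. Part (I) needs a genuine differentiability argument for a PDMP with smooth rates.

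\textbf{Part (II).} We couple $(\hat X^0,\hat V^0)$ and $(\hat X^\epsilon,\hat V^\epsilon)$ from the same initial state $(x,v)$. Both processes follow the same linear flow, and at each bounce both use the same reflection $R_{\nabla_x H}$. They differ only in their bounce intensities. Writing $a=\langle \nabla_x H,v\rangle$, an elementary computation from~\eqref{eq:boundce_rate_epsilon} gives
\[
0\le a_+-\lambda_\rb^\epsilon\le 2\epsilon .
\]
Indeed, for $a\le\epsilon$ the difference is $a_+\le\epsilon$. For $a>\epsilon$, setting $b=a-\epsilon$, the difference is $\epsilon+b-b^2/(\epsilon+b)=\epsilon+\epsilon b/(\epsilon+b)\le 2\epsilon$.

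We then use a thinning construction. Both processes are driven by a single Poisson random measure on $\R_+\times\R_+$. A point $(s,u)$ triggers a bounce of process $j\in\{0,\epsilon\}$ if $u\le\lambda_\rb^j$ evaluated at that process's current state. The two paths coincide until the first time a point falls in the strip between $\lambda_\rb^\epsilon$ and $\lambda_\rb^0$ along the common path. The strip has width at most $2\epsilon$, so the decoupling time exceeds $t$ with probability at least $e^{-2\epsilon t}\ge 1-2\epsilon t$. Hence
\[
|\hat\cP^0_tf-\hat\cP^\epsilon_tf|\le 2\|f\|_\infty\,\P(\text{decoupled by }t)\le 4t\epsilon\|f\|_\infty .
\]
This bound uses only that $f$ is bounded.

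\textbf{Part (III).} Take $f\in C_c$. By Lemma~\ref{lem:compactsupport}(I), $\hat\cP^\epsilon_tf$ has compact support. For continuity with $\epsilon>0$ we use part (I) with $i=0$, so $\hat\cP^\epsilon_tf\in C_c$. For $\epsilon=0$, part (II) shows that $\hat\cP^{\epsilon'}_tf\to\hat\cP^0_tf$ uniformly as $\epsilon'\to0$, and a uniform limit of $C_c$ functions lies in $C_0$. Finally, the operators are contractions in $\|\cdot\|_\infty$ and $C_c$ is dense in $C_0$, which extends the conclusion to all $f\in C_0$.

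\textbf{Part (I).} Fix $\epsilon>0$. Since $\lambda_\rb^\epsilon$ is $C^1$ and the reflection map $(x,v)\mapsto R_{\nabla_xH(x,v)}v$ is smooth away from the set $\{\nabla_xH=0\}$, we use the standard series (Dyson--Duhamel) representation of a PDMP semigroup without refreshment. Let $\Lambda_s(x,v)=\int_0^s\lambda_\rb^\epsilon(x+rv,v)\,\ud r$ and let $\phi_s(x,v)=(x+sv,v)$. Then
\[
\hat\cP^\epsilon_tf(x,v)=e^{-\Lambda_t(x,v)}f(\phi_t(x,v))+\int_0^t e^{-\Lambda_s(x,v)}\lambda_\rb^\epsilon(\phi_s(x,v))\,\hat\cP^\epsilon_{t-s}f\bigl(x+sv,R\,v\bigr)\,\ud s .
\]
We iterate this formula. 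Local boundedness of $\lambda_\rb^\epsilon$ on the constant-speed sets from the proof of Lemma~\ref{lem:compactsupport} makes the $n$-th term bounded by $(Ct)^n/n!$ locally uniformly, so the series converges. Each term is $C^i$, and its derivatives are likewise locally bounded by factorially decaying constants, so the sum is $C^i$. Compact support when $f\in C^i_c$ again comes from Lemma~\ref{lem:compactsupport}(I).

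\textbf{Main obstacle.} The hardest step is the regularity of the post-jump map where $\nabla_xH=0$, since there $R_0=I$ and the reflection is discontinuous. The mechanism I would exploit is that $\lambda_\rb^\epsilon$ vanishes on the neighbourhood $\{\langle\nabla_xH,v\rangle\le\epsilon\}$, which contains that set. So in the jump term the product $\lambda_\rb^\epsilon\cdot(\,\cdot\circ R)$ is $C^1$, being identically zero near the degenerate set. This is exactly why the smoothing $\epsilon>0$ is needed.
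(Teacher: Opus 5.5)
Your proposal is correct. Part (III) is the same argument as the paper's: approximate by $C_c$ functions, use (I) with $i=0$ for $\epsilon>0$ and the uniform estimate (II) for $\epsilon=0$, then use contraction and the closedness of $C_0$. Your unusual ordering is harmless, since your proof of (I) does not use (III).

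For (I) and (II) the paper does not argue directly, and here your route differs.
\begin{itemize}
\item For (II), the paper notes $\sup|\lambda_\rb-\lambda_\rb^\epsilon|\le 2\epsilon$ and invokes Thm.~11 of Durmus--Guillin--Monmarch\'e.
\item For (I), it asserts that their assumption A3 holds ``in a way very similar to'' their Prop.~23, and then applies their Thm.~17.
\end{itemize}
You replace both citations with self-contained arguments.
\begin{itemize}
\item For (II) you use a synchronous thinning coupling. The cleanest justification of the decoupling bound is via the compensator, $\P(\tau\le t)\le\E\int_0^{t\wedge\tau}(\lambda_\rb^0-\lambda_\rb^\epsilon)(Z_s)\,\ud s\le 2\epsilon t$. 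This is needed because the common path $Z$ is driven by the same Poisson measure.
\item For (I) you use a first-jump (Dyson) expansion. The chain-rule bounds are of order $C^{2n+1}t^n/n!$ on the compact sets reachable at conserved speed. These local bounds also cover unbounded $f\in C^i$.
\end{itemize}

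What your route buys is transparency: it makes explicit two facts hidden inside the citation.
\begin{itemize}
\item $\lambda_\rb^\epsilon$ vanishes on the open neighbourhood $\{\langle\nabla_xH,v\rangle<\epsilon\}$ of the set $\{\nabla_xH=0\}$, where the reflection is discontinuous. Hence $g\mapsto\lambda_\rb^\epsilon\, g(x,R_{\nabla_xH(x,v)}v)$ maps $C^1$ to $C^1$.
\item On $\{\lambda_\rb^\epsilon>0\}$ one has $|\nabla_xH|>\epsilon/|v|$. So $\lambda_\rb^\epsilon$ times the Jacobian of the reflection map is locally bounded, which keeps your derivative series summable.
\end{itemize}
Note that $\epsilon>0$ is used twice here: for the vanishing on a neighbourhood of the degenerate set, and for $\lambda_\rb^\epsilon\in C^1$. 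The latter is the reason the paper emphasises, since $\lambda_\rb^0$ has a kink. The paper's route is much shorter but leaves these verifications to the reader.
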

 \begin{proof} \underline{Proof of part (I):}
 For $\epsilon \in (0, 1]$, it can be seen that $\lambda_\rb^\epsilon \in C^1(\R^d \times \R^d)$ as $U$ and $\sigma$ are smooth. Moreover, for $\epsilon \in (0, 1]$, 
      the PDMP $(\hat X^\epsilon, \hat V^\epsilon)$ satisfies A3 of~\cite{durmus_piecewise_2021}, in a way very similar to~\cite[Prop.23]{durmus_piecewise_2021}. Thus, using~\cite[Thm 17]{durmus_piecewise_2021}, we have the first statement of part (I) of this lemma. The second statement of part (I) follows from Lemma~\ref{lem:compactsupport}(I).

      \noindent \underline{Proof of part (II):} Since  $\sup_{(x, y) \in \R^d \times \R^d} |\lambda_\rb (x, y ) - \lambda_\rb^\epsilon(x, y)| \leq 2 \epsilon$, Part (II) follows by~\cite[Thm~11]{durmus_piecewise_2021}.
      
      \noindent \underline{Proof of part (III):} Assume first $\epsilon \in (0, 1]$. Recall that $C_0(\R^d \times \R^d)$ is the closure of $C_c(\R^d \times \R^d)$ in the norm $\| \cdot \|_\infty$.
      For $\delta > 0$ and $f \in C_0(\R^d \times \R^d)$, choose $f_\delta \in C_c(\R^d \times \R^d)$ such that $\|f - f_\delta\|_\infty < \delta$.
       We have that $\|\hat\cP_t^\epsilon f - \hat \cP_t^\epsilon f_\delta \|_\infty = \|\hat\cP_t^\epsilon( f -  f_\delta) \|_\infty \leq \| f -  f_\delta \|_\infty < \delta.$
       Note that $\hat\cP_t^\epsilon f_\delta \in C_c(\R^d \times \R^d)$ because of part (I) of this lemma (which may be applied as $\epsilon > 0$). Thus, as $\delta > 0$ can be chosen arbitrarily small, we have that $\hat \cP_t^\epsilon f$ is in the closure of $C_c(\R^d \times \R^d)$ in the uniform norm $\|\cdot \|_\infty$. In other words, $\hat\cP_t^\epsilon f \in C_0(\R^d \times \R^d)$. By using part (II) of this lemma, using a similar approximation technique, utilising the fact that $C_0(\R^d \times \R^d)$ is closed, we get the result for the case  $\epsilon = 0$.
 \end{proof}
 
Using Lemma~\ref{lem:compactsupport}(II) and Lemma~\ref{lem:feller}(III) and, $\hat \cP^\epsilon$ is a strongly continuous contraction semigroup on the Banach space $(C_0(\R^d \times \R^d), \|\cdot \|_\infty)$ for $\epsilon \in [0, 1]$. Define the strong generator $(\hat\cL^\epsilon,\mathrm{Dom}(\hat \cL^\epsilon))$ by 
$\hat \cL^\epsilon f \coloneqq \lim_{t \rightarrow 0 }(\hat \cP_t^\epsilon f - f  )/ t $ for $f \in \mathrm{Dom}(\hat \cL^\epsilon)$, where 
$$\mathrm{Dom}(\hat \cL^\epsilon) \coloneqq \{ f \in C_0(\R^d \times \R^d) : \exists h \in C_0(\R^d \times \R^d) \text{ such that }\lim_{t \rightarrow 0 }\|(\hat\cP_t^\epsilon f - f)/ t - h \|_\infty = 0 \}.$$ 

\begin{lemma}\label{lem:core1}
(I) $C_c^1(\R^d \times \R^d) \subseteq \mathrm{Dom}(\hat \cL^\epsilon)$ for $\epsilon \in [0, 1]$.\\
(II) $C_c^1(\R^d \times \R^d)$ is a core of $(\hat \cL^0,\mathrm{Dom}(\hat \cL^0))$.\\
(III) For $f \in C_c^1(\R^d \times \R^d)$, we have $\hat \cL^0 f = \Ltr f$.\\
(IV) $\mu_\sigma$ is an invariant distribution of $(\hat \cP_t^0)_{t \in \R_+}$.
\end{lemma}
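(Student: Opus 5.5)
I will prove the four parts in the order (I), (III), (II), (IV), using the approximating semigroups $\hat\cP^\epsilon$ and Lemmas~\ref{lem:compactsupport} and~\ref{lem:feller}.

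\textbf{Parts (I) and (III).} Fix $f\in C_c^1(\R^d\times\R^d)$ with support in $\{\max(|x|,|v|)\le R\}$. By the support-propagation argument of Lemma~\ref{lem:compactsupport}, for $t\in[0,1]$ both $\hat\cP^\epsilon_t f - f$ and the candidate generator
\[
\hat\cL^\epsilon_{\mathrm{cand}} f(x,v)\coloneqq\langle v,\nabla_x f\rangle+\lambda^\epsilon_\rb(x,v)\bigl[f(x,R_{\nabla_xH}v)-f(x,v)\bigr]
\]
are supported in the compact set $\cK_1$. (Note that $f(x,R v)\neq 0$ forces $|v|\le R$, since reflections preserve speed.) Split $\hat\cP_t^\epsilon f-f$ according to whether $N^{\rb,\epsilon}_t$ equals $0$, equals $1$, or is at least $2$.
\begin{itemize}
\item The event $\{N^{\rb,\epsilon}_t\ge 2\}$ has probability $O(t^2)$, uniformly over starting points in $\cK_1$, because $\lambda^\epsilon_\rb\le\lambda_{\tilde\cK_1}$ along paths started there.
\item On $\{N^{\rb,\epsilon}_t=0\}$, the contribution $e^{-\int_0^t\lambda^\epsilon_\rb}\,[f(x+vt,v)-f(x,v)]$ divided by $t$ converges to $\langle v,\nabla_xf\rangle$ uniformly. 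This uses the uniform continuity of $\nabla_x f$ and boundedness of $\lambda^\epsilon_\rb$ on $\tilde\cK_1$.
\item The single-jump term divided by $t$ converges uniformly to $\lambda_\rb^\epsilon[f(x,R v)-f(x,v)]$. This uses continuity of $\lambda_\rb^\epsilon$ and of $(x,v)\mapsto f(x,R_{\nabla_xH(x,v)}v)$. The latter is continuous where $\nabla_xH\neq0$, and where $\nabla_xH=0$ the rate vanishes, so the product is continuous. Uniformity holds on compacts, and outside $\cK_1$ both sides vanish.
\end{itemize}
This gives $C_c^1\subseteq\mathrm{Dom}(\hat\cL^\epsilon)$ for all $\epsilon\in[0,1]$. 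For $\epsilon=0$ the limit is exactly $\Ltr f$, which proves (III).

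\textbf{Part (II), the main obstacle.} First, for $\epsilon\in(0,1]$, Lemma~\ref{lem:feller}(I) shows that $\hat\cP_t^\epsilon$ maps $C_c^1$ into $C_c^1\subseteq\mathrm{Dom}(\hat\cL^\epsilon)$. Since $C_c^1$ is dense in $C_0$, the invariant-core criterion (\cite[Ch.~1 Prop.~3.3]{ethier_markov_1986}) shows that $C_c^1$ is a core for $\hat\cL^\epsilon$.

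To pass to $\epsilon=0$, take $g\in\mathrm{Dom}(\hat\cL^0)$ and set $h=(1-\hat\cL^0)g$. It suffices to show that $(1-\hat\cL^0)(C_c^1)$ is dense in $C_0$. Using the core property at $\epsilon>0$, choose $f_\epsilon\in C_c^1$ with $\|(1-\hat\cL^\epsilon)f_\epsilon-h\|_\infty$ small. We then need to bound
\[
\|(\hat\cL^0-\hat\cL^\epsilon)f_\epsilon\|_\infty\le 2\epsilon\|f_\epsilon\|_\infty,
\]
using $|\lambda_\rb-\lambda^\epsilon_\rb|\le2\epsilon$. Since $\|f_\epsilon\|_\infty$ is controlled by $\|(1-\hat\cL^\epsilon)f_\epsilon\|_\infty\le\|h\|_\infty+1$ (the resolvent of a contraction semigroup is a contraction), the error is $O(\epsilon)$. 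Hence $(1-\hat\cL^0)(C_c^1)$ is dense, and since $\hat\cL^0$ is closed this gives the core property. The delicate points are justifying that the bound on $(\hat\cL^0-\hat\cL^\epsilon)f$ holds on the domain as computed in (I), and checking the dissipativity estimate; Lemma~\ref{lem:feller}(II) can serve as an alternative route through resolvents, $\|(1-\hat\cL^0)^{-1}-(1-\hat\cL^\epsilon)^{-1}\|\le 4\epsilon$.

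\textbf{Part (IV).} By the core property in (II) and \cite[Ch.~4 Prop.~9.2]{ethier_markov_1986}, it suffices to show $\int\Ltr f\,\ud\mu_\sigma=0$ for $f\in C_c^1$.
\begin{itemize}
\item For the transport term, integrate by parts in $x$ against $e^{-H}$ to get $\int\langle v,\nabla_xH\rangle f\,\ud\mu_\sigma$.
\item For the jump term, change variables $v\mapsto R_{\nabla_xH(x,v)}v$ at fixed $x$. This map is an involution that preserves $|v|$, hence preserves $H$, and it flips the sign of $\langle v,\nabla_xH\rangle$. Its Jacobian needs checking, since $\nabla_xH$ depends on $v$ only through $|v|$, and the map is a reflection on each sphere; the Jacobian is therefore $1$.
\end{itemize}
After the change of variables, the jump term becomes
\[
\int\bigl(\langle-\nabla_xH,v\rangle_+-\langle\nabla_xH,v\rangle_+\bigr)f\,\ud\mu_\sigma=-\int\langle\nabla_xH,v\rangle f\,\ud\mu_\sigma,
\]
which cancels the transport term.
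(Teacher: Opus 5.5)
Your proof is correct and follows the same route as the paper. For (I) and (III) you do a short-time analysis of the dynamics, where the paper invokes It\^o's formula. For (II) you approximate from the $\epsilon>0$ cores, and your explicit range-density argument is what the paper imports from Durmus--Guillin--Monmarch\'e via Lemma~\ref{lem:feller}(I)--(II). For (IV) you use the core together with $\int\Ltr f\,\ud\mu_\sigma=\langle\Ltr^\star 1,f\rangle_{L^2(\mu_\sigma)}=0$; your integration-by-parts and reflection computation is exactly the proof of~\eqref{eq:L3} with $h=1$. The only slip is cosmetic: $|(\hat\cL^0-\hat\cL^\epsilon)f|\le|\lambda_\rb-\lambda_\rb^\epsilon|\,|f(x,R_{\nabla_xH}v)-f(x,v)|\le 4\epsilon\|f\|_\infty$ rather than $2\epsilon\|f\|_\infty$, which does not affect the argument.
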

\begin{proof}
Parts (I) and (III) follow by an application of the It\^o formula (e.g.,~\cite[Ch.2 Thm~32]{protter_stochastic_2004}), and using the fact that $f \in C_c^1(\R^d \times \R^d)$ is uniformly continuous and has bounded first derivatives that are also uniformly continuous.

     Using Lemma~\ref{lem:feller}(I), (II) and~\cite[Prop.27]{durmus_piecewise_2021}, we have that $C_c^1(\R^d \times \R^d)$ is a core of $(\hat \cL^0,\mathrm{Dom}(\hat \cL^0))$, hence we get part (II). Part (IV) follows from the fact that for $f \in C_c^1(\R^d \times \R^d)$ (which forms a core), we have that $\int_{\R^d \times \R^d}  \hat\cL^0 f \,\ud\mu_\sigma= \langle 1, \Ltr f \rangle_{L^2(\mu_\sigma)} = \langle \Ltr^\star 1, f \rangle_{L^2(\mu_\sigma)} = 0$ using~\eqref{eq:L3}.
\end{proof}
\begin{remark}The reader should note that the operator $\cL_0$ defined in section~\ref{sec:proof3.2} is not related to $(\hat \cL^0,\mathrm{Dom}(\hat \cL^0))$ as defined here.\end{remark} 
Due to Lemma~\ref{lem:core1}(IV), the semigroup $\hat \cP^0$, given by $\hat \cP^0_t f(x, v) = \E_{(x, v)} [f(\hat X_t^0, \hat V_t^0)]$ for $f \in C_0(\R^d \times \R^d)$ can be uniquely extended to a strongly continuous contraction semigroup on $L^2(\mu_\sigma)$.
Define the $L^2(\mu_\sigma)$-generator corresponding to $\hat \cP^0$ by $(\bar \cL^0, \mathrm{Dom}( \bar \cL^0))$. For operator $A$ defined on space $\mathrm{Dom}(A)$, let $\mathrm{Ran}(A|_D)$ denote the range of $A$ restricted to $D \subset \mathrm{Dom}(A)$.

\begin{lemma}\label{lem:core2} (I) $\bar \cL^0 f = \Ltr f$ for $f \in C_c^1(\R^d \times \R^d)$.\\
(II) $C_c^1(\R^d \times \R^d)$ is a core of  $(\bar \cL^0, \mathrm{Dom}( \bar \cL^0))$.\\
(III) The operator $\cL= \lambda_\rr \Lv + \Ltr$ in~\eqref{eq:cL} with the domain $C_c^1(\R^d \times \R^d)$ in $L^2(\mu_\sigma)$ generates a strongly continuous contraction semigroup on $L^2(\mu_\sigma)$.
\end{lemma}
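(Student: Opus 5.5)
For part (I), fix $f \in C_c^1(\R^d \times \R^d)$. By Lemma~\ref{lem:core1}(I),(III), $f \in \mathrm{Dom}(\hat\cL^0)$ and $\hat\cL^0 f = \Ltr f$, with $\|(\hat\cP_t^0 f - f)/t - \Ltr f\|_\infty \to 0$ as $t\to 0$. By Lemma~\ref{lem:compactsupport}(I), for $t\in[0,1]$ the functions $\hat\cP_t^0 f - f$ are supported in the fixed compact set $\cK_1$. The function $\Ltr f$ is also supported there, since $f$ has compact support and the bounce jump preserves $|v|$. Uniform convergence on a fixed compact set against the probability measure $\mu_\sigma$ then gives convergence in $L^2(\mu_\sigma)$. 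Hence $f \in \mathrm{Dom}(\bar\cL^0)$ and $\bar\cL^0 f = \Ltr f$.

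For part (II), I would use the standard fact that a subspace $D$ of the domain of a generator of a contraction semigroup is a core if $D$ is dense in the underlying space and (i) $D$ is invariant under the semigroup, or (ii) $\mathrm{Ran}((\lambda - A)|_D)$ is dense for some $\lambda>0$ \cite[Ch.~1 Prop.~3.1, Prop.~3.3]{ethier_markov_1986}. I would use (ii). Set $D_0\coloneqq \mathrm{Dom}(\hat\cL^0)$. Since $C_c^1$ is a core of $\hat\cL^0$ in $C_0$ by Lemma~\ref{lem:core1}(II), $(1-\hat\cL^0)(C_c^1)$ is $\|\cdot\|_\infty$-dense in $(1-\hat\cL^0)(D_0)=C_0(\R^d\times\R^d)$. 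Because $\mu_\sigma$ is a probability measure, $\|\cdot\|_{L^2(\mu_\sigma)}\le\|\cdot\|_\infty$, so $(1-\hat\cL^0)(C_c^1) = (1-\Ltr)(C_c^1)$ is $L^2(\mu_\sigma)$-dense in $C_0$. Finally, $C_0\supseteq C_c$ is dense in $L^2(\mu_\sigma)$. Therefore $\mathrm{Ran}((1-\bar\cL^0)|_{C_c^1})$ is dense in $L^2(\mu_\sigma)$, and $C_c^1$ is a core.

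The one technical point in this step is the passage from $C_0$-generator to $L^2$-generator: I need $D_0\subseteq\mathrm{Dom}(\bar\cL^0)$ with the two generators agreeing there, so that the closure in $L^2$ is consistent. This follows as in part (I), because $\|\cdot\|_{L^2(\mu_\sigma)}\le \|\cdot\|_\infty$ makes $C_0$-convergence of difference quotients imply $L^2$-convergence. The density argument then closes the proof.

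For part (III), write $\lambda_\rr\Lv = \lambda_\rr(\Pi - I)$. The operator $\Pi$ is the conditional expectation onto functions of $x$ in $L^2(\mu_\sigma)$, hence an orthogonal projection with norm $1$. So $\lambda_\rr \Lv$ is a bounded dissipative operator of norm at most $2\lambda_\rr$, and it is defined on all of $L^2(\mu_\sigma)$. By part (II), the closure of $\Ltr|_{C_c^1}$ equals $\bar\cL^0$, which generates a contraction semigroup. The bounded perturbation theorem \cite[Ch.~1 Thm.~7.1]{ethier_markov_1986} then shows that $\bar\cL^0+\lambda_\rr\Lv$ generates a strongly continuous semigroup. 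Dissipativity of $\lambda_\rr\Lv$ makes the sum dissipative, so the semigroup is contractive. The closure of $\cL$ on $C_c^1$ coincides with $\bar\cL^0+\lambda_\rr\Lv$ because the perturbation is bounded, and $\cL$ maps $C_c^1$ into $L^2(\mu_\sigma)$ as noted after~\eqref{eq:defcE}. The main obstacle is part (II), specifically the $C_0$ versus $L^2$ transfer; the rest is routine.
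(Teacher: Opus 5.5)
Your proposal is correct and follows the paper's proof essentially step by step. Part (I) uses $\|\cdot\|_{L^2(\mu_\sigma)}\le\|\cdot\|_\infty$ together with Lemma~\ref{lem:core1}(I),(III). Part (II) transfers the density of $\mathrm{Ran}(k-\Ltr|_{C_c^1})$ from $C_0(\R^d\times\R^d)$ to $L^2(\mu_\sigma)$ and then invokes \cite[Ch.~1 Prop.~3.1]{ethier_markov_1986}. Part (III) applies the dissipative bounded-perturbation theorem \cite[Ch.~1 Thm.~7.1]{ethier_markov_1986}.

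The compact-support argument in (I) and the extension to all of $\mathrm{Dom}(\hat\cL^0)$ in (II) are harmless but unnecessary. Since $\mu_\sigma$ is a probability measure, the sup norm already dominates the $L^2(\mu_\sigma)$ norm globally, and the range-density step only needs part (I) on $C_c^1$.
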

\begin{proof}
Part (I) follows from Lemma~\ref{lem:core1}(I) using the fact that $\|\cdot \|_{L^2(\mu_\sigma)} \leq \|\cdot \|_\infty$.

\noindent \underline{Proof of part (II):}
Note that $\|f \|_{L^2(\mu_\sigma)} \leq \|f\|_\infty$ for all $f \in C(\R^d \times \R^d)$. 
    Due to Lemma~\ref{lem:core1}(II), $C_c^1(\R^d \times \R^d)$ is a core of $(\hat \cL^0,\mathrm{Dom}(\hat \cL^0))$. According to \cite[Ch.1 Prop.3.1]{ethier_markov_1986},  $\mathrm{Ran}(k - \hat \cL^0\big|_{C_c^1(\R^d \times \R^d)})$ is dense in $C_0(\R^d \times \R^d )$ in the norm $\|\cdot \|_\infty$ for some $k > 0$. Thus, due to Lemma~\ref{lem:core1}(III),  $\mathrm{Ran}(k - \Ltr|_{C_c^1(\R^d \times \R^d)}) $ is dense in $C_0(\R^d \times \R^d )$ in the $\|\cdot \|_\infty$ norm.
    Moreover, $C_0(\R^d \times \R^d)$ is dense in $L^2(\mu_\sigma)$ (in the $\|\cdot \|_{L^2(\mu_\sigma)}$ norm).
    This implies that $\mathrm{Ran}(k - \Ltr|_{C_c^1(\R^d \times \R^d)}) $ is dense in $L^2(\mu_\sigma)$.
    Using part (I) of this lemma,  $\mathrm{Ran}(k - \bar \cL^0|_{C_c^1(\R^d \times \R^d)}) $ is dense in $L^2(\mu_\sigma)$ and hence $C_c^1(\R^d \times \R^d)$ is a core of  $(\bar \cL^0, \mathrm{Dom}( \bar \cL^0))$, using~\cite[Ch.1 Prop.3.1]{ethier_markov_1986} and the fact that $C_c^1(\R^d \times \R^d)$ is dense in $L^2(\mu_\sigma)$.

\noindent \underline{Proof of part (III):} Part (III) follows from part (II) and~\cite[Ch.~1,~Thm~7.1]{ethier_markov_1986}, using the fact that due to parts (I) and (II), the $L^2(\mu_\sigma)$-closure of $(\Ltr, C_c^1(\R^d \times \R^d))$ is $\bar \cL^0 $ which generates the strongly continuous contraction semigroup $\hat \cP^0$ and that the operator $\Lv f = \Pi f - f$ is dissipative in the sense of the definition  in~\cite[Ch.1 Sec.2]{ethier_markov_1986}.
\end{proof}

Set $\cQ \coloneqq (\cQ_t)_{t \in \R_+}$ as the strongly continuous contraction semigroup generated by the $L^2(\mu_\sigma)$-closure of $(\cL, C_c^1(\R^d \times \R^d))$ as given by Lemma~\ref{lem:core2}(III). Recall the Markov semigroup $\cP = (\cP_t)_{t \in \R_+}$ as defined in Section~\ref{sec:result} given by $\cP_t f (x, v) = \E_{(x, v)} [f(X_t, V_t)]$ for $f \in C_b(\R^d \times \R^d)$ and $(x, v) \in \R^d \times \R^d$, where $(X, V) = (X_t, V_t)_{t \in \R_+}$ is the piecewise deterministic Markov process given by~\eqref{eq:TBP}.
\begin{lemma}\label{lem:Pcontraction}
    For all $f \in C^1_b(\R^d \times \R^d)$ and $t \in \R_+$, we have $\cQ_t f =  \cP_t f $ $\mu_\sigma$-a.s. Also, $(\cP_t)_{t \in \R_+} $ uniquely extends to the strongly continuous contraction semigroup $(\cQ_t)_{t \in \R_+}$ on $L^2(\mu_\sigma)$.
\end{lemma}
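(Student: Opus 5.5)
The plan is to identify $\cQ$ with $\cP$ through a perturbation (Duhamel/variation-of-constants) argument that reintroduces the refresh jumps into the refresh-free process $(\hat X^0,\hat V^0)$. We can write the TBPS as $\hat X^0,\hat V^0$ interrupted at the times of an independent Poisson process of rate $\lambda_\rr$, where the velocity is redrawn from $\kappa_{x}$.

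\textbf{Step 1: series representation of $\cP$.} Conditioning on the first refresh time gives, for $f\in C_b(\R^d\times\R^d)$ and every $(x,v)$,
\[
\cP_t f = e^{-\lambda_\rr t}\hat\cP^0_t f + \lambda_\rr\int_0^t e^{-\lambda_\rr s}\hat\cP^0_s \Pi \cP_{t-s} f\,\ud s .
\]
This uses the strong Markov property at the first jump of $N^\rr$ and the independence of $N^\rr$ from the bounce mechanism. Lemma~\ref{lem:explosiveness} ensures there is no explosion, so iterating this identity produces the Dyson--Phillips series
\[
\cP_t f=\sum_{n\ge0}S^{(n)}_t f,\qquad S^{(0)}_t=e^{-\lambda_\rr t}\hat\cP^0_t,\qquad S^{(n+1)}_t=\lambda_\rr\int_0^t S^{(0)}_s\Pi S^{(n)}_{t-s}\,\ud s .
\]
The series converges pointwise, and the $n$-th term is the contribution of paths with exactly $n$ refreshes.

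\textbf{Step 2: the same series for $\cQ$.} Lemma~\ref{lem:core2}(II) identifies the closure of $(\Ltr,C_c^1)$ with $\bar\cL^0$, the $L^2(\mu_\sigma)$-generator of $\hat\cP^0$. The perturbation $\lambda_\rr\Lv=\lambda_\rr(\Pi-I)$ is bounded on $L^2(\mu_\sigma)$, because $\Pi$ is a conditional expectation and hence a contraction. So $\cQ$ is the semigroup generated by $\bar\cL^0+\lambda_\rr(\Pi-I)$ on $\mathrm{Dom}(\bar\cL^0)$. One should check that the closure of $\cL$ on $C_c^1$ is exactly this operator; it is, since a bounded perturbation preserves cores. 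The bounded perturbation theorem, i.e.\ the Dyson--Phillips expansion in $L^2(\mu_\sigma)$ (e.g.\ \cite[Ch.~1]{ethier_markov_1986}), then gives $\cQ_t=\sum_n \tilde S^{(n)}_t$. Here $\tilde S^{(n)}$ is built by the same recursion from the $L^2$-extension of $\hat\cP^0$, and the series converges in operator norm.

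\textbf{Step 3: matching the two series.} For bounded $f$ the $L^2$-extension of $\hat\cP^0_t$ agrees $\mu_\sigma$-a.e.\ with the pointwise expectation. $\Pi$ acts identically in both settings, and all terms are bounded by $\|f\|_\infty$. By induction on $n$, $S^{(n)}_t f=\tilde S^{(n)}_t f$ holds $\mu_\sigma$-a.e.; this uses Fubini to commute the time integrals with the a.e.\ identification. Summing over $n$ gives $\cQ_t f=\cP_t f$ $\mu_\sigma$-a.e. for bounded continuous $f$, in particular for $f\in C_b^1$.

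Since $C_b^1$ is dense in $L^2(\mu_\sigma)$ and $\cQ_t$ is a contraction, it follows that $\|\cP_t f\|_{L^2(\mu_\sigma)}\le\|f\|_{L^2(\mu_\sigma)}$ on $C_b^1$. Hence $\cP_t$ extends uniquely, and the extension is $\cQ_t$. The main obstacle is Step 1: justifying the renewal identity rigorously, including measurability of $s\mapsto \hat\cP^0_s\Pi\cP_{t-s}f$. A subtlety is that $\Pi\cP_{t-s}f$ depends only on $x$ and may be only measurable. I would handle this through the construction \cite[Construction~1]{durmus_piecewise_2021}, which realizes the refresh times as an independent Poisson clock. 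An alternative would be Dynkin's formula on $C_c^1$ combined with uniqueness for the martingale problem, but that seems harder without boundedness of $\Ltr f$ in general.
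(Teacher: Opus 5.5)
Your argument is correct and follows essentially the same route as the paper. Both decompose $\cP_t f$ by the number of refresh events, conditioning on the first refresh time. Both then match this term by term, by induction, with the Dyson--Phillips expansion of $\cQ$, where $\lambda_\rr\Pi$ is treated as a bounded perturbation of the generator of $e^{-\lambda_\rr t}\hat\cP^0_t$, before concluding by density and contractivity. Your care in identifying the $L^2(\mu_\sigma)$-extension of $\hat\cP^0_s$ with the pointwise kernel on bounded measurable functions, and in using Fubini to match the time integrals, makes precise steps that the paper leaves implicit.
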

\begin{proof}
    Define the operator $A   \coloneqq \Ltr   - \lambda_\rr I$ on $C_c^1(\R^d \times \R^d)$. Note that the $L^2(\mu_\sigma)$-closure of $(\Ltr, C_c^1(\R^d \times \R^d))$ generates the strongly continuous contraction semigroup $\hat \cP^0$.  Hence, by~\cite[Ch.1 Thm 7.1]{ethier_markov_1986}, we have that the $L^2(\mu_\sigma)$-closure of $(A, C_c^1(\R^d \times \R^d))$ generates a strongly continuous contraction semigroup on $L^2(\mu_\sigma)$.
    Call this semigroup $\cS \coloneqq (\cS_t)_{t \in \R_+} $.
    Using~\cite[Ch.~III,~Thm~1.10]{engel_one-parameter_2000}, we have that $\cS_t =\sum_{n \in \N} \cS_t^{(n)}$, where $\cS_t^{(0)}f \coloneqq \hat \cP_t^{0}f$ and $\cS_t^{(n)} \coloneqq -\lambda_\rr\int_0^t \hat \cP_{t-s}^0 \cS_s^{(n - 1)} f \, \ud s $ for $n \in \N \setminus \{0\}$. Inductively, it can be seen that $\cS_t^{(n)} f = (- \lambda_\rr t)^n\hat \cP_t^0 f /(n!)$. Thus, $\cS_tf = \exp(- \lambda_\rr t) \hat \cP_t^0 f$ for $f \in L^2(\mu_\sigma)$. Hence, the closure of $(A, C_c^1(\R^d \times \R^d))$ generates the semigroup $(\exp(-\lambda_\rr t ) \hat \cP_t^0)_{t \in \R_+}$.
    
    Note that the operator $ \lambda_\rr\Pi $ is bounded on $L^2(\mu_\sigma)$ . We have that $A + \lambda_\rr \Pi = \cL$. Also, recall that $(\cQ_t)_{t \in \R_+}$ is the strongly continuous contraction semigroup generated by the $L^2(\mu_\sigma)$-closure of $(\cL, C_c^1(\R^d \times \R^d))$.
    Therefore, using~\cite[Ch.III Thm 1.10]{engel_one-parameter_2000}, we have that $\cQ_t = \sum_{n \in \N} \cQ^{(n)}_t $, where $\cQ_t^{(0)} f  \coloneqq \cS_t f $ and for $n \in \N\setminus\{0\}$, $\cQ_t^{(n)} f  \coloneqq \int_0^t \cS_{t-s} (\lambda_\rr \Pi) \cQ_s^{(n - 1)} f\, \ud s $ for $f \in L^2(\mu_\sigma)$.
    
    Moreover, for $f \in C_b^1(\R^d \times \R^d)$, the operation $\cP_t f(x, v) = \E_{(x, v)} [f(X_t, V_t)]$ can be decomposed as $\cP_t f(x, v) = \sum_{n \in \N} \cP_t^{(n)} f$ where
    $\cP_t^{(n)} f(x, v) \coloneqq \E_{(x, v)} [ \2{N^\rr_t = n }f(X_t, V_t)]$ for $n \in \N$. Therefore 
    \begin{align*}\cP_t^{(0)} f (x, v) &= \E_{(x, v)} [ \2{N^\rr_t = 0 }f(X_t, V_t)]\\
    &=  \E_{(x, v)} [ f(X_t, V_t) | N_t^\rr = 0] \P_{(x, v) }(N_t^\rr = 0) \\
    &= \exp(-\lambda_\rr t)\hat \cP^0_t f = \cS_t f = \cQ_t^{(0)} f\quad \text{for all } t \in \R_+ \text{ and } f \in C^1_b(\R^d \times \R^d).
    \end{align*}
    We proceed inductively. Assume that for some $n \in \N$, $\cP_t^{(n)} f = \cQ_t^{(n)} f$ $\mu_\sigma$-a.e. for all $t \in \R_+$. Let $\tau_{ 1}\coloneqq \inf\{t \in \R_+:N_t^\rr =  1\}$. Then we have that
    \begin{align*}
        \cP_t^{(n+1)} f(x, v) &= \E_{(x, v)} [ \2{N^\rr_t = n + 1 }f(X_t, V_t)] = \E_{(x, v)}[\E [ \2{N^\rr_t = n + 1 }f(X_t, V_t) | (\tau_1 , X_{\tau_1}, V_{\tau_1})]]\\
        &= \E_{(x, v)}[\cP_{t- \tau_1}^{(n)} f (X_{\tau_1}, V_{\tau_1})]= \E_{(x, v)}[\E[\cP_{t- \tau_1}^{(n)} f (X_{\tau_1}, V_{\tau_1})|( X_{\tau_1-}, V_{\tau_1-})]]\\
        &= \int_0^t\E_{(x, v)}\biggl[\E\bigl[\cP_{t- s}^{(n)} f (X_{s}, V_{s})\bigm|(X_{s-}, V_{s-})\bigr] \biggm| \tau_1 = s\biggr]\,\P_{(x, v)}(\tau_1 \in \ud s).
    \end{align*}
    We have that $\E\bigl[\cP_{t- s}^{(n)} f (X_{s}, V_{s})\bigm|(X_{s-}, V_{s-})\bigr] = \Pi \cP_{t- s}^{(n)} f (X_{s-}, V_{s-})$. Hence, we get that 
    \begin{align*}
        \cP_t^{(n+1)} f(x, v)&= \int_0^{t} \E_{(x, v)}[\Pi \cP_{t- s}^{(n)} f (X_{s-}, V_{s-})| \tau_1 = s ] \,\P_{(x, v)}(\tau_1 \in \ud s)\\
        &= \int_0^{t} \hat \cP_s^0\Pi \cP_{t- s}^{(n)} f (x, v) \,\P_{(x, v)}(\tau_1 \in \ud s) \\
        &= \int_0^{t}\lambda_\rr e^{- \lambda_\rr s} \hat \cP_s^0\Pi \cP_{t- s}^{(n)} f (x, v)\, \ud s = \cQ_t^{(n+1) } f(x, v).
    \end{align*}
    where, we use the induction hypothesis, $e^{- \lambda_r s} \hat \cP^0_s = \cS_s$ and the change of variable $s \mapsto t - s$ in the last step.
     Thus, we have that $\cP_t f = \cQ_t f$ $\mu_\sigma$-a.s.

    The second statement in the lemma follows from the bound $\|\cP_t f \|_{L^2(\mu_\sigma)} = \|\cQ_t f \|_{L^2(\mu_\sigma)} \leq \|f\|_{L^2(\mu_\sigma)}$ for $f \in C_b^1(\R^d \times \R^d)$ to uniquely extend $\cP_t$ to $L^2(\mu_\sigma)$. Strong continuity follows from this extension, using the fact that $(\cQ_t)_{t \in \R_+}$ is strongly continuous.
\end{proof}

\begin{proof}[Proof of Theorem~\ref{thm:stationaritycore}]
 Parts (I) and (II) follow from the fact $(\cP_t)_{t \in \R_+}=(\cQ_t)_{t \in \R_+}$ on $L^2(\mu_\sigma)$ (by Lemma~\ref{lem:Pcontraction}) and $(\cQ_t)_{t \in \R_+}$ is generated by the $L^2(\mu_\sigma)$-closure of $(\cL, C_c^1(\R^d \times \R^d))$ (by Lemma~\ref{lem:core2}). 
 
   For part (III),  let $f \in C_c^1(\R^d \times \R^d)$.  Thus, by Part (I), we have $\bar \cL f = \cL f = \lambda_\rr\Lv f + \Ltr f$ and
    \begin{align}
        \int_{\R^d \times \R^d}& \bar \cL f \,\ud \mu_\sigma  =  \langle 1, \Ltr f \rangle_{L^2(\mu_\sigma)}+ \lambda_r \langle 1, \Lv f \rangle_{L^2(\mu_\sigma)}.
        \label{eq:L_bar}
    \end{align}
    By~\eqref{eq:L3}, we have 
    $\langle 1, \Ltr f \rangle_{L^2(\mu_\sigma)} = \langle \Ltr^\star 1 , f \rangle_{L^2(\mu_\sigma)} = 0$. 
  Since $\Pi$ is a self-adjoint operator on $L^2(\mu_\sigma)$ and $\Lv=\Pi-I$, the operator $\Lv$ is self-adjoint, implying $\langle\Lv f, 1\rangle_{L^2(\mu_\sigma)} = \langle f, \Lv 1\rangle_{L^2(\mu_\sigma)} = 0$.
    Thus, by~\eqref{eq:L_bar}, we have $\int_{\R^d \times \R^d} \bar \cL f \,\ud \mu_\sigma = 0$.
\end{proof}

\section{Proofs}\label{sec:proofs}
\subsection{Structure of the proof of Theorem~\ref{thm:maincnvg}}
\label{subsec:Sturcture_of_proof_of_main_thm}
This section proves Theorem~\ref{thm:maincnvg}, together with the auxiliary theorems and lemmas from Section~\ref{sec:hypocoercivity} on which its proof depends; see Figure~\ref{fig:lemmadeps}. It also establishes Lemmas~\ref{lem:DMSboundA} and~\ref{lem:DMSboundB}, which bound the individual terms in the coercivity inequality~\eqref{eq:coercivity}, thereby yielding Theorem~\ref{thm:coercivity}. The proof of our main result, Theorem~\ref{thm:maincnvg}, crucially relies on the identification of the core of~\eqref{eq:TBP} in Theorem~\ref{thm:stationaritycore} of Section~\ref{sec:invariance}.

%
%

\begin{figure}[H]
\centering
\begin{tikzpicture}[
    >={stealth},
    lem/.style  = {draw, rounded corners=2pt, align=center, inner sep=3.5pt,
                   font=\small},
    thm/.style  = {draw, line width=1pt, rounded corners=2pt, align=center,
                   inner sep=3.5pt, font=\small},
    aux/.style  = {draw, densely dashed, rounded corners=2pt, align=center,
                   inner sep=3.5pt, font=\small},
    note/.style = {align=left, font=\footnotesize\itshape, text width=3.4cm},
    imp/.style  = {->, thick},
    side/.style = {->, thick, densely dotted},
]




\node[lem] (wb)    at (0.7,-4.5){Lemmas~\ref{lem:weightedbochner} \&~\ref{lem:bochnerineq} \\ Weighted Bochner formula and inequality};



\node[lem] (l32)   at (6.6,-4.5){Lemma~\ref{lem:3.2}\\ $\|\Ltr^\star\Ltr g\|^2 \le K_0\|\bar \cL_\sigma g\|^2$};


\node[lem] (dmsA)  at (0,-6.2)   {Lemma~\ref{lem:DMSboundA}
\\$|\langle \cA f, \cL f \rangle_{L^2(\mu_\sigma )}| \leq \|f- \Pi f\|_{L^2(\mu_\sigma)}^2$
};
\node[lem] (dmsB)  at (6.6,-6.2) {Lemma~\ref{lem:DMSboundB}
\\$\langle \cA\,\cL f, f \rangle_{L^2( \mu_\sigma )} \geq \frac{1}{2} \|\Pi f \|_{L^2(\mu_\sigma)}^2 + $ other terms
};

\node[thm] (coer)  at (6.6,-7.8){Theorem~\ref{thm:coercivity}\\ Coercivity in $\cV_\delta$};
\node[lem] (stat)  at (1.8,-7.8){Theorem~\ref{thm:stationaritycore}\\ Core and $\mu_\sigma$-invariance of $\bar \cL$};

\node[thm] (main)  at (6.6,-9.4){Theorem~\ref{thm:maincnvg}\\ $L^2(\mu_\sigma)$-decay of semigroup};


\draw[imp] (wb)    -- (l32);
\draw[imp] (l32)   -- (dmsB);
\draw[imp] (dmsA)  |- ($(coer.north)+(0,0.35)$) -- (coer.north);
\draw[imp] (dmsB)  |- ($(coer.north)+(0,0.35)$) -- (coer.north);
\draw[imp] (coer)  |- ($(main.north)+(0,0.35)$) -- (main.north);
\draw[imp] (stat)  |- ($(main.north)+(0,0.35)$) -- (main.north);




\end{tikzpicture}
\caption{Structure of the hypocoercivity framework in Theorem~\ref{thm:maincnvg}. Auxiliary results, used in the proofs of the theorems and lemmas in the diagram, are depicted in Figure~\ref{fig:auxdeps}. 
}

\label{fig:lemmadeps}
\end{figure}
Lemmas~\ref{lem:TLD2},~\ref{lem:lifts},  \ref{lem:Afbound} and~\ref{lem:wtr} are auxiliary results used in the proofs of Lemmas~\ref{lem:DMSboundA} and~\ref{lem:DMSboundB}, and Theorems~\ref{thm:coercivity} and~\ref{thm:maincnvg}. Their proofs are based on Lemmas~\ref{lem:dirichletfinite}, \ref{lem:compactapprox}, \ref{lem:trdensity} and~\ref{lem:3.2}, which are proved ``from first principles'' (see Figure~\ref{fig:auxdeps}). 
%
%

\begin{figure}[H]
\centering
\begin{tikzpicture}[
    >={stealth},
    lem/.style  = {draw, rounded corners=2pt, align=center, inner sep=3.5pt,
                   font=\small},
    thm/.style  = {draw, line width=1pt, rounded corners=2pt, align=center,
                   inner sep=3.5pt, font=\small},
    aux/.style  = {draw, densely dashed, rounded corners=2pt, align=center,
                   inner sep=3.5pt, font=\small},
    note/.style = {align=left, font=\footnotesize\itshape, text width=3.4cm},
    imp/.style  = {->, thick},
    side/.style = {->, thick, densely dotted},
]


\node[lem] (dir) at (0, 0) {Lemma~\ref{lem:dirichletfinite}\\
$\int_{\R^d} \sigma^2 |\nabla_x g|^2 \ud \mu < \infty$
};

\node[lem] (tld2)  at (0,-1.8)   {Lemma~\ref{lem:TLD2}\\ Properties of $\bar\cL_\sigma$};

\node[lem] (cmp) at (3.9, 0) {Lemma~\ref{lem:compactapprox}:
$C_c^n$ dense\\ in $\{\cL_\sigma g \in L^2(\mu)\}$
};

\node[lem] (wtr)   at (8,-1.8) {Lemma~\ref{lem:wtr}\\  $C^n_\Pi$, $W_\mathrm{tr}$ and $W_{\mathrm{tr}^\star}$};
\node[lem] (afb)   at (3.7,-1.8) {Lemma~\ref{lem:Afbound}\\ $\cA$, $\Ltr\cA$ are bounded
};

\node[lem] (dens) at (11.8,0)    {Lemma~\ref{lem:trdensity}\\  $C_c^1$ is dense in $W_\mathrm{tr}$};
\node[lem] (lifts) at (11.8,-1.8) {Lemma~\ref{lem:lifts}\\ \textit{Lift} properties: \eqref{eq:L1}--\eqref{eq:L3}};

\node[lem] (l32)   at (8,0){Lemma~\ref{lem:3.2}\\ $\|\Ltr^\star\Ltr g\|^2 \le K_0 \|\bar \cL_\sigma g\|^2$};



\draw[imp] (dens)  -- (lifts);

\draw[imp] (l32) -- (wtr);
\draw[imp] (dir) -- (cmp);
\draw[imp] (dir) |- ($(dir.south)+(0, -0.33)$) -| ($(tld2.north)+(0, 0.37)$) -- (tld2.north);
\draw[imp] (cmp) |- ($(cmp.south)+(0, -0.35)$) -| ($(tld2.north)+(0, 0.35)$) -- (tld2.north);
\draw[imp] (wtr) -- (afb);



\end{tikzpicture}
\caption{ Auxiliary lemmas (the term \textit{lift} in Lemma~\ref{lem:lifts} is is used in the sense of~\cite[Def.1]{eberle_non-reversible_2026}). Recall the definition $\cA \coloneqq (m - \bar \cL_\sigma)^{-1} \Pi \Ltr^\star$ in Lemma~\ref{lem:Afbound}.
}

\label{fig:auxdeps}
\end{figure}

\subsection{Proof of Lemma~\ref{lem:TLD2}}\label{sec:proofTLD2}
Throughout this section, we let $\phi : \R_+ \rightarrow [0, 1]$ be a smooth non-increasing cutoff function such that $\phi(s) = 1$ for $s \in [0, 1]$ and $\phi(s) = 0$ for $s \in [2, \infty)$.
For $n \in \N \setminus \{0\}$, set $\phi_n(x) \coloneqq  \phi(|x| / n)$ for $x \in \R^d$. By ~\ref{ass:sigmabounds}, we have that
$\2{|x| \leq 2n} \sigma(x) \leq \sigma(0) + 2n M_D$, 
using the mean value theorem for $\sigma$.
Hence, under~\ref{ass:sigmabounds}, we have the bound
    \begin{align*}
        \sigma(x)|\nabla_x \phi_n(x)| &= \2{|x| \leq 2n} \sigma(x) |\nabla_x \phi_n(x)| \leq \| \phi'\|_\infty\frac{1}{n} \2{|x| \leq 2n} \sigma(x) \\
        &\leq \| \phi'\|_\infty\frac{1}{n} \left(\sigma(0) + 2nM_D \right) \leq \| \phi'\|_\infty \left(\sigma(0) + 2M_D \right) =: M_{\phi, 1}  ,\numberthis \label{eq:inter4202}
    \end{align*}
     Similarly, the following bound holds.
    \begin{align*}
        \sigma^2(x) |\Delta_x \phi_n(x)|&\leq  
        (\sigma(0) + 2 M_D)^2 \left(\|\phi''\|_\infty +({d-1}) \|\phi'\|_\infty\right) =: M_{\phi, 2}. \numberthis \label{eq:inter4302}
    \end{align*}

\begin{proof}[Proof of Lemma~\ref{lem:TLD2}]
\underline{Proof of part (I):}
Recall the definition of $(\bar\cL_\sigma, \mathrm{Dom}(\bar \cL_\sigma))$ from Section~\ref{sec:defgen}. For $g \in \mathrm{Dom}(\bar \cL_\sigma)\cap C_b^2(\R^d)$, define the difference quotient $Q_t(x) = (\cT_t g (x) - g(x) )/ t $ for $t > 0$, and choose a sequence $(t_n)_{n \in \N} \subseteq (0, \infty)$ such that $t_n \rightarrow 0$ as $n \rightarrow \infty$. As $g \in \mathrm{Dom}(\bar \cL_\sigma)$, $Q_{t_n} \rightarrow \bar \cL_\sigma g $ in $L^2(\mu)$. Thus, using~\cite[Cor.~2.32]{folland_real_1999}, there exists a subsequence $(t_{n_j})_{j \in \N}$ such that $Q_{t_{n_j}} \rightarrow \bar \cL_\sigma g$ $\mu$-almost everywhere.
Moreover, as $g \in C_b^2(\R^d)$, by an application of the It\^o formula (e.g.,~\cite[Ch.2 Thm~32]{protter_stochastic_2004}), 
 $Q_{t_n} (x) \rightarrow -(\nabla_x^\star \sigma^2 \nabla_x g)(x) = \cL_\sigma g(x)$ for all $x \in \R^d$. In particular, this limit holds along the subsequence $(t_{n_j})_{j \in \N}$. 
 Hence, we have that $\bar \cL_\sigma g = \cL_\sigma g$ $\mu$-a.e.

The fact that $C_c^2(\R^d) \subseteq \mathrm{Dom}(\bar \cL_\sigma)$  follows by an application of the It\^o formula (e.g.,~\cite[Ch.2 Thm~32]{protter_stochastic_2004}) and the dominated convergence theorem.

We have that $C_c^\infty(\R^d)$ is dense in $L^2(\mu)$, hence $C_b^\infty(\R^d) \cap \mathrm{Dom}(\bar \cL_\sigma)$ is dense in $L^2(\mu)$.
Using~\cite[Ch.1 Prop.1.5]{ethier_markov_1986} and~\cite[Thm~1.5.2]{cerrai_second_2001}, for $g \in C_b^\infty(\R^d)$, we have $\cT_t g \in  C^\infty_b(\R^d) \cap \mathrm{Dom}(\bar \cL_\sigma)$.
Using~\cite[Ch.1 Prop.3.3]{ethier_markov_1986}, $C^\infty_b(\R^d) \cap \mathrm{Dom}(\bar \cL_\sigma)$ is a core of $(\bar\cL_\sigma, \mathrm{Dom}(\bar \cL_\sigma))$. 

Hence, to show that $C_c^\infty(\R^d)$ is a core of $(\bar\cL_\sigma, \mathrm{Dom}(\bar \cL_\sigma))$, it remains to show that for all $g \in C^\infty_b(\R^d) \cap \mathrm{Dom}(\bar \cL_\sigma)$, there exists a sequence $(g_n)_{n \in \N} \subseteq C_c^\infty(\R^d)$ such that $\| g_n - g \|_{L^2(\mu)} + \|\bar \cL_\sigma(g_n - g)\|_{L^2(\mu)} \rightarrow 0$. Using Lemma~\ref{lem:compactapprox} and part (I), we get such a sequence.
\\

\noindent \underline{Proof of part (II):}
Let $f, g \in C^2_b(\R^d) \cap \mathrm{Dom}(\bar \cL_\sigma)$. Recall that $\phi_n$ is the cutoff function defined at the beginning of this section for $n \in \N\setminus \{0\}$. Recall that $\cL_\sigma f = - \nabla_x^\star \sigma^2 \nabla_x f$. Using integration-by-parts, we have
\begin{align*}
\langle \phi_n g, (- \cL_\sigma) f \rangle_{L^2(\mu)} &= \langle \nabla_x (\phi_n g ), \sigma^2 \nabla_x f \rangle_{L^2(\mu)} \\&= \langle g\nabla_x \phi_n  , \sigma^2 \nabla_x f \rangle_{L^2(\mu)} + \langle \phi_n  \nabla_x g  , \sigma^2 \nabla_x f \rangle_{L^2(\mu)}. \numberthis \label{eq:inter5101}
\end{align*}
Using dominated convergence theorem, we have that $\|\phi_n g - g\|_{L^2(\mu)} \rightarrow 0 $. Therefore,
\begin{align}
\lim_{n \rightarrow \infty}|\langle \phi_n g, (- \cL_\sigma) f \rangle_{L^2(\mu)} - \langle g, (- \cL_\sigma) f \rangle_{L^2(\mu)}| \leq \lim_{n \rightarrow \infty} \|\phi_n g - g\|_{L^2(\mu)} \|\cL_\sigma f \|_{L^2(\mu)} = 0, \label{eq:inter52} \end{align}
since $\|\cL_\sigma f \|_{L^2(\mu)} = \|\bar\cL_\sigma f\|_{L^2(\mu)} < \infty$ by part (I) and $f \in \mathrm{Dom}(\bar \cL_\sigma) \cap C_b^2(\R^d)$. 

For the first term on the right-hand side of~\eqref{eq:inter5101}, we have that the integrand
$
    |g \sigma^2\langle  \nabla_x \phi_n ,  \nabla_x f \rangle | \leq \sigma^2|\nabla_x \phi_n ||g \nabla_x f| \leq M_{\phi, 1}|g| |\sigma \nabla_x f|,
$
using~\eqref{eq:inter4202}. 
Also, note that $$\int_{\R^d}|g||\sigma\nabla_x f |\ud \mu \leq \|g\|_{L^2(\mu)} (\int \sigma^2 |\nabla_x f |^2 \ud \mu)^{1/2} < \infty$$ using Lemma~\ref{lem:dirichletfinite}. 
Hence, by the dominated convergence theorem, 
\begin{align*}
    \lim_{n \rightarrow \infty } \langle g \nabla_x \phi_n , \sigma^2 \nabla_x f \rangle_{L^2(\mu)} &=   \int_{\R^d}\lim_{n \rightarrow \infty } g \sigma^2 \langle \nabla_x \phi_n , \nabla_x f \rangle \,\ud \mu\\ &= \int_{\R^d}\lim_{n \rightarrow \infty } \2{|x|\geq n}g \sigma^2 \langle \nabla_x \phi_n , \nabla_x f \rangle \,\ud \mu = 0.
    \numberthis \label{eq:inter53}
\end{align*}
For the second term in the right-hand side of~\eqref{eq:inter5101}, we get that $|\phi_n \sigma^2 \langle \nabla_x g ,  \nabla_x f \rangle | \leq |\sigma \nabla_x g| |\sigma\nabla_x f |$ and $\int_{\R^d}|\sigma \nabla_x g| |\sigma\nabla_x f | \,\ud \mu < \infty $ as $|\sigma \nabla_x g| , |\sigma\nabla_x f | \in L^2(\mu)$ using Lemma~\ref{lem:dirichletfinite}. Hence, using the dominated convergence theorem,
\begin{align}
    \lim_{n \rightarrow \infty } \langle \phi_n \nabla_x g , \sigma^2 \nabla_x f \rangle_{L^2(\mu)} = \int_{\R^d} \lim_{n \rightarrow \infty} \phi_n\sigma^2\langle  \nabla_x g ,  \nabla_x f \rangle\, \ud\mu = \langle  \nabla_x g , \sigma^2 \nabla_x f \rangle_{L^2(\mu)}. \label{eq:inter54}
\end{align}
By~\eqref{eq:inter5101}, \eqref{eq:inter52}, \eqref{eq:inter53} and~\eqref{eq:inter54}, we get that 
\begin{equation}\langle \nabla_x g, \sigma^2 \nabla_x f \rangle_{L^2(\mu)} = \langle g,  (- \cL_\sigma) f \rangle_{L^2(\mu)}.\label{eq:inter5501}\end{equation}
Using part (I), $\cL_\sigma f = \bar \cL_\sigma f$, which ends the proof of the first statement in part (II) of the lemma.

 Let Assumption~\ref{ass:wpi} hold. Then for $g \in C^2_b(\R^d) \cap \mathrm{Dom}(\bar \cL_\sigma)$, we have using Lemma~\ref{lem:dirichletfinite}, $\int_{\R^d} \sigma^2 |\nabla_x g |^2 \, \ud \mu < \infty$. Hence,
\begin{align*}
    \int_{\R^d} \sigma^2 |\nabla_x g |^2 \, \ud \mu & = \langle \sigma^2\nabla_xg ,  \nabla_x (g - \mu(g)) \rangle_{L^2(\mu)} \\
    &= \langle (- \cL_\sigma)g,   g - \mu(g) \rangle_{L^2(\mu)} \leq \|\cL_\sigma g\|_{L^2(\mu)} \|g - \mu(g)\|\\
    &\leq \frac{1}{\sqrt m} \|\cL_\sigma g\|_{L^2(\mu)} \left(\int_{\R^d} \sigma^2 |\nabla_x g |^2 \, \ud \mu\right)^{1/2},
\end{align*}
where the second equality follows from the earlier part of this proof~\eqref{eq:inter5501} and the last inequality follows from Assumption~\ref{ass:wpi}. Hence, we have the inequality
    $\langle g, (- \cL_\sigma) g \rangle_{L^2(\mu)} = \int_{\R^d} \sigma^2 |\nabla_x g |^2 \, \ud \mu \leq m^{-1}\|\cL_\sigma g\|_{L^2(\mu)}^2$ for all $g \in C^2_b(\R^d) \cap \mathrm{Dom}(\bar \cL_\sigma)$, concluding the proof of part~(II).

\noindent \underline{Proof of part (III):}
Let $g \in C^1_b(\R^d)$. Then according to~\cite[Thm 1.6.6, Thm 1.7.4]{cerrai_second_2001}, there exists $\varphi \in C_b^2(\R^d)$, such that $m \varphi - \cL_\sigma \varphi = g$. 

Also, $\cL_\sigma \varphi = m \varphi - g \in L^2(\mu)$ as $\varphi \in C^2_b(\R^d)$ and $g \in C^1_b(\R^d)$. Thus, we can use Lemma~\ref{lem:compactapprox} to get a sequence $(\varphi_n)_{n \in \N} \subseteq C_c^2(\R^d)$ such that the following limit holds. (The reader should note here that $\varphi_n$ and $\phi_n$ are not the same.)
\begin{equation}\|\varphi_n - \varphi\|_{L^2(\mu)} + \|\cL_\sigma(\varphi_n - \varphi)\|_{L^2(\mu)} \rightarrow 0. \label{eq:inter5001}
\end{equation} 
This implies that $(\varphi_n)_{n \in \N} \subseteq C_c^2(\R^d)$ is Cauchy in the norm $\|\cdot \|_{L^2(\mu)} + \|\cL_\sigma (\cdot ) \|_{L^2(\mu)}$. Note that using part (I), as $\varphi_{n_1} - \varphi_{n_2} \in C_c^2(\R^d) \subseteq \mathrm{Dom}(\bar \cL_\sigma) \cap C_b^2(\R^d)$ we have that for $n_1, n_2 \in \N$, 
$$\|\varphi_{n_1} - \varphi_{n_2}\|_{L^2(\mu)} + \|\cL_\sigma(\varphi_{n_1} - \varphi_{n_2})\|_{L^2(\mu)} = \|\varphi_{n_1} - \varphi_{n_2}\|_{L^2(\mu)} + \|\bar \cL_\sigma(\varphi_{n_1} - \varphi_{n_2})\|_{L^2(\mu)}.$$
I.e., the sequence $(\varphi_n)_{n \in \N} \subseteq C_c^2(\R^d)$ is also Cauchy in the norm $\|\cdot \|_{L^2(\mu)} + \|\bar \cL_\sigma (\cdot ) \|_{L^2(\mu)}$. Using that the domain $\mathrm{Dom}(\bar \cL_\sigma)$ is complete under this norm, we have that there exists $f \in \mathrm{Dom}(\bar \cL_\sigma)$ such that $\|\varphi_{n} - f\|_{L^2(\mu)} + \|\bar \cL_\sigma(\varphi_{n} - f)\|_{L^2(\mu)} \rightarrow 0$. In particular, we have that $\|\varphi_{n} - f\|_{L^2(\mu)} \rightarrow 0$. From~\eqref{eq:inter5001}, we also have that $\|\varphi_n - \varphi\|_{L^2(\mu)} \rightarrow 0$. Hence $\varphi = f$ $\mu$-almost everywhere, and $\varphi \in \mathrm{Dom}(\bar \cL_\sigma)$. Thus,  $\varphi \in \mathrm{Dom}(\bar \cL_\sigma) \cap C_b^2(\R^d)$ and using part (I), $m \varphi - \bar \cL_\sigma \varphi = g$.
From~\cite[Ch.1 Prop.2.1]{ethier_markov_1986}, $m - \bar \cL_\sigma$ is injective. Hence, $\varphi = (m - \bar \cL_\sigma)^{-1} g \in C^2_b(\R^d) \cap \mathrm{Dom}(\bar \cL_\sigma)$.

For the proof of the second statement in part (III), let~\ref{ass:wpi} hold. Also, let $g \in C_b^1(\R^d) \cap L_0^2(\mu)$. Let $\varphi = (m - \bar \cL_\sigma)^{-1} g$. Therefore, $m \varphi = g + \bar \cL_\sigma \varphi$. Integrating both sides w.r.t. $\mu$, we get $\mu (\varphi) = 0$ as $\mu(\bar \cL_\sigma \varphi ) = \mu(g) = 0$. Using the first statement in part (III), we have that $\varphi \in C^2_b(\R^d) \cap \mathrm{Dom}(\bar \cL_\sigma) \cap L_0^2(\mu)$. Thus, using~\ref{ass:wpi} for $\varphi$ and part (II) of this lemma, we have
\begin{align*}m\|\varphi\|_{L^2(\mu)}^2 &\leq \int_{\R^d} \sigma^2 |\nabla_x \varphi|^2 \, \ud \mu = \langle (- \bar \cL_\sigma ) \varphi, \varphi \rangle_{L^2(\mu )} \\&=  \langle (m- \bar \cL_\sigma ) \varphi, \varphi \rangle_{L^2(\mu )}- m \|\varphi\|^2_{L^2(\mu)}= \langle g, \varphi \rangle_{L^2(\mu)} - m \|\varphi\|^2_{L^2(\mu)}.\end{align*}
Therefore, $2 m \|\varphi\|^2_{L^2(\mu)} \leq \langle g ,\varphi\rangle_{L^2(\mu)}$. Note that the bound $\|\varphi\|_{L^2(\mu)} \leq (2m)^{-1} \|g\|_{L^2(\mu)}$ holds if $\varphi = 0$. Hence, assume $\varphi \neq 0$. Then, using the Cauchy--Schwarz inequality, we have $2m \|\varphi \|^2_{L^2(\mu)} \leq \| g \|_{L^2(\mu)} \|\varphi\|_{L^2(\mu)}$, implying the inequality in the second statement of part (III) of the lemma.
\end{proof}

\subsection{Proof of Lemma~\ref{lem:lifts}}\label{sec:prooflifts}
We need the following definitions throughout the next proof.
Let $\gamma$ denote the standard Gaussian distribution on $\R^d$.
Let $\Sph \coloneqq \{x \in \R^d : |x| = 1\}$. Let $\nu_\Sph$ denote the uniform distribution on $\Sph$. Define the $\chi_d$ distribution on $\R_+ =[0, \infty)$ by $\chi_d (A) \coloneqq \gamma(\{z: |z| \in A\})$ for $A$ in the Borel sigma algebra $\cB(\R_+)$. 
\begin{proof}[Proof of Lemma~\ref{lem:lifts}]
\underline{Proof of~\eqref{eq:L1}:}
Recall that $\Pi b $ is independent of the $v$ variable from the definition~\eqref{eq:Lv}.
Let $\tilde b \in C^1(\R^d)$ be such that $\tilde b (x) = \Pi b(x, v) $ for all $(x, v) \in \R^d \times \R^d$. Then, $\Ltr\Pi b (x, v) = -\Ltr^\star \Pi b (x, v)  = \langle v, \nabla_x  \tilde b (x)\rangle$. Thus, $\Pi \Ltr\Pi b (x, v) = \Pi\Ltr^\star \Pi b (x, v)  = \int_{\R^d}\langle v, \nabla_x  \tilde b (x)\rangle\, \ud\kappa_x(v) = 0$, using the fact that $\kappa_x$ is a centred Gaussian distribution with standard deviation $\sigma(x)$ for  $x \in \R^d$. 

\noindent \underline{Proof of~\eqref{eq:L2}:} Let $\tilde f \in C^2(\R^d)$ be such that $\tilde f (x) = \Pi f(x, v)$ for all $(x, v) \in \R^d \times \R^d$. 
Thus, $\Ltr \Pi f (x, v) = \langle v, \nabla_x \tilde f (x) \rangle$. Recall the definition of $\Ltr^\star$ from~\eqref{eq:Ltradjoint}. Using this, we get
\begin{align*}
        \Ltr^\star \Ltr\Pi f  (x, v) &= -\langle v, \nabla^2_x \tilde f(x) v \rangle +2 \left\langle -\frac{\nabla_x H (x, v)}{|\nabla_x H(x, v)|}, v\right\rangle_+^2 \langle   \nabla_x H(x, v) , \nabla_x \tilde f(x) \rangle,
    \end{align*}
    with convention that the second term  is zero if $\nabla_x H(x, v) = 0$. By definition~\eqref{eq:nablaH} of $\nabla_x H$, we get
\begin{equation}
\begin{aligned}
        \Ltr^\star \Ltr\Pi f  (x, v)&= -\langle v, \nabla^2_x \tilde f(x) v \rangle +2 \left\langle -\frac{\nabla_x H (x, v)}{|\nabla_x H(x, v)|}, v\right\rangle_+^2 \langle   \nabla_x U(x) , \nabla_x \tilde f(x) \rangle
        \\&+ \frac{2}{\sigma(x)} \left(d - \frac{|v|^2}{\sigma^2(x)}\right) \left\langle -\frac{\nabla_x H (x, v)}{|\nabla_x H(x, v)|}, v\right\rangle_+^2 \langle   \nabla_x \sigma(x) , \nabla_x \tilde f(x) \rangle.
        \end{aligned}
         \label{eq:inter43}
    \end{equation}

     Also, recall that $\kappa_x$ is a centred Gaussian distribution with standard deviation $\sigma(x)$.
    Then for the first term in~\eqref{eq:inter43}, we have that
    \begin{align}
        \int_{\R^d} \langle v, \nabla^2_x \tilde f(x) v \rangle \,\ud\kappa_x(v)  &= \sigma^2(x) \int_{\R^d} \langle u, \nabla^2_x \tilde f (x) u \rangle \, \ud \gamma(u)= \sigma^2(x) \Delta_x \tilde f (x), \label{eq:inter44}
    \end{align}
    using $\E[Z^T M Z] = \mathrm{Trace}(M)$ for $Z \sim \gamma = \cN(0, I_d)$.\\
    For the coefficient of the second term in~\eqref{eq:inter43}, we have that
    \begin{align}
        \int_{\R^d}  \left\langle -\frac{\nabla_x H (x, v)}{|\nabla_x H(x, v)|}, v\right\rangle_+^2 \, \ud \kappa_x(v) &= \sigma^2(x)\int_{\R^d}  \left\langle -\frac{\nabla_x H (x, \sigma(x) u)}{|\nabla_x H(x, \sigma(x)u)|}, u\right\rangle_+^2 \, \ud \gamma(u). \label{eq:inter45}
    \end{align}
     Thus, continuing from~\eqref{eq:inter45}, using the decomposition of $\gamma$ into $\chi_d$ and $\nu_\Sph$, we have
    \begin{align*}
         \int_{\R^d}  \left\langle -\frac{\nabla_x H (x, v)}{|\nabla_x H(x, v)|}, v\right\rangle_+^2 \, \ud \kappa_x(v) &= \sigma^2(x)\int_{\R_+}\int_{\Sph}  \left\langle -\frac{\nabla_x H (x, \sigma(x) rz)}{|\nabla_x H(x, \sigma(x)rz)|}, rz\right\rangle_+^2 \, \ud \nu_\Sph(z) \,\ud\chi_d(r).
    \end{align*}
    Using the fact that $\nabla_xH(x, v)$ only depends on $v$ through $|v|$, $e_1 \coloneqq -\frac{\nabla_x H (x, \sigma(x) rz)}{|\nabla_x H(x, \sigma(x)rz)|}$ is a unit vector independent of $z$ in the above integral. Hence, for the innermost integral in the above, we have $\E[\langle e_1, Z_\Sph\rangle_+^2] = \E[\langle e_1, Z_\Sph\rangle^2\2{\langle e_1, Z_\Sph\rangle \geq 0}]= \E[\langle e_1, Z_\Sph\rangle^2]/2 = 1/(2d)$ for $Z_\Sph \sim \nu_\Sph$. Thus, 
    \begin{align}
        \int_{\R^d}  \left\langle -\frac{\nabla_x H (x, v)}{|\nabla_x H(x, v)|}, v\right\rangle_+^2 \, \ud \kappa_x(v) &= \frac{\sigma^2(x)}{2d}\int_{\R_+} r^2 \ud \chi_d(r) = \frac{1}{2}\sigma^2(x). \label{eq:inter46}
    \end{align}
    Similarly, for the last term in~\eqref{eq:inter43}, we have that
    \begin{align*}
        \int_{\R^d} \frac{|v|^2}{\sigma^2(x)}  \left\langle -\frac{\nabla_x H (x, v)}{|\nabla_x H(x, v)|}, v\right\rangle_+^2 \ud \kappa_x(v) &= \sigma^2(x)\int_{\R^d} {|u|^2}  \left\langle -\frac{\nabla_x H (x, \sigma(x)u)}{|\nabla_x H(x, \sigma(x)u)|}, u\right\rangle_+^2 \ud \gamma(u)\\
        &= \sigma^2(x) \int_{\R_+} r^4 \int_\Sph \langle e_1, z \rangle_+^2 \,\ud\nu_\Sph(z) \, \ud \chi_d(r)\\
        &= \frac{\sigma^2(x)}{2d} \int_{\R_+} r^4 \ud \chi_d(r) = \frac{d+2}{2}\sigma^2(x). \numberthis \label{eq:inter47}
    \end{align*}
    Thus, using the expression~\eqref{eq:inter43} and plugging in~\eqref{eq:inter44},~\eqref{eq:inter46} and~\eqref{eq:inter47}, we get~\eqref{eq:L2} as
    \begin{equation*}
\begin{aligned}
        \Pi\Ltr^\star \Ltr\Pi f  (x, v)&= -\sigma^2(x) \Delta_x \tilde f(x) +\sigma^2(x) \langle   \nabla_x U(x) , \nabla_x \tilde f(x) \rangle
        \\&+ \frac{2}{\sigma(x)} \left(\frac{d}{2} - \frac{d+2}{2}\right) \sigma^2(x)  \langle   \nabla_x \sigma(x) , \nabla_x \tilde f(x) \rangle
        = - \cL_\sigma \tilde f (x) = - \cL_\sigma \Pi f (x, v).
        \end{aligned}
    \end{equation*}
\noindent \underline{Proof of~\eqref{eq:L3}:}
Recall the definitions of $W_\mathrm{tr}$ and $W_{\mathrm{tr}^\star}$ from~\eqref{eq:defauxspaces}. Assume first that $g \in C_c^1(\R^d \times \R^d) \subseteq  W_\mathrm{tr}$
 and $h \in W_{\mathrm{tr}^\star}$, we have
\begin{equation}
\begin{aligned}
    \langle h, \Ltr g \rangle_{L^2(\mu_\sigma )} &= \int_{\R^d \times \R^d} h(x, v) \langle v, \nabla_x g (x, v)\rangle \ud\mu(x)\,\ud \kappa_x(v)\\
    &+ \int_{\R^d \times \R^d} h(x, v) \langle v, \nabla_x H(x,v)\rangle_+ g (x, R_{\nabla_xH(x, v)}v) \ud\mu(x)\,\ud \kappa_x(v)\\
    &- \int_{\R^d \times \R^d} h(x, v) \langle v, \nabla_x H(x,v)\rangle_+ g (x, v) \ud\mu(x)\,\ud \kappa_x(v). 
\end{aligned}\label{eq:inter48}
\end{equation}
For the first term on the right-hand side of~\eqref{eq:inter48}, we have
\begin{align*}
    \int_{\R^d \times \R^d} h(x, v) \langle v, \nabla_x g (x, v)\rangle \ud\mu(x)\,\ud \kappa_x(v) &= \frac{1}{Z_U (2 \pi)^{d/2}}\int_{\R^d \times \R^d} h(x, v) \langle v, \nabla_x g (x, v)\rangle e^{-H(x, v)} \,\ud x \,\ud v\\
    &= \frac{-1}{Z_U (2 \pi)^{d/2}}\int_{\R^d \times \R^d} \langle v,\nabla_x(h(x, v)e^{-H(x, v)})\rangle  g (x, v)  \,\ud x \,\ud v\\
    &= - \int_{\R^d \times \R^d} \langle v, \nabla_x h(x, v) \rangle g(x, v) \,  \ud\mu(x)\, \ud \kappa_x(v)\\
    &+\int_{\R^d \times \R^d} \langle v, \nabla_x H(x, v) \rangle h(x, v) g(x, v) \,  \ud\mu(x)\,\ud \kappa_x(v), \numberthis \label{eq:inter49}
\end{align*}
using integration by parts (boundary terms vanish as  $g$ has compact support) in the second equality.

For the second term in the right-hand side of~\eqref{eq:inter48}, we have that
\begin{align*}
    \int_{\R^d \times \R^d} &h(x, v) \langle v, \nabla_x H(x,v)\rangle_+ g (x, R_{\nabla_xH(x, v)}v) \ud \kappa_x(v) \, \ud\mu(x)\\ &= \int_{\R^d \times \R^d} \sigma(x)h(x, \sigma(x)u) \langle u, \nabla_x H(x,\sigma(x)u)\rangle_+ g (x, \sigma(x)R_{\nabla_xH(x, \sigma(x)u)}u) \ud \gamma(u) \, \ud\mu(x).
\end{align*}
Define
\begin{align*}
    \cW_+ (x, r, z ) &\coloneqq  h(x, \sigma(x)rz) \langle rz, \nabla_x H(x,\sigma(x)rz )\rangle_+ g (x, r\sigma(x)R_{\nabla_xH(x, \sigma(x)rz)}z),\\
    \cW_- (x, r, z ) &\coloneqq h(x, r\sigma(x)R_{\nabla_xH(x, \sigma(x)rz)}z) \langle- rz, \nabla_x H(x,\sigma(x)rz )\rangle_+ g (x, r\sigma(x)z),
\end{align*}
 for $(x, r, z) \in \R^d \times \R_+ \times \Sph$.
Decomposing $\gamma $ into $\nu_\Sph$ and $\chi_d$, we have
\begin{align*}
    \int_{\R^d \times \R^d} h(x, v) &\langle v, \nabla_x H(x,v)\rangle_+ g (x, R_{\nabla_xH(x, v)}v) \ud \kappa_x(v) \, \ud\mu(x)\\
    &= \int_{\R^d } \sigma(x)\int_{\R_+}\int_{\Sph} \cW_+(x, r, z)\, \ud \nu_\Sph(z)\, \ud \chi_d(r) \, \ud\mu(x).
\end{align*}
Recall from~\eqref{eq:nablaH} that $\nabla_x H (x, v)$ only depends on $v$ through $|v|$. 
Hence, the change of variable $z_\rb \coloneqq R_{\nabla_xH(x, \sigma(x)rz)}z$ is a measure-preserving involution on $(\Sph, \nu_\Sph)$. Applying the same on the innermost $\nu_\Sph$ integral in the above and noticing that $\langle rz, \nabla_x H(x,\sigma(x)rz )\rangle = - \langle rz_\rb, \nabla_x H(x,\sigma(x)rz_\rb )\rangle$, we get
\begin{align*}
    \int_{\R^d \times \R^d} h(x, v) &\langle v, \nabla_x H(x,v)\rangle_+ g (x, R_{\nabla_xH(x, v)}v) \ud \kappa_x(v) \, \ud\mu(x)\\
 &= \int_{\R^d } \sigma(x)\int_{\R_+}\int_{\Sph} \cW_-(x, r, z_\rb)\, \ud \nu_\Sph(z_\rb) \, \ud \chi_d(r) \, \ud\mu(x)\\
    &= \int_{\R^d \times \R^d} g(x, v) \langle - v, \nabla_x H(x,v)\rangle_+ h (x, R_{\nabla_xH(x, v)}v) \ud \kappa_x(v) \, \ud\mu(x) \numberthis \label{eq:inter50}.
\end{align*}
Plugging~\eqref{eq:inter49} and~\eqref{eq:inter50} into~\eqref{eq:inter48}, and using the fact that $\langle v, \nabla_x H(x, v) \rangle - \langle v, \nabla_x H(x, v) \rangle_+ = -\langle -v, \nabla_x H(x, v) \rangle_+$, we get~\eqref{eq:L3} for $g \in C_c^1(\R^d \times \R^d)$ and $h \in W_\mathrm{tr^\star}$. Using Lemma~\ref{lem:trdensity}, we extend~\eqref{eq:L3} to all $g \in W_\mathrm{tr}$ and $h \in W_{\mathrm{tr}^\star}$.
\end{proof}

\subsection{Proof of Lemma~\ref{lem:3.2}}\label{sec:proof3.2}
The following lemmas are needed to establish a generalised version of Bochner's formula for the weighted generator $\cL_\sigma$. Recall the operator $\nabla_x^\star F = - \mathrm{div}_x F + \langle \nabla U, F \rangle $ for a continuously differentiable $F : \R^d \rightarrow \R^d$ from Section~\ref{sec:defgen}. Define $\cL_0 g \coloneqq -\nabla_x^\star \nabla_x g$ for $g \in C^2(\R^d)$. (Note that $\cL_0$ coincides with the generator $\cL_\sigma$ when $\sigma \equiv 1$.)

\begin{lemma}[Weighted Bochner's identity] \label{lem:weightedbochner}
    For $g \in C_c^3(\R^d)$,
    \begin{equation}
        \begin{aligned}
            \|\cL_\sigma  g \|_{L^2(\mu)}^2 &=  \int_{\R^d} \sigma^4 \langle \nabla_x g , \nabla_x^2 U \nabla_x g \rangle \ud \mu  +  \int_{\R^d}\sigma^4 |\nabla_x^2 g |^2_F \ud \mu+  \int_{\R^d} \Theta(g, \sigma) \ud \mu,
        \end{aligned}
    \end{equation}
    where
    \begin{equation}
        \begin{aligned}
            \Theta(g, \sigma) &\coloneqq 4 \sigma^3 \langle \nabla_x \sigma , \nabla_x^2 g \nabla_x g \rangle
    +3\sigma^2 |\nabla_x \sigma|^2 |\nabla_x g |^2-\sigma^3 |\nabla_x g |^2 \langle \nabla_x \sigma, \nabla_x U \rangle\\
    &+ \sigma^3 |\nabla_x g |^2  \Delta_x \sigma - 2 \sigma^3 \langle \nabla_x g , \nabla_x \sigma \rangle \cL_0 g- 2 \sigma^2 \langle \nabla_x g , \nabla_x \sigma  \rangle^2\\ &- 2 \sigma^3 \langle \nabla_x g , \nabla_x^2 \sigma \nabla_x g \rangle.
        \end{aligned}
    \end{equation}
\end{lemma}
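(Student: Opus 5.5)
The plan is to reduce everything to the unweighted operator $\cL_0$ and then integrate by parts against $\mu$. All integrations by parts are legitimate because $g\in C_c^3(\R^d)$ and $\sigma,U$ are smooth, so every integrand is compactly supported. The basic tool is Remark~\ref{rem:ibp} in the form $\int f\,\cL_0 h\,\ud\mu=-\int\langle\nabla_x f,\nabla_x h\rangle\,\ud\mu$ for $f\in C^1$ and $h\in C^2$, at least one of them compactly supported.

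\textbf{Step 1: expand $\cL_\sigma g$.} Expanding $\cL_\sigma g=\mathrm{div}_x(\sigma^2\nabla_x g)-\sigma^2\langle\nabla_x U,\nabla_x g\rangle$ gives
\begin{equation*}
\cL_\sigma g=\sigma^2\cL_0 g+2\sigma\langle\nabla_x\sigma,\nabla_x g\rangle .
\end{equation*}
Squaring and integrating gives
\begin{equation*}
\|\cL_\sigma g\|^2_{L^2(\mu)}=\int\sigma^4(\cL_0 g)^2\,\ud\mu+4\int\sigma^3\langle\nabla_x\sigma,\nabla_x g\rangle\cL_0 g\,\ud\mu+4\int\sigma^2\langle\nabla_x\sigma,\nabla_x g\rangle^2\,\ud\mu .
\end{equation*}

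\textbf{Step 2: the classical Bochner argument with the weight $\sigma^4$.} Taking $f=\sigma^4\cL_0 g$ in the integration by parts gives
\begin{equation*}
\int\sigma^4(\cL_0 g)^2\,\ud\mu=-\int\sigma^4\langle\nabla_x\cL_0 g,\nabla_x g\rangle\,\ud\mu-4\int\sigma^3\cL_0 g\,\langle\nabla_x\sigma,\nabla_x g\rangle\,\ud\mu .
\end{equation*}
Next I use the commutation relation $\nabla_x\cL_0 g=\cL_0\nabla_x g-\nabla_x^2U\,\nabla_x g$, where $\cL_0$ acts componentwise. This produces the curvature term $\int\sigma^4\langle\nabla_x g,\nabla_x^2U\nabla_x g\rangle\,\ud\mu$. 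The remaining piece is $-\sum_i\int\sigma^4\partial_i g\,\cL_0\partial_i g\,\ud\mu$, and a second integration by parts turns it into
\begin{equation*}
\int\sigma^4|\nabla_x^2 g|_F^2\,\ud\mu+4\int\sigma^3\langle\nabla_x\sigma,\nabla_x^2 g\nabla_x g\rangle\,\ud\mu .
\end{equation*}
At this point the two main terms of the identity appear. The remainder is a combination of integrals of the types $\sigma^3\langle\nabla_x\sigma,\nabla_x g\rangle\cL_0 g$, $\sigma^2\langle\nabla_x\sigma,\nabla_x g\rangle^2$ and $\sigma^3\langle\nabla_x\sigma,\nabla_x^2 g\nabla_x g\rangle$.

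\textbf{Step 3: bring the remainder into the stated form $\int\Theta(g,\sigma)\,\ud\mu$.} This is the main obstacle. The representation of the remainder is not unique, since it is only determined modulo $\mu$-integrals of divergences, so the specific coefficients in $\Theta$ must be matched. The terms in $\Theta$ involving $|\nabla_x g|^2$ times $|\nabla_x\sigma|^2$, $\Delta_x\sigma$ and $\langle\nabla_x\sigma,\nabla_x U\rangle$ suggest two further integrations by parts:
\begin{itemize}
\item[(a)] I will integrate $\int\sigma^3\langle\nabla_x\sigma,\nabla_x|\nabla_x g|^2\rangle\,\ud\mu$ by parts, using $\nabla_x|\nabla_x g|^2=2\nabla_x^2 g\nabla_x g$. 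This yields exactly the combination $3\sigma^2|\nabla_x\sigma|^2|\nabla_x g|^2+\sigma^3|\nabla_x g|^2\Delta_x\sigma-\sigma^3|\nabla_x g|^2\langle\nabla_x\sigma,\nabla_x U\rangle$.
\item[(b)] I will integrate $\int\sigma^3\langle\nabla_x\sigma,\nabla_x g\rangle\cL_0 g\,\ud\mu$ by parts, with $f=\sigma^3\langle\nabla_x\sigma,\nabla_x g\rangle$. This produces the terms $\sigma^2\langle\nabla_x g,\nabla_x\sigma\rangle^2$, $\sigma^3\langle\nabla_x g,\nabla_x^2\sigma\nabla_x g\rangle$ and $\sigma^3\langle\nabla_x\sigma,\nabla_x^2 g\nabla_x g\rangle$.
\end{itemize}
I would write the remainder as a general linear combination of these building blocks together with the two divergence identities (a) and (b). I would then solve the small linear system for the coefficients that reproduce $\Theta$: $4$ on the Hessian–gradient term, $-2$ on the $\cL_0$ term, $-2$ on both the $\langle\nabla_x g,\nabla_x\sigma\rangle^2$ and the $\nabla_x^2\sigma$ terms, and $1,3,-1$ on the $|\nabla_x g|^2$ terms. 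Finally, as a consistency check, I would verify the identity at $\sigma\equiv1$, where $\Theta=0$ and it reduces to the classical Bochner identity, and separately on a simple example with linear $\sigma$.
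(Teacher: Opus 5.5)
Your proposal is correct, but it is organised differently from the paper's proof. The paper argues pointwise in the $\Gamma_2$ style. It expands $\langle \sigma^2\nabla_x g,\nabla_x(\cL_\sigma g)\rangle$, inserts the classical pointwise Bochner identity for $\cL_0$, and then eliminates $\sigma^4\cL_0(|\nabla_x g|^2)$ using a product-rule formula for $\cL_\sigma(\sigma^2|\nabla_x g|^2)$. A single integration against $\mu$ removes the exact term $\tfrac12\cL_\sigma(\sigma^2|\nabla_x g|^2)$, and $\Theta$ is precisely the pointwise remainder, so no coefficient matching is needed.

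You instead square $\cL_\sigma g$ and integrate by parts twice. When you carry this out, the cross terms $\pm 4\int\sigma^3\langle\nabla_x\sigma,\nabla_x g\rangle\cL_0 g\,\ud\mu$ from Steps 1 and 2 cancel exactly, so Step 2 already gives
\[
\|\cL_\sigma g\|_{L^2(\mu)}^2=\int\sigma^4\langle\nabla_x g,\nabla_x^2U\nabla_x g\rangle\,\ud\mu+\int\sigma^4|\nabla_x^2 g|_F^2\,\ud\mu+\int\bigl(4\sigma^3\langle\nabla_x\sigma,\nabla_x^2g\nabla_x g\rangle+4\sigma^2\langle\nabla_x\sigma,\nabla_x g\rangle^2\bigr)\,\ud\mu .
\]
Your linear system in Step 3 has a solution: write each of (a) and (b) as ``integral of a combination $=0$'', add (a) once and subtract (b) twice. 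The integrands then sum pointwise to exactly $\Theta$.

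Your route uses only integration by parts and the commutator $\nabla_x\cL_0 g=\cL_0\nabla_x g-\nabla_x^2U\nabla_x g$. Its intermediate form is arguably more useful than $\Theta$ itself. If you discard the nonnegative term $4\sigma^2\langle\nabla_x\sigma,\nabla_x g\rangle^2$ and apply Young's inequality to the Hessian--gradient term, you obtain Lemma~\ref{lem:bochnerineq} with $K+8M_D^2$ in place of $K_1$, with no dependence on $M_H$, $M_{U,\sigma}$ or $\sqrt d$. The paper's route, in exchange, reaches the stated $\Theta$ directly and fits the standard $\Gamma_2$ calculus.
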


\begin{proof}
We start by establishing auxiliary identities~\eqref{eq:auxBochnerA} and~\eqref{eq:auxBochnerB}. A standard computation in~\cite[Ch.3]{bakry_analysis_2014} yields the Bochner identity:
 for $g \in C^3(\R^d)$, 
    \begin{equation}
    \label{eq:auxBochnerA}
        -\langle \nabla_x g, \nabla_x (\cL_0 g ) \rangle = - \frac{1}{2}\cL_0 (|\nabla_x g|^2) + \langle \nabla_x g, \nabla^2_x U  \nabla_x  g \rangle + |\nabla^2_x g |_F^2.
    \end{equation}
We now prove that, for any $g \in C^3(\R^d)$, it holds
\begin{equation}
\label{eq:auxBochnerB}
\begin{aligned}
    \cL_\sigma (\sigma^2 |\nabla_x g |^2 ) &= \sigma^4 \cL_0 (|\nabla_x g |^2 ) + 12 \sigma^3 \langle \nabla_x \sigma , \nabla_x^2 g \nabla_x g \rangle\\
    &+ 6 \sigma^2 |\nabla_x \sigma|^2 |\nabla_x g |^2 - 2\sigma^3 |\nabla_x g |^2 \langle \nabla_x \sigma, \nabla_x U \rangle\\
    &+ 2\sigma^3 |\nabla_x g |^2 \Delta_x \sigma.
\end{aligned}
\end{equation}
Indeed, denoting $a \coloneqq \sigma^2$ and $S \coloneqq |\nabla_x g |^2$, the quantity of interest is
    \begin{align*}
        -\nabla_x^\star a \nabla_x (aS) &= -\nabla_x^\star \left(a^2 \nabla_x S + Sa \nabla_x a \right)\\
        &= -a^2 \nabla_x^\star \nabla_x S + 3a \langle \nabla_x a, \nabla_x S \rangle
        + S a \Delta_x a + |\nabla_x a |^2S - Sa\langle \nabla_x U , \nabla_x a \rangle.
    \end{align*} 
    Substituting back $a = \sigma^2$ and $S = |\nabla_x g|^2$ implies~\eqref{eq:auxBochnerB}.
    
    Using~\eqref{eq:auxBochnerA} and~\eqref{eq:auxBochnerB}, we now prove the lemma. Note that
    \begin{align*}
         \cL_\sigma g = -\nabla_x^\star \sigma^2 \nabla_x g =  \sigma^2 \cL_0 g + 2 \sigma \langle\nabla_x \sigma, \nabla_x g  \rangle.
    \end{align*}
    Hence, 
       $\nabla_x (\cL_\sigma g) = \sigma^2  \nabla_x (\cL_0 g) + 2 \sigma (\nabla_x \sigma) \cL_0 g + 2 \nabla_x \sigma  \langle \nabla_x g, \nabla_x \sigma\rangle + 2 \sigma   \nabla_x^2 \sigma \nabla_x g + 2 \sigma \nabla^2 _x g \nabla_x \sigma$.
    Thus,
    \begin{align*}
        \langle \sigma^2 \nabla_x g , \nabla_x (\cL_\sigma g ) \rangle &= \sigma^4 \langle \nabla_x g, \nabla_x(\cL_0 g) \rangle + 2 \sigma^3 \langle \nabla_x g , \nabla_x \sigma \rangle \cL_0 g\\
        &+ 2 \sigma^2 \langle \nabla_x g , \nabla_x \sigma  \rangle^2 + 2 \sigma^3 \langle \nabla_x g , \nabla_x^2 \sigma \nabla_x g \rangle \\
        &+ 2 \sigma^3 \langle \nabla_x \sigma , \nabla_x^2 g \nabla_x g \rangle.
    \end{align*} For the first term in the above equality, we use Bochner's identity in~\eqref{eq:auxBochnerA} to get that
    \begin{align*}
        \langle \sigma^2 \nabla_x g , \nabla_x (\cL_\sigma g ) \rangle &= \sigma^4 \left[\frac{1}{2} \cL_0 (|\nabla_x g|^2 ) - \langle \nabla_x g , \nabla_x^2 U \nabla_x g \rangle - |\nabla_x^2 g |^2_F\right] \\ &+ 2 \sigma^3 \langle \nabla_x g , \nabla_x \sigma \rangle \cL_0 g+ 2 \sigma^2 \langle \nabla_x g , \nabla_x \sigma  \rangle^2\\ &+ 2 \sigma^3 \langle \nabla_x g , \nabla_x^2 \sigma \nabla_x g \rangle 
        + 2 \sigma^3 \langle \nabla_x \sigma , \nabla_x^2 g \nabla_x g \rangle.
    \end{align*}
For the first term in the above equality, we use identity~\eqref{eq:auxBochnerB} to get that
\begin{align*}
    \langle \sigma^2 \nabla_x g , \nabla_x (\cL_\sigma g ) \rangle &= \frac{1}{2}\cL_\sigma (\sigma^2 |\nabla_x g|^2)- 4 \sigma^3 \langle \nabla_x \sigma , \nabla_x^2 g \nabla_x g \rangle\\
    &-3\sigma^2 |\nabla_x \sigma|^2 |\nabla_x g |^2 +\sigma^3 |\nabla_x g |^2 \langle \nabla_x \sigma, \nabla_x U \rangle- \sigma^3 |\nabla_x g |^2  \Delta_x \sigma\\ &- \sigma^4 \left[\langle \nabla_x g , \nabla_x^2 U \nabla_x g \rangle + |\nabla_x^2 g |^2_F\right] \\ &+ 2 \sigma^3 \langle \nabla_x g , \nabla_x \sigma \rangle \cL_0 g+ 2 \sigma^2 \langle \nabla_x g , \nabla_x \sigma  \rangle^2+ 2 \sigma^3 \langle \nabla_x g , \nabla_x^2 \sigma \nabla_x g \rangle .
\end{align*}
Integrating both sides w.r.t. $\mu$ and using the fact that
$$\|\cL_\sigma g \|_{L^2(\mu)}^2 = -\int_{\R^d} \langle \sigma^2 \nabla_x g , \nabla_x (\cL_\sigma g ) \rangle \ \ud \mu,$$
which follows from Remark~\ref{rem:ibp}, we have the required result.
\end{proof}

\begin{lemma}\label{lem:bochnerineq}
    Let Assumptions \ref{ass:wpi}, \ref{ass:sigmabounds}, and \ref{ass:curvature}  hold. Recall the constant $K_1$ in~\eqref{eq:defconstants}.  Then 
    $$2\left(1 + \frac{K_1}{m}\right)\|\cL_\sigma  g \|_{L^2(\mu)}^2   \, \geq    \int_{\R^d}\sigma^4 |\nabla_x^2 g |^2_F\, \ud \mu\qquad\text{for all $g \in C^3_c(\R^d)$.}$$
\end{lemma}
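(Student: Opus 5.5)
The plan is to start from the weighted Bochner identity of Lemma~\ref{lem:weightedbochner} and solve it for the Hessian term:
\[
\int_{\R^d}\sigma^4|\nabla_x^2 g|_F^2\,\ud\mu = \|\cL_\sigma g\|^2_{L^2(\mu)} - \int_{\R^d}\sigma^4\langle\nabla_x g,\nabla_x^2U\nabla_x g\rangle\,\ud\mu - \int_{\R^d}\Theta(g,\sigma)\,\ud\mu .
\]
Each term on the right is then bounded either by a small multiple of $\int\sigma^4|\nabla_x^2 g|_F^2\,\ud\mu$, which is absorbed on the left, or by a multiple of the weighted Dirichlet energy $\int\sigma^2|\nabla_x g|^2\,\ud\mu$. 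This energy is then converted to $\|\cL_\sigma g\|^2_{L^2(\mu)}$ via Lemma~\ref{lem:TLD2}(II): since $g\in C^3_c\subset C^2_b\cap\mathrm{Dom}(\bar\cL_\sigma)$ and $\bar\cL_\sigma g=\cL_\sigma g$, we have $\int\sigma^2|\nabla_x g|^2\,\ud\mu\le m^{-1}\|\cL_\sigma g\|^2_{L^2(\mu)}$.

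The curvature term is handled by \ref{ass:curvature}: $\sigma^4\langle\nabla_x g,\nabla_x^2U\nabla_x g\rangle\ge -K\sigma^2|\nabla_x g|^2$, which contributes $K$ to $K_1$. The terms of $\Theta$ are treated one at a time, all with \ref{ass:sigmabounds}:
\begin{itemize}
\item $3\sigma^2|\nabla_x\sigma|^2|\nabla_x g|^2$ and $-2\sigma^2\langle\nabla_x g,\nabla_x\sigma\rangle^2$ are bounded by $M_D^2\sigma^2|\nabla_x g|^2$ times constants.
\item $-\sigma^3|\nabla_x g|^2\langle\nabla_x\sigma,\nabla_x U\rangle$ is at most $M_DM_{U,\sigma}\sigma^2|\nabla_x g|^2$, using $\sigma|\nabla_x U|\le M_{U,\sigma}$.
\item $\sigma^3|\nabla_x g|^2\Delta_x\sigma$ and $-2\sigma^3\langle\nabla_x g,\nabla_x^2\sigma\nabla_x g\rangle$ are controlled by $|\Delta_x\sigma|\le\sqrt d|\nabla_x^2\sigma|_F$ and $\sigma|\nabla_x^2\sigma|_F\le M_H$. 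This gives $(2+\sqrt d)M_H\sigma^2|\nabla_x g|^2$, matching the $M_H$ coefficient in $K_1$.
\item The mixed term $4\sigma^3\langle\nabla_x\sigma,\nabla_x^2 g\nabla_x g\rangle$ is bounded by Young's inequality: $4\sigma^3M_D|\nabla_x^2 g|_F|\nabla_x g|\le\tfrac14\sigma^4|\nabla_x^2 g|_F^2+16M_D^2\sigma^2|\nabla_x g|^2$. The constant may need retuning to match $K_1$.
\end{itemize}

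The delicate term is $-2\sigma^3\langle\nabla_x g,\nabla_x\sigma\rangle\cL_0 g$, because $\cL_0 g$ is not directly controlled. It is rewritten using $\sigma^2\cL_0 g=\cL_\sigma g-2\sigma\langle\nabla_x\sigma,\nabla_x g\rangle$. This produces $-2\sigma\langle\nabla_x g,\nabla_x\sigma\rangle\cL_\sigma g+4\sigma^2\langle\nabla_x g,\nabla_x\sigma\rangle^2$. The second piece is bounded by $4M_D^2\sigma^2|\nabla_x g|^2$. For the first, Young's inequality gives $2M_D\sigma|\nabla_x g||\cL_\sigma g|\le\tfrac12|\cL_\sigma g|^2+2M_D^2\sigma^2|\nabla_x g|^2$.

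An alternative for this term is to write $\cL_0 g=\Delta_x g-\langle\nabla_x U,\nabla_x g\rangle$. Then $|\Delta_x g|\le\sqrt d|\nabla_x^2 g|_F$ together with Young's inequality and $\sigma|\nabla_x U|\le M_{U,\sigma}$ produce the $\sqrt d$- and $M_{U,\sigma}M_D$-dependent pieces of $K_1$, namely $(\sqrt d+2)^2M_D^2$ and $3M_{U,\sigma}M_D$. I expect this alternative to be the route that reproduces $K_1$ exactly. In it the Hessian is absorbed with weight $\tfrac14$, so that the two absorbed quarters total one half.

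Collecting terms gives
\[
\tfrac12\int\sigma^4|\nabla_x^2 g|_F^2\,\ud\mu\le\|\cL_\sigma g\|^2_{L^2(\mu)}+K_1\int\sigma^2|\nabla_x g|^2\,\ud\mu\le\Bigl(1+\tfrac{K_1}{m}\Bigr)\|\cL_\sigma g\|^2_{L^2(\mu)}.
\]
Multiplying by $2$ gives the claim. All integrals are finite because $g$ has compact support. The main obstacle is the bookkeeping of the Young's-inequality constants so that the absorbed Hessian fraction is exactly $\tfrac12$ and the remaining coefficients sum to $K_1$ as defined in~\eqref{eq:defconstants}.
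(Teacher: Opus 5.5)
Your second route for the $\cL_0 g$ term is the paper's proof. You bound $-\Theta$ termwise via \ref{ass:sigmabounds} and expand $\cL_0 g=\Delta_x g-\langle\nabla_x U,\nabla_x g\rangle$ with $|\Delta_x g|\le\sqrt d|\nabla_x^2 g|_F$. You absorb half of the weighted Hessian energy, use \ref{ass:curvature} for the $\nabla_x^2U$ term, and close with Lemma~\ref{lem:TLD2}(II). Your first route puts a coefficient strictly larger than $1$ (namely $\tfrac32$ with your choice) in front of $\|\cL_\sigma g\|^2_{L^2(\mu)}$. So it does not directly give the stated form, and you were right to set it aside.

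The statement fixes the constant $K_1$, however, and the bookkeeping you sketch does not land on it. Two corrections are needed.

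First, what must be bounded above is $-\Theta$. Its term $-3\sigma^2|\nabla_x\sigma|^2|\nabla_x g|^2$ is non-positive and is simply dropped; it should not be bounded by a multiple of $M_D^2$. The only $M_D^2$ contribution outside the Hessian cross terms is then $2\sigma^2\langle\nabla_x g,\nabla_x\sigma\rangle^2\le 2M_D^2\sigma^2|\nabla_x g|^2$. This is the ``$2$'' in $(2+2(\sqrt d+2)^2)M_D^2$.

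Second, consider the two cross terms $4M_D\sigma^3|\nabla_x^2 g|_F|\nabla_x g|$ and $2\sqrt d M_D\sigma^3|\nabla_x^2 g|_F|\nabla_x g|$. Absorbing an equal $\tfrac14$ of the Hessian from each produces $(16+4d)M_D^2$. But $16+4d-2(\sqrt d+2)^2=2(\sqrt d-2)^2$, which is positive unless $d=4$. The paper instead first adds the two cross terms into $2M_D(\sqrt d+2)\sigma^3|\nabla_x^2 g|_F|\nabla_x g|$. It then applies Young's inequality once, bounding this by $\tfrac12\sigma^4|\nabla_x^2 g|_F^2+2M_D^2(\sqrt d+2)^2\sigma^2|\nabla_x g|^2$. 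Equivalently, split the absorbed $\tfrac12$ between the two terms in the ratio $2:\sqrt d$ rather than equally.

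With these two adjustments you get $-\Theta\le\tfrac12\sigma^4|\nabla_x^2 g|_F^2+(K_1-K)\sigma^2|\nabla_x g|^2$ pointwise. Your final two steps, the curvature bound and Lemma~\ref{lem:TLD2}(II), then go through verbatim.
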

\begin{proof}
Recall $\Theta$ from the statement of Lemma~\ref{lem:weightedbochner}.
    \begin{align*}
        -\Theta(g, \sigma) &= - 4 \sigma^3 \langle \nabla_x \sigma , \nabla_x^2 g \nabla_x g \rangle
    -3\sigma^2 |\nabla_x \sigma|^2 |\nabla_x g |^2+\sigma^3 |\nabla_x g |^2 \langle \nabla_x \sigma, \nabla_x U \rangle\\
    &- \sigma^3 |\nabla_x g |^2  \Delta_x \sigma + 2 \sigma^3 \langle \nabla_x g , \nabla_x \sigma \rangle \cL_0 g+ 2 \sigma^2 \langle \nabla_x g , \nabla_x \sigma  \rangle^2+ 2 \sigma^3 \langle \nabla_x g , \nabla_x^2 \sigma \nabla_x g \rangle\\
    &\leq  4 M_D\sigma^3   |\nabla_x^2 g|_F |\nabla_x g| 
    +\sigma^3 |\nabla_x g |^2  |\nabla_x \sigma| | \nabla_x U |+ \sqrt{d} \sigma^3 |\nabla_x g |^2  |\nabla^2_x \sigma|_F
    \\&+ 2 \sigma^3 \langle \nabla_x g , \nabla_x \sigma \rangle \cL_0 g+ 2M_D^2 \sigma^2 | \nabla_x g |^2  + 2 \sigma^3  |\nabla_x g|^2 | \nabla_x^2 \sigma|_F \\
    &\leq 4 M_D\sigma^3   |\nabla_x^2 g|_F |\nabla_x g| + 2 \sigma^3 \langle \nabla_x g , \nabla_x \sigma \rangle \cL_0 g   \\
        &+ \left[(2 + \sqrt{d}) M_H + 2M_D^2 +  M_{U, \sigma } M_D\right] \sigma^2  |\nabla_x g|^2. \numberthis \label{eq:inter13}
    \end{align*}
    The last two inequalities follow from Assumption~\ref{ass:sigmabounds}, the fact that $|\Delta_x \sigma| \leq \sqrt{d}|\nabla^2_x \sigma|_F$ and $\langle v, A w\rangle \leq |v||w||A|_F$ for $v, w \in \R^d$ and $A \in \R^{d \times d}$. 
    Also, we have
    \begin{align*}
        2 \sigma^3 \langle \nabla_x g , \nabla_x \sigma \rangle \cL_0 g &= 2 \sigma^3 \langle \nabla_x g , \nabla_x \sigma \rangle (\Delta_x g -\langle \nabla_x U , \nabla_x g \rangle)\\
        &\leq 2 \sqrt{d} M_D \sigma^3 |\nabla^2_x g|_F|\nabla_x g|
        + 2 \sigma^3 | \nabla_x g |^2 |\nabla_x \sigma | |\nabla_x U |\\
        &\leq 2 \sqrt{d} M_D \sigma^3 |\nabla^2_x g|_F|\nabla_x g|
        + 2 M_{U, \sigma }M_D \sigma^2 | \nabla_x g |^2, \numberthis \label{eq:inter15}
    \end{align*}
    using Assumption~\ref{ass:sigmabounds} for the second term in the last inequality above.
    Thus, from \eqref{eq:inter13} and \eqref{eq:inter15}, we have
    \begin{align*}
        -\Theta(g, \sigma) &\leq 2 M_D(\sqrt d+2 )\sigma^3   |\nabla_x^2 g|_F |\nabla_x g| + \left[(2 + \sqrt{d}) M_H + 2M_D^2+3M_{U, \sigma }M_D\right] \sigma^2  |\nabla_x g|^2,
    \end{align*}
    and using
    \begin{equation*}
        2 M_D(\sqrt{d} + 2)\sigma^3   |\nabla_x^2 g|_F |\nabla_x g| \leq 2M_D^2(\sqrt d + 2)^2 \sigma^2 |\nabla_x g|^2 + \frac{1}{2}\sigma^4|\nabla_x^2 g|_F^2,
    \end{equation*}
    we get
    \begin{equation*}
        - \Theta(g, \sigma) \leq \frac{1}{2}\sigma^4 |\nabla_x^2 g|^2_F + \left(K_1 - K\right)\sigma^2 |\nabla_x g|^2,
    \end{equation*}
    where $K_1$ is defined in~\eqref{eq:defconstants}.
    Hence, using Lemma~\ref{lem:weightedbochner},
    \begin{equation*}
    \begin{aligned}
               \frac{1}{2}\int_{\R^d}\sigma^4 |\nabla_x^2 g |^2_F \ud \mu \leq \|\cL_\sigma  g \|_{L^2(\mu)}^2   + (K_1 - K ) \int_{\R^d} \sigma^2 |\nabla_x g |^2 \, \ud \mu - \int_{\R^d} \sigma^4 \langle \nabla_x g , \nabla_x^2 U \nabla_x g \rangle \ud \mu.
    \end{aligned}
    \end{equation*}
    Using Assumption~\ref{ass:curvature}, the last term is upper-bounded as 
    \begin{equation*}
        \frac{1}{2}\int_{\R^d}\sigma^4 |\nabla_x^2 g |^2_F \ud \mu \leq \|\cL_\sigma  g \|_{L^2(\mu)}^2   + K_1 \int_{\R^d} \sigma^2 |\nabla_x g |^2 \, \ud \mu.
    \end{equation*}
    The final result follows from Lemma~\ref{lem:TLD2}(II).
\end{proof}

\begin{proof}[Proof of Lemma~\ref{lem:3.2}]
Let $g \in C^2_b(\R^d) \cap \mathrm{Dom}(\bar \cL_\sigma)$.
    Recall  $\Ltr g (x, v)= \langle v,  \nabla_x g(x)\rangle $ and
    $$\Ltr^\star f(x, v) =-\langle v,  \nabla_x f \rangle + \langle -\nabla_x H (x, v), v\rangle_+ (f(x, R_{\nabla_x H (x, v)} v ) - f(x, v)),\quad \text{for $f \in C^2_c(\R^d \times \R^d)$.}$$
    Thus, plugging $f (x, v) = \Ltr g(x, v) = \langle v, \nabla_x g(x) \rangle$ in the above, we get
    \begin{align*}
        \Ltr^\star \Ltr g (x, v) &= -\langle v, \nabla^2_x g(x) v \rangle +2 \left\langle -\frac{\nabla_x H (x, v)}{|\nabla_x H(x, v)|}, v\right\rangle_+^2 \langle   \nabla_x H(x, v) , \nabla_x g(x) \rangle.
    \end{align*}
    Hence, using the Cauchy--Schwarz inequality,
    \begin{align*}
        |\Ltr^\star \Ltr g (x, v)| &\leq  |v|^2 |\nabla^2_x g(x)|_F +2 |v|^2 |   \nabla_x H(x, v) || \nabla_x g(x) |.
    \end{align*}
    Therefore,
    \begin{align*}
        |\Ltr^\star \Ltr g (x, v)|^2 &\leq  2|v|^4 |\nabla^2_x g(x)|_F^2 +8 |v|^4 |   \nabla_x H(x, v) |^2| \nabla_x g(x) |^2.
    \end{align*}
    Next, we have that
    \begin{align*}
        |\nabla_x H(x, v)|^2  \leq  2|\nabla_x U (x)|^2 + 2\left(d - \frac{|v|^2}{\sigma^2(x)}  \right)^2  \left|\frac{\nabla_x \sigma(x) }{\sigma(x)}\right|^2 .
    \end{align*}
    Thus, using the fact that $v/\sigma(x)$ has the distribution of a standard $d$-dimensional Gaussian under $\kappa_x$, we have
    \begin{align*}
       \kappa_x\left(\frac{1}{\sigma(x)^4}|\Ltr^\star \Ltr g (x, v)|^2\right) &\leq 2  d(d + 2) |\nabla^2_x g(x) |_F^2 + 16 d(d+2) |\nabla_x U (x)|^2 |\nabla_x g (x)|^2\\
       &+ 32 d(d + 2) (d + 12) \left|\frac{\nabla_x \sigma(x) }{\sigma(x)}\right|^2 |\nabla_x g(x)|^2.   \end{align*}
    where we use $\E[|Z|^4] = d(d+2)$ and that $\E[|Z|^4(d - |Z|^2)^2] = 2 d (d+2) (d + 12)$, for $Z \sim \cN(0, I_d)$. Multiplying by $\sigma^4$ and integrating the above over $\mu$, and using Assumption~\ref{ass:sigmabounds} we have
    \begin{align*}
        \|\Ltr^\star \Ltr g \|^2_{L^2(\mu_\sigma)}&\leq 2d(d+ 2)\biggl[ \int_{\R^d} \sigma^4 |\nabla_x^2 g|^2_F \ \ud \mu +8\left(M_{U, \sigma}^2+2 (d + 12) M_{D}^2\right) \int_{\R^d} \sigma^2 |\nabla_x g|^2 \ud \mu \biggl]. \numberthis \label{eq:inter3201}
    \end{align*}

    Using Lemma~\ref{lem:TLD2}(I), $C^3_c(\R^d)$ is a core for $(\bar \cL_\sigma, \mathrm{Dom}(\bar\cL_\sigma))$.
    Hence, there exists a sequence $(g_n)_{n \in \N} \subset C_c^3(\R^d)$ with
    \begin{equation}
        \|g_n - g \|_{L^2(\mu)} + \|\bar \cL_\sigma (g_n - g )\|_{L^2(\mu)} \rightarrow 0, \quad \text{as } n \rightarrow \infty. \label{eq:inter3202}
    \end{equation}
    For $n , k \in \N$ we have $g_n - g_k \in C^3_c(\R^d)$, so Lemma~\ref{lem:bochnerineq}, Lemma~\ref{lem:TLD2}(II) and Lemma~\ref{lem:TLD2}(I) give
    \begin{align*}
       \||\nabla_x^2g_n - \nabla_x^2g_k|_F\|^2_{L^2(\mu)}\leq  \int_{\R^d} \sigma^4 |\nabla_x^2 (g_n - g_k)|_F^2 \, \ud \mu &\leq 2\left(1 + \frac{K_1}{m}\right)\|\bar\cL_\sigma (g_n - g_k) \|^2_{L^2(\mu)},
    \end{align*}
    using the fact that $\sigma \geq 1$.
    By~\eqref{eq:inter3202}, the right-hand sides above tend to $0$ as $n , k \rightarrow \infty$. Hence $(\nabla^2_x g_n)_{n \in \N}$ is Cauchy in the complete metric space of $\R^{d \times d}$-valued functions on $\R^d$ with the metric $ \||\cdot|_F\|_{L^2( \mu)}$. Denote the corresponding limit by $\cH$.
    Thus, using~\cite[Cor.~2.32]{folland_real_1999}, there exists a subsequence $(\nabla_x^2 g_{n_j})_{j \in \N}$ such that 
    \begin{equation}
        \nabla_x^2 g_{{n_j}} \rightarrow  \cH, \quad \mu\text{-a.s.} \label{eq:inter5801_matrix}
    \end{equation}
    Note that for any compact $\cK \subset \R^d$, and any measurable $h$,
    \begin{align*}
        \int_{\cK} |h|^2 \ud x 
       \leq Z_U \exp\Bigl(\sup_{\cK} U\Bigr) \int_{\cK} |h|^2  \ud \mu,
    \end{align*}
    where we use that $U$ is continuous (so that $\sup_{\cK} U < \infty$) and that $\sigma \geq 1$.
    Hence the convergence $\nabla_x^2 g_n \rightarrow \cH$ in $L^2(\mu)$, implies the same
    convergence in $L^2(\cK, \ud x)$ for every compact $\cK \subset \R^d$.

Let $\phi \in C_c^\infty(\R^d)$ and let the compact set $\cK$ be the support of $\phi$. Fix an orthonormal basis such that $(\partial_i)_{i = 1}^d$ are the corresponding partial derivatives. Hence, using $L^2(\cK, \ud x)$ convergence of $\nabla_x^2 g_n$ as in the earlier paragraph, for $i, j \in \{1, \dots, d\}$,
  integration by parts for $g_n \in C^3_c(\R^d)$ gives
  \begin{align*}
     \int_{\R^d} \cH_{i, j} \phi \ \ud x = \lim_{n \rightarrow \infty} \int_{\R^d} \partial_i\partial_j g_n \, \phi \ \ud x
     =  \lim_{n \rightarrow \infty}\int_{\R^d}  g_n \, \partial_j\partial_i \phi \ \ud x
      =  \int_{\R^d}  g \, \partial_j\partial_i \phi \ \ud x = \int_{\R^d} \partial_i\partial_j g \, \phi \ \ud x,
  \end{align*}
  where the penultimate equality uses~\eqref{eq:inter3202} and the last equality uses $g \in C^2_b(\R^d)$. As $\phi \in C_c^\infty(\R^d)$ was arbitrary,
   $\cH = \nabla_x^2 g$ almost everywhere. Using the fact that $\mu$ is absolutely continuous w.r.t. the Lebesgue measure on $\R^d$, we have that $\cH = \nabla_x^2 g $ $\mu$-almost everywhere. Next, using the subsequential limit~\eqref{eq:inter5801_matrix}, we have that $\sigma^4|\nabla_x^2 g_{n_j}|_F^2 \rightarrow \sigma^4|\nabla_x^2 g|_F^2$ $\mu$-a.e.
   Thus, using Fatou's lemma, we have
    \begin{equation}
    \begin{aligned}
        \int_{\R^d} \sigma^4 |\nabla_x^2 g|_F^2 \ud \mu &\leq \liminf_{j \rightarrow \infty}\int_{\R^d} \sigma^4 |\nabla_x^2 g_{n_j}|_F^2 \ud \mu\\
        &\leq 2\left(1 + \frac{K_1}{m}\right)\liminf_{j \rightarrow \infty}\|\bar\cL_\sigma g_{n_j} \|^2_{L^2(\mu)} = 2\left(1 + \frac{K_1}{m}\right)\|\bar\cL_\sigma g \|^2_{L^2(\mu)}.
    \end{aligned} \label{eq:inter3901}
    \end{equation}
    In the above, the second inequality follows from Lemma~\ref{lem:bochnerineq} and Lemma~\ref{lem:TLD2}(II), and the final equality follows from~\eqref{eq:inter3202}.

    By~\eqref{eq:inter3201}, \eqref{eq:inter3901} and Lemma~\ref{lem:TLD2}(II) we get
    $\|\Ltr^\star \Ltr g \|^2_{L^2(\mu_\sigma)} \leq K_0 \|\bar \cL_\sigma g\|^2_{L^2(\mu)}$,
    $g \in C_b^2(\R^d) \cap \mathrm{Dom}(\bar \cL_\sigma)$.
\end{proof}

\subsection{Operator $\cA$ in~\eqref{eq:defcA} is bounded}\label{sec:proofwtr}
The following lemma on subspaces of function spaces $W_\mathrm{tr}$, $W_{\mathrm{tr}^\star}$ and $C^n_\Pi$, $n \in \N$, defined in~\eqref{eq:defauxspaces} and~\eqref{eq:auxspacesB} is used in the proof below of Lemma~\ref{lem:Afbound}.

\begin{lemma}\label{lem:wtr}
    (I) The inclusions $C^1_c(\R^d \times \R^d) \subseteq W_\mathrm{tr} \cap W_{\mathrm{tr}^\star}$  and  $C^n_c(\R^d \times \R^d) \subseteq C_\Pi^n$, $n \in \N$, hold.\\
   (II) Let~\ref{ass:sigmabounds} hold and pick $g \in \mathrm{Dom}(\bar \cL_\sigma) \cap C^2_b(\R^d) $. Then $g \in W_\mathrm{tr} \cap W_{\mathrm{tr}^\star}$. Moreover, if ~\ref{ass:wpi} and~\ref{ass:curvature} also hold then $\Ltr g \in W_{\mathrm{tr}^\star}$.
\end{lemma}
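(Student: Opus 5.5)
Part (I) is a direct check. For $f\in C^1_c(\R^d\times\R^d)$, the function $f$ is bounded with compact support, so $f\in L^2(\mu_\sigma)$. Both $\Ltr f$ and $\Ltr^\star f$ are built from three pieces: the transport term $\pm\langle v,\nabla_x f\rangle$, the jump rate $\langle\pm\nabla_x H,v\rangle_+$, and the bracket $f(x,R_{\nabla_xH}v)-f(x,v)$.

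\emph{Compact support.} The reflection preserves $|v|$, so both terms of the bracket vanish unless $(x,|v|)$ lies in the projection of $\supp f$. Hence $\Ltr f$ and $\Ltr^\star f$ vanish outside a compact set $\{|x|\le R,|v|\le R\}$.

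\emph{Boundedness.} On that compact set, $\nabla_x H$ is continuous, so $\Ltr f$ and $\Ltr^\star f$ are bounded there. They are therefore in $L^2(\mu_\sigma)$, which gives $C^1_c\subseteq W_{\mathrm{tr}}\cap W_{\mathrm{tr}^\star}$.

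\emph{The inclusion into $C^n_\Pi$.} Write $\Pi f(x)=\int f(x,\sigma(x)u)\,\gamma(\ud u)$. Since $\sigma$ is smooth and $f\in C^n_c$, we may differentiate under the integral sign $n$ times; the integrand's derivatives are bounded and compactly supported in $u$ locally uniformly in $x$, because $\sigma\ge1$. So $\Pi f\in C^n$.

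For part (II), take $g\in\mathrm{Dom}(\bar\cL_\sigma)\cap C^2_b(\R^d)$, viewed as a function of $(x,v)$. It is bounded, so $g\in L^2(\mu_\sigma)$. Since $g$ has no $v$-dependence, the jump bracket vanishes and $\Ltr g=-\Ltr^\star g=\langle v,\nabla_x g\rangle$. Integrating against $\kappa_x$ gives
\[\|\Ltr g\|^2_{L^2(\mu_\sigma)}=\int\sigma^2|\nabla_xg|^2\,\ud\mu.\]
This is finite by Lemma~\ref{lem:dirichletfinite}, which requires only \ref{ass:sigmabounds}. Alternatively, it follows from Lemma~\ref{lem:TLD2}(II) once \ref{ass:wpi} is available. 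Hence $g\in W_{\mathrm{tr}}\cap W_{\mathrm{tr}^\star}$.

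For the last claim, set $h\coloneqq\Ltr g=\langle v,\nabla_x g(x)\rangle$. This function is $C^1$ because $g\in C^2$, and it lies in $L^2(\mu_\sigma)$ by the identity just shown. It remains to show $\Ltr^\star h\in L^2(\mu_\sigma)$. This is exactly what the proof of Lemma~\ref{lem:3.2} controls: the pointwise formula for $\Ltr^\star\Ltr g$ and the bound~\eqref{eq:inter3201} give $\|\Ltr^\star\Ltr g\|^2_{L^2(\mu_\sigma)}\le K_0\|\bar\cL_\sigma g\|^2_{L^2(\mu)}<\infty$ under \ref{ass:wpi}--\ref{ass:curvature}. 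So I would simply invoke Lemma~\ref{lem:3.2} to conclude $\Ltr g\in W_{\mathrm{tr}^\star}$.

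The only delicate point is regularity: membership in $W_{\mathrm{tr}^\star}$ requires $C^1$, and $h$ is only $C^1$, not $C^2$. This is fine, since the definition of $W_{\mathrm{tr}^\star}$ asks for nothing more. One should also make sure that the formula for $\Ltr^\star$ in~\eqref{eq:Ltradjoint}, applied to $h$, is exactly the expression whose norm Lemma~\ref{lem:3.2} bounds. The proof of Lemma~\ref{lem:3.2} plugs $f=\Ltr g$ into that very formula, so the expressions agree and no further work is needed.
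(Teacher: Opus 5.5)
Your proposal is correct and follows the paper's proof essentially step by step. For part (I) you use compact support and local boundedness of $\Ltr f,\Ltr^\star f$, and differentiation under $\Pi f(x)=\int f(x,\sigma(x)z)\,\ud\gamma(z)$ for the $C^n_\Pi$ inclusion. For part (II) you use the identity $\|\Ltr g\|^2_{L^2(\mu_\sigma)}=\int\sigma^2|\nabla_x g|^2\,\ud\mu$ with Lemma~\ref{lem:dirichletfinite}, and then Lemma~\ref{lem:3.2} for $\Ltr g\in W_{\mathrm{tr}^\star}$.

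The only step you leave implicit is the hypothesis $\|\cL_\sigma g\|_{L^2(\mu)}<\infty$ needed to apply Lemma~\ref{lem:dirichletfinite}. It follows from Lemma~\ref{lem:TLD2}(I), which gives $\cL_\sigma g=\bar\cL_\sigma g\in L^2(\mu)$, as the paper notes explicitly.
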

\begin{proof}
 \underline{Proof of part (I):}   
The first statement in part (I) utilises the fact that the integrals $\|\Ltr f\|_{L^2(\mu_\sigma)}$ and $\|\Ltr^\star f\|_{L^2(\mu_\sigma)}$ are finite as $\Ltr f $ and $\Ltr^\star f$ are compactly supported continuous functions for $f \in C_c(\R^d \times \R^d)$.
The second statement in part (I) follows from a straightforward application of the dominated convergence theorem: for $f \in C_c^1(\R^d \times \R^d)$, the function $v \mapsto f(x, v)$ has compact support for each $x \in \R^d$. Hence $f \in C_\Pi$. 

Consider $f\in C_c^n(\R^d\times\R^d)$ for $n\geq1$.
We have the representation $\Pi f(x) = \int_{\R^d } f(x, \sigma(x) z) \,\ud\gamma( z) $, where $\gamma$ is the standard Gaussian distribution on $\R^d$.  Thus the gradient of $\Pi f$ in $x$ takes the form:  $$\nabla_x  (\Pi f)(x) = \int_{\R^d } \left[(\nabla_x f)(x, \sigma(x) z) + \nabla_x \sigma (x) \langle z , (\nabla_v f)(x, \sigma(x) z)\rangle\right] \,\ud \gamma(z).$$
Here $\nabla_v f$ is the gradient of $f$ in the $v$-component.
The differentiation under the integral is justified by the dominated convergence theorem as the integrand is compactly supported in $z\in\R^d$. Higher order derivatives follow similarly.

\noindent \underline{Proof of part (II):} Recall that $\kappa_x $ is the centered Gaussian distribution on $\R^d$ with standard deviation $\sigma(x)$ for $x \in \R^d$. For $g \in \mathrm{Dom}(\bar \cL_\sigma) \cap C_b^2(\R^d)$,
\begin{align*}
    \|\Ltr g \|_{L^2(\mu_\sigma)}^2 &= \int_{\R^d \times \R^d } \langle v, \nabla_x g(x) \rangle^2 \, \ud\kappa_x(v) \, \ud \mu(x) = \int_{\R^d} \sigma^2 |\nabla_x g|^2 \,\ud  \mu < \infty,
    \end{align*}
using Lemma~\ref{lem:dirichletfinite} (which can be used as due to Lemma~\ref{lem:TLD2} (I), we have $\|\cL_\sigma g \|_{L^2(\mu)} = \|\bar \cL_\sigma g \|_{L^2(\mu)} < \infty$ as $g \in \mathrm{Dom}(\bar \cL_\sigma)$). Also, $g \in L^2(\mu)$. Hence, we have $g \in W_\mathrm{tr}$. Note that $g$ is independent of the $v$ variable. Hence, we have that $\Ltr^\star  g = - \Ltr g$. Therefore, $g \in W_{\mathrm{tr}^\star}$ as well. 
Additionally, if~\ref{ass:wpi} and~\ref{ass:curvature} hold, then $\Ltr g \in W_{\mathrm{tr}^\star}$ due to Lemma~\ref{lem:3.2}. 
\end{proof}

\begin{proof}[Proof of Lemma~\ref{lem:Afbound}]
Recall that $\Pi$ is the velocity-averaging operator from~\eqref{eq:Lv}. 
As $f \in C_c^2(\R^d \times \R^d)$, it can be checked by explicit calculation that $\Pi\, \Ltr^\star f \in  C_c^1(\R^d)$.
Hence, from Lemma~\ref{lem:TLD2}(III) and the definition of $\cA $ in~\eqref{eq:defcA}, it follows that $\cA f \in C^2_b(\R^d) \cap \mathrm{Dom}(\bar \cL_\sigma)$. Using $\cA f = \Pi \cA f $, this also implies that $\cA f \in C^2_\Pi$.
Using Lemma~\ref{lem:wtr}, we get that $\cA f \in W_\mathrm{tr}$ and $\Ltr \cA f \in W_{\mathrm{tr}^\star}$. In particular, $\|\Ltr \cA f \|_{L^2(\mu_\sigma)} < \infty $.
Using~\eqref{eq:L3}, the fact that $\cA f = \Pi \cA f $ and~\eqref{eq:L2}, we get that $\| \Ltr \cA f\|^2_{L^2(\mu_\sigma)} = \langle \cA f , \Pi \Ltr^\star \Ltr \Pi  \cA f \rangle_{L^2(\mu_\sigma)} = \langle \cA f , (-\cL_\sigma)  \cA f \rangle_{L^2(\mu)}$. Hence
\begin{align*}
     m\|\cA f \|_{L^2(\mu_\sigma )}^2 + \|\Ltr \cA f\|_{L^2( \mu_\sigma)}^2\numberthis \label{eq:inter31} &=  m\| \cA f \|_{L^2(\mu )}^2 + \langle  \cA f, (- \cL_\sigma)  \cA f\rangle_{L^2(\mu)} \\
    &=\langle   \cA f, (m- \cL_\sigma)  \cA f\rangle_{L^2(\mu)}.
\end{align*}
Because of the definition~\eqref{eq:defcA} and Lemma~\ref{lem:TLD2}(I) it follows that $(m - \cL_\sigma) \cA f = \Pi \,\Ltr^\star f$. Hence,
\begin{align*}
    m\|\cA f \|_{L^2(\mu_\sigma )}^2 + \|\Ltr \cA f\|_{L^2( \mu_\sigma)}^2&= \langle   \cA f, \Pi \, \Ltr^\star  f\rangle_{L^2(\mu)}\\
    &= \langle  \cA f,  \Ltr^\star  f\rangle_{L^2(\mu_\sigma)}\\
    &= \langle \Ltr \cA f,   f  \rangle_{L^2(\mu_\sigma)}\numberthis\label{eq:inter16a}.
\end{align*}
where the last step follows from~\eqref{eq:L3}.
(Note that $f \in C^2_c(\R^d \times \R^d) \subseteq W_{\mathrm{tr}^\star}$. Also, as earlier stated, $\cA f \in W_\mathrm{tr}$.)
Therefore, by the Cauchy--Schwarz inequality, we have
\begin{align*}
    m\|\cA f \|_{L^2(\mu_\sigma )}^2 + \|\Ltr \cA f\|_{L^2(\mu_\sigma)}^2 = \langle \Ltr  \cA f,   f  \rangle_{L^2(\mu_\sigma)}
    \leq \| \Ltr \cA f\|_{L^2(\mu_\sigma)} \|   f \|_{L^2(\mu_\sigma)}. \numberthis \label{eq:inter32}
\end{align*}
 Thus, comparing the left-hand side of \eqref{eq:inter31} with \eqref{eq:inter32}, we get \eqref{eq:AfboundB}.
To get~\eqref{eq:AfboundA}, we use
\begin{align*}
    2 \sqrt m \|\cA f \|_{L^2(\mu_\sigma )}\|\Ltr \cA f\|_{L^2(\mu_\sigma)} &\leq m\|\cA f \|_{L^2(\mu_\sigma )}^2 + \|\Ltr \cA f\|_{L^2(\mu_\sigma)}^2 \\
    &\leq \| \Ltr \cA f\|_{L^2(\mu_\sigma)} \|   f \|_{L^2(\mu_\sigma)},
\end{align*}
which follows from~\eqref{eq:inter32}. 
The unique extension of the operators $\cA$ and $\Ltr \cA$ to the Hilbert space $L^2(\mu_\sigma)$ follows from the fact that $C^2_c(\R^d \times \R^d)$ is dense in $L^2(\mu_\sigma)$.
\end{proof}

\subsection{Proof of Theorem~\ref{thm:coercivity}}\label{sec:proofcoercivity}
The following lemmas are analogous to the intermediate steps found in the hypocoercivity approach developed in~\cite{dolbeault_hypocoercivity_2010}.

\begin{lemma}\label{lem:DMSboundA} Let~\ref{ass:wpi}, \ref{ass:sigmabounds} and~\ref{ass:curvature} hold.
    For all $f \in C^1_c(\R^d \times \R^d)$,
    \begin{equation}|\langle \cA f, \cL f \rangle_{L^2(\mu_\sigma )}| \leq \|f- \Pi f\|_{L^2(\mu_\sigma)}^2.\label{eq:DMSboundA}\end{equation}
\end{lemma}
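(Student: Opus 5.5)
The plan is to follow the standard Dolbeault--Mouhot--Schmeiser argument. The main ingredients are that $\cA f$ depends on $x$ only, the lift identities of Lemma~\ref{lem:lifts}, and the energy identity~\eqref{eq:inter16a} from the proof of Lemma~\ref{lem:Afbound}.

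\textbf{Regularity reduction.} First I would work with $f\in C^2_c(\R^d\times\R^d)$, where $\cA f$ is given by~\eqref{eq:defcA}. From the proof of Lemma~\ref{lem:Afbound}, $\cA f\in C^2_b(\R^d)\cap\mathrm{Dom}(\bar\cL_\sigma)$ and $\cA f=\Pi\cA f$. The case $f\in C^1_c$ would then follow by approximating in the graph norm of $\Ltr$, using the boundedness of $\cA$ and $\Ltr\cA$ from Lemma~\ref{lem:Afbound}. I flag this density step as a technical point to check rather than a conceptual one.

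\textbf{Step 1: the refresh part vanishes.} Write
\[
\langle \cA f,\cL f\rangle_{L^2(\mu_\sigma)}=\langle \cA f,\Ltr f\rangle_{L^2(\mu_\sigma)}+\lambda_\rr\langle \cA f,(\Pi-I)f\rangle_{L^2(\mu_\sigma)} .
\]
Since $\Pi$ is self-adjoint on $L^2(\mu_\sigma)$ and $\cA f=\Pi\cA f$, the second term equals $\lambda_\rr\langle \cA f,\Pi(\Pi-I)f\rangle=0$.

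\textbf{Step 2: isolate $f-\Pi f$ in the transport part.} By Lemma~\ref{lem:wtr}(II), $\cA f\in W_{\mathrm{tr}^\star}$, and $f\in W_\mathrm{tr}$. So~\eqref{eq:L3} gives $\langle \cA f,\Ltr f\rangle=\langle\Ltr^\star\cA f,f\rangle$. Because $\cA f$ is $v$-independent, $\Ltr^\star\cA f=-\Ltr\cA f=-\langle v,\nabla_x\cA f\rangle$.

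Now split $f=\Pi f+(f-\Pi f)$. The term $\langle \langle v,\nabla_x \cA f\rangle,\Pi f\rangle_{L^2(\mu_\sigma)}$ vanishes, because integrating the odd function $v\mapsto\langle v,\nabla_x\cA f(x)\rangle$ against the centred Gaussian $\kappa_x$ gives zero. This is the same computation as in~\eqref{eq:L1}. Hence
\[
\langle \cA f,\cL f\rangle=-\langle \Ltr\cA f,f-\Pi f\rangle,
\qquad\text{so}\qquad
|\langle \cA f,\cL f\rangle|\le \|\Ltr\cA f\|\,\|f-\Pi f\| .
\]

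\textbf{Step 3 (main obstacle): the sharpened bound $\|\Ltr\cA f\|_{L^2(\mu_\sigma)}\le\|f-\Pi f\|_{L^2(\mu_\sigma)}$.} This is the crux; the bound $\|\Ltr\cA f\|\le\|f\|$ from~\eqref{eq:AfboundB} is not enough. I would revisit~\eqref{eq:inter16a}:
\[
m\|\cA f\|^2+\|\Ltr\cA f\|^2=\langle \cA f,\Pi\Ltr^\star f\rangle_{L^2(\mu)} .
\]
By~\eqref{eq:L1}, $\Pi\Ltr^\star\Pi f=0$. This applies because $\Pi f\in C^1_\Pi$ by Lemma~\ref{lem:wtr}(I). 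Therefore $\Pi\Ltr^\star f=\Pi\Ltr^\star(f-\Pi f)$, and the right-hand side becomes $\langle \cA f,\Ltr^\star(f-\Pi f)\rangle_{L^2(\mu_\sigma)}$.

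To apply~\eqref{eq:L3} again I need $f-\Pi f\in W_{\mathrm{tr}^\star}$. I would verify this directly. Since $f$ has compact support, $\Pi f$ has compact $x$-support and is $C^1$, and $\sigma$ is bounded there. Hence $\Ltr^\star\Pi f=-\langle v,\nabla_x\Pi f\rangle$ has $L^2(\mu_\sigma)$-norm $\bigl(\int\sigma^2|\nabla_x\Pi f|^2\,\ud\mu\bigr)^{1/2}<\infty$. Together with $\cA f\in W_\mathrm{tr}$, this gives
\[
m\|\cA f\|^2+\|\Ltr\cA f\|^2=\langle \Ltr\cA f,f-\Pi f\rangle\le\|\Ltr\cA f\|\,\|f-\Pi f\| .
\]
Dropping $m\|\cA f\|^2$ and dividing by $\|\Ltr\cA f\|$ (the case $\|\Ltr\cA f\|=0$ is trivial) yields the claimed bound. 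Combining it with Step 2 gives~\eqref{eq:DMSboundA}.
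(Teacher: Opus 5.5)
Your proof is correct and is essentially the paper's argument. The refresh term vanishes because $\cA f=\Pi\cA f$ and $\Pi$ is self-adjoint. The transport term is reduced to $\langle \Ltr\cA f, f-\Pi f\rangle_{L^2(\mu_\sigma)}$ using \eqref{eq:L1} (or oddness in $v$) together with \eqref{eq:L3}, and the passage from $C^2_c$ to $C^1_c$ is the same density step.

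The only difference is how you obtain $\|\Ltr\cA f\|_{L^2(\mu_\sigma)}\le\|f-\Pi f\|_{L^2(\mu_\sigma)}$. The paper applies the norm-one bounded extension of $\Ltr\cA$ from Lemma~\ref{lem:Afbound} to $f-\Pi f$, after arguing that $\cA\Pi f=0$. You instead rerun the energy identity \eqref{eq:inter16a} with \eqref{eq:L1} inserted. This keeps you within $f\in C^2_c$ and avoids having to identify the extended operator on the non-compactly supported function $f-\Pi f$.
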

\begin{proof}
Assume first that $f \in C^2_c(\R^d \times \R^d )$. By  Lemma~\ref{lem:wtr}(I) we have  $ f \in C^2_\Pi$. Note that $\cA f = \Pi \, \cA  f$, and from the definition of $\Pi$ in~\eqref{eq:Lv}, it follows that $\Pi$ is a self-adjoint operator on $L^2(\mu_\sigma)$.
    This gives us that
    $$|\langle \cA f, \Ltr  f \rangle_{L^2(\mu_\sigma)}|= |\langle \cA f, \Pi\, \Ltr  f \rangle_{L^2(\mu_\sigma )}| = |\langle \cA (f- \Pi f ), \Pi\, \Ltr ( f - \Pi f)  \rangle_{L^2(\mu_\sigma )}|,$$
    where the second equality uses~\eqref{eq:L1} and the identity $\cA \,\Pi f = 0$ (definition~\eqref{eq:defcA} of $\cA$ and~\eqref{eq:L1} yield $\cA g_\chi=0$, where $g_\chi(x,v)=(\Pi f)(x) \chi(v)$  for any compactly supported smooth function $\chi:\R^d\to\R$, implying that  the unique continuous extension in Lemma~\ref{lem:Afbound} of $\cA$ to $L^2(\mu_\sigma)$ satisfies $\cA\, \Pi f = 0$).

    An explicit calculation implies $\Pi \, \Ltr^\star f\in C^1_c(\R^d)$. Hence, $\cA ( f- \Pi f) = (m - \bar \cL_\sigma)^{-1}\Pi \Ltr^\star f\in C^2_b(\R^d) \cap \mathrm{Dom}(\bar\cL_\sigma)$ by Lemma~\ref{lem:TLD2}(III).
     Since $f$ has a compact support, $\Pi f$ has compact support in the $x$ variable and is independent of the $v$ variable. Therefore, $\|\Ltr \Pi f\|_{L^2(\mu_\sigma)}^2 = \int_{\R^d} \sigma^2 |\nabla_x \Pi f |^2  \ud \mu < \infty$, hence  $f - \Pi f \in W_\mathrm{tr}$. Also, by Lemma~\ref{lem:wtr}, $\cA (f - \Pi f ) \in W_{\mathrm{tr}^\star}$. Hence, from \eqref{eq:L3}, we have
    \begin{align*}
        |\langle \cA f, \Ltr  f \rangle_{L^2(\mu_\sigma )}|
        &= |\langle \Ltr^\star \cA (f -  \Pi f ) , f- \Pi f  \rangle_{L^2(\mu_\sigma )}|\\
        &= |\langle \Ltr \cA (f -  \Pi f ) , f- \Pi f  \rangle_{L^2(\mu_\sigma )}|\\
        &\leq \|\Ltr \cA (f- \Pi f)\|_{L^2(\mu_\sigma)} \|f- \Pi f\|_{L^2(\mu_\sigma)},
    \end{align*}
    where the second equality uses $\Ltr \cA f = - \Ltr^\star \cA f$ which follows from the fact that $\cA f $ is independent of the velocity $v$. 
    By Lemma~\ref{lem:Afbound}, the operator $\Ltr \cA$ has a unique extension to $L^2(\mu_\sigma)$ as a bounded operator with norm at most one. Applying this fact to the first factor in the upper bound above yields
    $   |\langle \cA f, \Ltr  f \rangle_{L^2(\mu_\sigma )}|\leq \|f- \Pi f\|_{L^2( \mu_\sigma)}^2 .$
 Using $\cA f = \Pi\, \cA f$ as earlier, we have
    $$\langle \cA f,  \Lv  f \rangle_{L^2(\mu_\sigma )} = \langle \cA f,  \Pi \, \Lv  f \rangle_{L^2(\mu_\sigma )} = 0,$$
    using the fact that $\Pi\, \Lv f  =  \Pi(\Pi - I) f = 0$. Hence~\eqref{eq:DMSboundA} holds for $f \in C^2_c(\R^d \times \R^d)$.
    
      Note that $\cA$ is bounded on $L^2(\mu_\sigma)$ due to Lemma~\ref{lem:Afbound}, hence the map $h,g \rightarrow \langle h, \cA g \rangle_{L^2(\mu_\sigma)}$ on $L^2(\mu_\sigma ) \times L^2(\mu_\sigma )$ is continuous. Also, by~\cite[Thm~1.3.2]{hormander_analysis_2003}, the space $C^2_c(\R^d \times \R^d)$ is dense in $C^1_c(\R^d \times \R^d)$ in the norm $ \|\cdot \|_\infty + \|\nabla_x (\cdot) \|_\infty$ . Note that the left and right-hand sides of~\eqref{eq:DMSboundA} involve only the first derivative of $f$. Hence, we get the result for $f \in C^1_c(\R^d \times \R^d)$.
    \end{proof}

\begin{lemma}\label{lem:DMSboundB} Let~\ref{ass:wpi},  \ref{ass:sigmabounds} and~\ref{ass:curvature} hold. Let the constant $K_0$ be as in~\eqref{eq:defconstants}.
    Then, for all $f \in C^1_c(\R^d \times \R^d)$ with $\mu_\sigma(f) = 0$,
    \begin{equation}\langle \cA\,\cL f, f \rangle_{L^2( \mu_\sigma )} \geq \frac{1}{2} \|\Pi f \|_{L^2(\mu_\sigma)}^2 - \left(\frac{3}{2}\sqrt{K_0} + \frac{\lambda_\rr}{2\sqrt m }\right) \|\Pi f \|_{L^2( \mu_\sigma)} \| f - \Pi f \|_{L^2( \mu_\sigma )} .\label{eq:DMSboundB}\end{equation}
\end{lemma}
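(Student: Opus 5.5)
The plan is to split $\cL = \Ltr + \lambda_\rr\Lv$ and treat the refresh part and the transport part separately. Both will be expressed through the single auxiliary function
$$\varphi \coloneqq (m-\bar\cL_\sigma)^{-1}\Pi f.$$
First assume $f\in C^2_c(\R^d\times\R^d)$ and pass to $f\in C^1_c$ at the end, by the same density argument used at the end of the proof of Lemma~\ref{lem:DMSboundA}. Then $\Pi f\in C^2_c(\R^d)$, and $\mu(\Pi f)=\mu_\sigma(f)=0$. Lemma~\ref{lem:TLD2}(III) therefore gives
$$\varphi\in C^2_b(\R^d)\cap\mathrm{Dom}(\bar\cL_\sigma), \qquad \mu(\varphi)=0, \qquad \|\varphi\|_{L^2(\mu)}\le (2m)^{-1}\|\Pi f\|_{L^2(\mu)}.$$
From $(m-\bar\cL_\sigma)\varphi=\Pi f$ we also get
$$\|\bar\cL_\sigma\varphi\|_{L^2(\mu)} = \|m\varphi-\Pi f\|_{L^2(\mu)} \le \tfrac32\|\Pi f\|_{L^2(\mu)}.$$
Throughout we use that $\cA g$ is $v$-independent. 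Hence $\langle \cA g, f\rangle_{L^2(\mu_\sigma)}=\langle \cA g,\Pi f\rangle_{L^2(\mu)}$, and $\|\Pi f\|_{L^2(\mu_\sigma)}=\|\Pi f\|_{L^2(\mu)}$.

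\emph{Refresh part.} Since $\Ltr^\star\Pi f$ and $\Pi\Ltr^\star\Pi f=0$ by~\eqref{eq:L1}, we have $\cA\Lv f = (m-\bar\cL_\sigma)^{-1}\Pi\Ltr^\star(\Pi f - f) = -\cA(f-\Pi f)$. Then Cauchy--Schwarz and~\eqref{eq:AfboundA} give
$$\lambda_\rr|\langle \cA\Lv f,f\rangle_{L^2(\mu_\sigma)}| \le \frac{\lambda_\rr}{2\sqrt m}\,\|f-\Pi f\|_{L^2(\mu_\sigma)}\,\|\Pi f\|_{L^2(\mu_\sigma)}.$$

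\emph{Transport part.} The first step is to move the resolvent onto $\Pi f$:
$$\langle \cA\Ltr f,\Pi f\rangle_{L^2(\mu)}=\langle \Pi\Ltr^\star\Ltr f,\varphi\rangle_{L^2(\mu)}.$$
This follows from the symmetry of $\bar\cL_\sigma$ in Lemma~\ref{lem:TLD2}(II), applied to the two $C^2_b\cap\mathrm{Dom}$ functions $(m-\bar\cL_\sigma)^{-1}\Pi\Ltr^\star\Ltr f$ and $\varphi$. Next, $\Pi$ is self-adjoint and $\varphi$ is $v$-independent, so this equals $\langle \Ltr^\star\Ltr f,\varphi\rangle_{L^2(\mu_\sigma)}$. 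Applying~\eqref{eq:L3} turns it into $\langle \Ltr f,\Ltr\varphi\rangle_{L^2(\mu_\sigma)}$; this is legitimate because $\varphi\in W_\mathrm{tr}\cap W_{\mathrm{tr}^\star}$ and $\Ltr\varphi\in W_{\mathrm{tr}^\star}$ by Lemma~\ref{lem:wtr}(II). Now write $\Ltr f=\Ltr\Pi f+\Ltr(f-\Pi f)$ and bound the two pieces.

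For the first piece, use~\eqref{eq:L3} together with~\eqref{eq:L2} (as in the proof of Lemma~\ref{lem:Afbound}):
$$\langle\Ltr\Pi f,\Ltr\varphi\rangle_{L^2(\mu_\sigma)}=\langle \Pi f,(-\bar\cL_\sigma)\varphi\rangle_{L^2(\mu)} = \|\Pi f\|^2_{L^2(\mu)}-m\langle\Pi f,\varphi\rangle_{L^2(\mu)} \ge \tfrac12\|\Pi f\|^2_{L^2(\mu)}.$$
The last inequality uses the resolvent bound on $\varphi$.

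For the second piece, use~\eqref{eq:L3} with $h=\Ltr\varphi\in W_{\mathrm{tr}^\star}$ and $g=f-\Pi f\in W_\mathrm{tr}$, then Lemma~\ref{lem:3.2}:
$$|\langle \Ltr(f-\Pi f),\Ltr\varphi\rangle_{L^2(\mu_\sigma)}| = |\langle f-\Pi f,\Ltr^\star\Ltr\varphi\rangle_{L^2(\mu_\sigma)}| \le \sqrt{K_0}\,\|\bar\cL_\sigma\varphi\|_{L^2(\mu)}\,\|f-\Pi f\|_{L^2(\mu_\sigma)}.$$
By the bound on $\|\bar\cL_\sigma\varphi\|_{L^2(\mu)}$ above, this is at most $\tfrac32\sqrt{K_0}\,\|\Pi f\|\,\|f-\Pi f\|$. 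Adding the refresh and transport estimates gives~\eqref{eq:DMSboundB}.

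The main obstacle is the functional-analytic bookkeeping, not the algebra. Each use of~\eqref{eq:L3} and of the symmetry of the resolvent needs the relevant functions to lie in $W_\mathrm{tr}$, $W_{\mathrm{tr}^\star}$ or $C^2_b\cap\mathrm{Dom}(\bar\cL_\sigma)$. The delicate case is $\Pi\Ltr^\star\Ltr f$, which must be $C^1_b$ so that Lemma~\ref{lem:TLD2}(III) applies. For $f\in C^2_c$ I expect this to follow from an explicit computation like the one in Lemma~\ref{lem:wtr}(I). If it does not, I would instead avoid the resolvent swap by writing $\langle \cA\Ltr f,\Pi f\rangle=\langle \Ltr f,\Ltr\varphi\rangle$ directly via the defining identity $(m-\bar\cL_\sigma)\varphi=\Pi f$. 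Finally, the extension from $C^2_c$ to $C^1_c$ uses boundedness of $\cA$ and convergence of $\cL f$ in $L^2(\mu_\sigma)$ under $C^1$-approximation.
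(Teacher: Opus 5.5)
Your overall architecture matches the paper's, and your final estimates are correct. You bound the refresh term by $\frac{1}{2\sqrt m}\|f-\Pi f\|\,\|\Pi f\|$ via Lemma~\ref{lem:Afbound}. You rewrite the transport term as $\langle \Ltr f,\Ltr\varphi\rangle_{L^2(\mu_\sigma)}$. You then get $\frac12\|\Pi f\|^2$ from the $\Pi f$ piece and $\frac32\sqrt{K_0}$ from the $f-\Pi f$ piece via \eqref{eq:L3} and Lemma~\ref{lem:3.2}.

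The gap is in the step you flagged yourself, and your primary route through it fails. For $f\in C^2_c(\R^d\times\R^d)$, the function $\Ltr f$ is in general not $C^1$. The factor $\langle\nabla_x H,v\rangle_+$ and the map $u\mapsto R_u v$, which is discontinuous at $u=0$, destroy differentiability, e.g.\ near the zero set of $\nabla_x H$. That set is typically nonempty: for $\sigma=\langle x\rangle$ and $U=(d+r)\log\langle x\rangle$ one has $\nabla_x H(0,v)=0$ for all $v$. Consequently:
\begin{itemize}
\item $\Ltr f\notin W_{\mathrm{tr}^\star}$, and $\Ltr^\star\Ltr f$ and $\Pi\Ltr^\star\Ltr f$ are not defined in the classical sense used in the paper.
\item \eqref{eq:L3} cannot be applied with $h=\Ltr f$.
\item Lemma~\ref{lem:TLD2}(III) cannot be invoked, since nothing in the paper gives $\Pi\Ltr^\star\Ltr f\in C^1_b(\R^d)$.
\item The computation of Lemma~\ref{lem:wtr}(I) does not transfer, because it differentiates under the integral and needs a $C^n_c$ integrand.
\end{itemize}
There is also a more basic problem. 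Since $\Ltr f\notin C^2_c$, the object $\cA\Ltr f$ exists only through the bounded extension of Lemma~\ref{lem:Afbound}. You cannot simply identify it with $(m-\bar\cL_\sigma)^{-1}\Pi\Ltr^\star\Ltr f$ and apply the symmetry of Lemma~\ref{lem:TLD2}(II) to it.

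Your fallback names the right target, $\langle\cA\Ltr f,\Pi f\rangle_{L^2(\mu_\sigma)}=\langle\Ltr f,\Ltr\varphi\rangle_{L^2(\mu_\sigma)}$, which is exactly the identity the paper proves for its Claim. But ``directly via the defining identity'' is not an argument, since $\cA\Ltr f$ is not known to lie in $\mathrm{Dom}(\bar\cL_\sigma)$. The missing idea is duality plus density. Take $h$ smooth and compactly supported. Then $\Pi\Ltr^\star h\in C^1_c(\R^d)$; the paper writes it as $\nabla_x^\star M_h$ with $M_h(x)=\int v\,h(x,v)\,\kappa_x(\ud v)$. Hence $\cA h\in C^2_b(\R^d)\cap\mathrm{Dom}(\bar\cL_\sigma)$, and
\[
\langle\cA h,\Pi f\rangle_{L^2(\mu_\sigma)}=\langle \cA h,(m-\cL_\sigma)\varphi\rangle_{L^2(\mu)}=\langle (m-\cL_\sigma)\cA h,\varphi\rangle_{L^2(\mu)}=\langle \Ltr^\star h,\varphi\rangle_{L^2(\mu_\sigma)}=\langle h,\Ltr\varphi\rangle_{L^2(\mu_\sigma)}.
\]
This uses Lemma~\ref{lem:TLD2}(I),(II) and \eqref{eq:L3}, the latter being legitimate because $h\in W_{\mathrm{tr}^\star}$ and $\varphi\in W_\mathrm{tr}$. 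Both ends are continuous in $h\in L^2(\mu_\sigma)$: the left by Lemma~\ref{lem:Afbound}, the right because $\Ltr\varphi\in L^2(\mu_\sigma)$. So the identity extends to all $h\in L^2(\mu_\sigma)$, in particular to $h=\Ltr f$, which is bounded with compact support. With this replacing your chain through $\Ltr^\star\Ltr f$, the rest of your proof goes through. Your splitting $\Ltr f=\Ltr\Pi f+\Ltr(f-\Pi f)$ is then just the paper's splitting $f=\Pi f+(f-\Pi f)$ read through \eqref{eq:L3}.
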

\begin{proof}
Assume first that $f \in C_c^3(\R^d \times \R^d)\subseteq C_\Pi^3$ (see definition of $C_\Pi^3$ in~\eqref{eq:auxspacesB}) with $\mu_\sigma(f) = 0$.
Note that $\cA f$ does not depend on $v$ by definition~\eqref{eq:defcA}. Hence $ \cA f = \Pi \, \cA  f$, implying that $\Pi \cA=\cA$ on $L^2(\mu_\sigma)$, since $\Pi,\cA$ are bounded operators and $C_c^3(\R^d \times \R^d)$ is dense in $L^2(\mu_\sigma)$. From the definition in~\eqref{eq:Lv} it follows that $\Pi$ is a self-adjoint operator on $L^2(\mu_\sigma)$. Hence, the left-hand side of the inequality in the statement of the lemma can be decomposed as
    \begin{align*}
         \langle \cA\,\cL f, f \rangle_{L^2( \mu_\sigma )}& =\lambda_\rr \langle \cA\, \Lv f, \Pi f \rangle_{L^2( \mu_\sigma )} + \langle \cA\, \Ltr f, \Pi f \rangle_{L^2( \mu_\sigma )}
        \numberthis \label{eq:inter39}
    \end{align*}
    For the first term in the above, by the Cauchy--Schwarz inequality and Lemma~\ref{lem:Afbound}, we have
    \begin{align*}
        |\langle \cA\, \Lv f, \Pi f \rangle_{L^2( \mu_\sigma )} |
        &\leq \|\cA \Lv f \|_{L^2(\mu_\sigma)} \|\Pi f \|_{L^2(\mu_\sigma)}
        \leq \frac{1}{2\sqrt m} \|\Lv f \|_{L^2(\mu_\sigma)} \|\Pi f \|_{L^2(\mu_\sigma)}\\
        &= \frac{1}{2\sqrt m} \| f - \Pi f  \|_{L^2(\mu_\sigma)} \|\Pi f \|_{L^2(\mu_\sigma)}. \numberthis \label{eq:inter40}
    \end{align*}
   
    For the second term on the right-hand side in~\eqref{eq:inter39}, we need the following fact proved below.

    \noindent\textbf{Claim}.    For $f \in C^3_c(\R^d \times \R^d)$, it holds $(m - \bar \cL_\sigma)^{-1} \Pi f \in C^{2}_b(\R^d) \cap \mathrm{Dom}(\bar \cL_\sigma)$,  $\Ltr (m - \bar \cL_\sigma)^{-1} \Pi f \in W_{\mathrm{tr}^\star}$ and $$\langle \cA\, \Ltr f, \Pi f \rangle_{L^2(\mu_\sigma )} = \langle f,  \Ltr^\star\Ltr (m - \bar \cL_\sigma)^{-1} \Pi f \rangle_{L^2(\mu_\sigma )}.$$
   
    Using the Claim, we have the following decomposition of the second term in~\eqref{eq:inter39}:
    \begin{equation}
        \langle \cA\, \Ltr f, \Pi f \rangle_{L^2(\mu_\sigma )} =  A + \langle \Pi f , \Ltr^\star\Ltr (m - \bar \cL_\sigma)^{-1} \Pi f \rangle_{L^2(\mu_\sigma )},
        \label{eq:inter5801}
    \end{equation}
    where $A\coloneqq \langle f - \Pi f,  \Ltr^\star\Ltr (m - \bar \cL_\sigma)^{-1} \Pi f \rangle_{L^2(\mu_\sigma )}$.
    By the Cauchy--Schwarz inequality and Lemma~\ref{lem:3.2}, the term $A$ in~\eqref{eq:inter5801} satisfies
    \begin{align*}
        |A| & \leq \|\Ltr^\star \Ltr (m - \bar\cL_\sigma)^{-1} \Pi f \|_{L^2(\mu_\sigma)} \|f - \Pi f \|_{L^2(\mu_\sigma)}\\
        &\leq \sqrt{K_0} \|(-\bar \cL_\sigma) (m - \bar \cL_\sigma)^{-1}  \Pi f \|_{L^2( \mu )} \|f - \Pi f \|_{L^2(\mu_\sigma)}.
    \end{align*}
    Using the triangle inequality, the right-hand side is upper bounded as follows:
    \begin{align*}
        |A| &\leq \sqrt{K_0} \|(m-\bar \cL_\sigma -m) (m - \bar \cL_\sigma)^{-1}  \Pi f \|_{L^2( \mu )} \|f - \Pi f \|_{L^2(\mu_\sigma)}\\
        &= \sqrt{K_0} \|\Pi f  -m (m - \bar \cL_\sigma)^{-1}  \Pi f \|_{L^2( \mu )} \|f - \Pi f \|_{L^2(\mu_\sigma)}\\
        &\leq \sqrt{K_0} \left(\|\Pi f\|_{L^2(\mu)}  +m \|(m - \bar \cL_\sigma)^{-1}  \Pi f \|_{L^2( \mu )}\right) \|f - \Pi f \|_{L^2(\mu_\sigma)}.
    \end{align*}
    Since  $\mu(\Pi f ) = \mu_\sigma(f ) = 0$,  we obtain $\|(m - \bar \cL_\sigma)^{-1}  \Pi f \|_{L^2( \mu )} \leq (2m)^{-1} \|\Pi f \|_{L^2(\mu)}$ by Lemma~\ref{lem:TLD2}(III). Thus 
    \begin{align}
        |\langle f - \Pi f,  \Ltr^\star\Ltr (m - \bar \cL_\sigma)^{-1} \Pi f \rangle_{L^2(\mu_\sigma )}| \leq \frac{3}{2}\sqrt{K_0} \|\Pi f \|_{L^2(\mu)} \|f - \Pi f \|_{L^2(\mu_\sigma)}. \label{eq:inter41}
    \end{align}

   
     For the second summand in~\eqref{eq:inter5801}, by Lemma~\ref{lem:TLD2}(I) we have  $\Pi f \in C^3_c(\R^d) \subseteq \mathrm{Dom}(\bar \cL_\sigma) \cap C_b^2(\R^d )$. Moreover, due to Lemma~\ref{lem:TLD2}(III), it holds $(m - \bar \cL_\sigma)^{-1} \Pi f \in C^2_b(\R^d) \cap \mathrm{Dom}(\bar \cL_\sigma)$. Due to the fact that $\Pi (m - \bar \cL_\sigma)^{-1} \Pi f = (m - \bar \cL_\sigma)^{-1} \Pi f$, we have  $(m - \bar \cL_\sigma)^{-1} \Pi f \in C^2_\Pi$.  By~\eqref{eq:L2} we get
     \begin{align*}
         \langle \Pi f , \Ltr^\star\Ltr (m - \bar \cL_\sigma)^{-1} \Pi f \rangle_{L^2(\mu_\sigma )} &= 
         \langle \Pi f ,\Pi \Ltr^\star\Ltr \Pi (m - \bar \cL_\sigma)^{-1} \Pi f \rangle_{L^2(\mu)} \\
        &=  \langle \Pi f , (- \cL_\sigma) (m - \bar \cL_\sigma)^{-1} \Pi f \rangle_{L^2(\mu_\sigma )}\\
         &= \langle \Pi f , (- \bar \cL_\sigma) (m - \bar \cL_\sigma)^{-1} \Pi f \rangle_{L^2(\mu_\sigma )},
     \end{align*}
     where the last equality holds by Lemma~\ref{lem:TLD2}(I). This can be further decomposed as
     \begin{align*}
         \langle \Pi f , \Ltr^\star\Ltr (m - \bar \cL_\sigma)^{-1} \Pi f \rangle_{L^2(\mu_\sigma )}
         &=\langle \Pi f , (m- \bar \cL_\sigma-m) (m - \bar \cL_\sigma)^{-1} \Pi f \rangle_{L^2(\mu_\sigma )}\\
         &= \langle \Pi f ,  \Pi f \rangle_{L^2( \mu )} - m \langle (m - \bar \cL_\sigma)^{-1}   \Pi f ,  \Pi f \rangle_{L^2( \mu )}.
     \end{align*}
    Recall that $\mu(\Pi f ) = \mu_\sigma(f ) = 0$.
    Hence, using the Cauchy--Schwarz inequality and  Lemma~\ref{lem:TLD2}(III) (which implies that $\|(m - \bar \cL_\sigma)^{-1}  \Pi f \|_{L^2( \mu )} \leq (2m)^{-1} \|\Pi f \|_{L^2(\mu)}$), we have the lower bound
    \begin{align*}
        \langle \Pi f , \Ltr^\star\Ltr (m - \bar \cL_\sigma)^{-1} \Pi f \rangle_{L^2(\mu_\sigma )}&\geq \|\Pi f \|_{L^2(\mu)}^2 - m \|\Pi f\|_{L^2(\mu)} \|(m - \bar \cL_\sigma)^{-1} \Pi f \|_{L^2(\mu)}\\
        &\geq \|\Pi f \|_{L^2(\mu)}^2 - \frac{1}{2} \|\Pi f\|_{L^2(\mu)}^2 = \frac{1}{2} \|\Pi f \|_{L^2( \mu_\sigma )}^2. \numberthis \label{eq:inter42}
    \end{align*}
    Using \eqref{eq:inter39}, \eqref{eq:inter40}, \eqref{eq:inter5801}, \eqref{eq:inter41} and \eqref{eq:inter42}, we get the result~\eqref{eq:DMSboundB} for $f \in C_c^3(\R^d \times \R^d)$ with $\mu_\sigma(f) = 0$.
    
      Note that the left and right-hand sides of~\eqref{eq:DMSboundB} involve only the bounded operator $\cA$ and the first derivative of $f$. Hence, since $C^3_c(\R^d \times \R^d) \cap L_0^2(\mu_\sigma)$ is  dense in $C^1_c(\R^d \times \R^d) \cap L_0^2(\mu_\sigma)$,  the same argument as in the end of the proof of Lemma~\ref{lem:DMSboundA} extends the inequality in~\eqref{eq:DMSboundB} to all $f \in C^1_c(\R^d \times \R^d) \cap L_0^2(\mu_\sigma)$. It now only remains to prove the Claim.
\smallskip

\noindent \underline{Proof of \textbf{Claim}}.  For $f \in C^3_c(\R^d \times \R^d)$, set  $\varphi\coloneqq (m - \bar \cL_\sigma)^{-1} \Pi f$. Note that $\Pi f\in C_c^3(\R^d)$ and hence, by Lemma~\ref{lem:TLD2}(III), we have $\varphi \in C^{2}_b(\R^d) \cap \mathrm{Dom}(\bar \cL_\sigma)$.
Lemma~\ref{lem:wtr}(II) implies
$\varphi\in W_{\mathrm{tr}}$ and $\Ltr\varphi\in W_{\mathrm{tr}^\star}$.
The Claim follows if we prove
\begin{equation}
\label{eq:intermediate_step}
    \bigl\langle
        \cA\Ltr f,\Pi f
    \bigr\rangle_{L^2(\mu_\sigma)}
    =
    \bigl\langle
        \Ltr f,\Ltr\varphi
    \bigr\rangle_{L^2(\mu_\sigma)}.
\end{equation}
Indeed, since $f\in W_{\mathrm{tr}}$ and
$\Ltr\varphi\in W_{\mathrm{tr}^\star}$, an application
of~\eqref{eq:L3} to the right-hand side of~\eqref{eq:intermediate_step} gives
\begin{align*}
    \bigl\langle
        \cA\Ltr f,\Pi f
    \bigr\rangle_{L^2(\mu_\sigma)}
    =
    \bigl\langle
        f,\Ltr^\star\Ltr\varphi
    \bigr\rangle_{L^2(\mu_\sigma)}
    =
    \bigl\langle
        f,
        \Ltr^\star\Ltr
        (m-\bar\cL_\sigma)^{-1}\Pi f
    \bigr\rangle_{L^2(\mu_\sigma)}.
\end{align*}

We now establish~\eqref{eq:intermediate_step} via an approximation argument. 
Although $\Ltr f$ need not be differentiable,
it belongs to $L^2(\mu_\sigma)$ and has compact support, since
$f$ has compact support, the bounce rate is locally bounded and
the reflection $R_{\nabla_xH(x,v)}$ preserves the magnitude of the velocity $|R_{\nabla_x H(x, v)} v|=|v|$. Pick a sequence
$(h_n)_{n\in\N}\subset C_c^\infty(\R^d\times\R^d)$ such that
$h_n\to \Ltr f$
in $L^2(\mu_\sigma)$.
For any $n\in\N$, define a smooth vector field
\[
    M_n(x)
    \coloneqq
    \int_{\R^d}v\,h_n(x,v)\,\ud\kappa_x( v)=\sigma(x)\int_{\R^d}
    z\,h_n\bigl(x,\sigma(x)z\bigr)\,\ud\gamma(z),
    \quad x\in\R^d,
\]
with compact support
(here $\gamma$ denotes the Gaussian measure on $\R^d$).

For any $g\in C_c^1(\R^d)$, identified with a function on
$\R^d\times\R^d$ that is independent of $v$, by~\eqref{eq:Ltradjoint}  we have 
$ \Ltr g = \langle  v,\nabla_x g\rangle$. Hence
identity~\eqref{eq:L3} and Remark~\ref{rem:ibp}
yield
\begin{align*}
    \bigl\langle \Pi\Ltr^\star h_n,g\bigr\rangle_{L^2(\mu)}
    &=
    \bigl\langle \Ltr^\star h_n,g\bigr\rangle_{L^2(\mu_\sigma)}
    =
    \bigl\langle h_n,\Ltr g\bigr\rangle_{L^2(\mu_\sigma)}
    =
    \int_{\R^d}
    \bigl\langle M_n(x),\nabla_xg(x)\bigr\rangle\,\mu(\ud x)
    \\
    &=
    \bigl\langle \nabla_x^\star M_n,g\bigr\rangle_{L^2(\mu)}.
\end{align*}
Thus
$\Pi\Ltr^\star h_n=\nabla_x^\star M_n$
in $L^2(\mu)$,
so that $\Pi\Ltr^\star h_n$ admits a representative in $C_c^1(\R^d)$.
Since $h_n\in C_c^\infty(\R^d\times\R^d)$, the definition
of $\cA$ in~\eqref{eq:defcA} and Lemma~\ref{lem:TLD2}(III) imply
$\cA h_n \in C_b^2(\R^d)\cap\mathrm{Dom}(\bar\cL_\sigma)$.
Moreover, by Lemma~\ref{lem:TLD2}(I), we have 
$\Pi f = (m-\bar\cL_\sigma)\varphi=(m-\cL_\sigma)\varphi$,
where the second equality follows from Lemma~\ref{lem:TLD2}(I) since $\varphi \in C^{2}_b(\R^d) \cap \mathrm{Dom}(\bar \cL_\sigma)$.
As both $\cA h_n$ and $\varphi$ are in
$C_b^2(\R^d)\cap\mathrm{Dom}(\bar\cL_\sigma)$, the symmetry identity
in Lemma~\ref{lem:TLD2}(II) yields
\begin{align*}
    \bigl\langle \cA h_n,\Pi f\bigr\rangle_{L^2(\mu_\sigma)}
    &=
    \bigl\langle
        \cA h_n,(m-\cL_\sigma)\varphi
    \bigr\rangle_{L^2(\mu)}
    =
    \bigl\langle
        (m-\cL_\sigma)\cA h_n,\varphi
    \bigr\rangle_{L^2(\mu)}
    =
    \bigl\langle
        \Pi\Ltr^\star h_n,\varphi
    \bigr\rangle_{L^2(\mu)}
    \\
    &=
    \bigl\langle
        \Ltr^\star h_n,\varphi
    \bigr\rangle_{L^2(\mu_\sigma)}
    =
    \bigl\langle
        h_n,\Ltr\varphi
    \bigr\rangle_{L^2(\mu_\sigma)},
\end{align*}
where the final equality follows from~\eqref{eq:L3}.

By Lemma~\ref{lem:Afbound}, $\cA$ extends to a bounded operator on
$L^2(\mu_\sigma)$. Thus, letting $n\to\infty$ and using
$h_n\to\Ltr f$ in $L^2(\mu_\sigma)$, we obtain~\eqref{eq:intermediate_step}.
\end{proof}

\begin{proof}[Proof of Theorem~\ref{thm:coercivity}]
      We first note that 
    \begin{equation*}
        \langle f, (- \cL) f \rangle_{L^2(\mu_\sigma )}  = \langle f, (- \Ltr ) f \rangle_{L^2(\mu_\sigma)} + \lambda_\rr \|f - \Pi f \|_{L^2(\mu_\sigma)}^2, \quad \text{for  $f \in C_c^1(\R^d \times \R^d)$,}
    \end{equation*}
    by using the definition~\eqref{eq:Lv} of $\Lv$. 
    Note that in the above, the term $$\langle f, (- \Ltr ) f \rangle_{L^2(\mu_\sigma)} =\frac{1}{2} \langle f, -(\Ltr + \Ltr^\star )f\rangle_{L^2(\mu_\sigma)},$$ using~\eqref{eq:L3} (which is applicable here due to Lemma~\ref{lem:wtr}(I)). Also, using the expressions for $\Ltr$ and $\Ltr^\star$ as given in~\eqref{eq:Ltradjoint}, we have that $-(\Ltr + \Ltr^\star ) f = |\langle \nabla_x H (x, v ), v \rangle| (f(x, v) - f(x, R_{\nabla_x H(x, v)} v ))$. Hence, by an application of the Cauchy--Schwarz inequality, we have that
    \begin{align*}
       &2 \langle f, (- \Ltr ) f \rangle_{L^2(\mu_\sigma)} \\&= \int_{\R^d \times \R^d} |\langle\nabla_x H (x, v), v\rangle| f^2(x, v )\,\ud \mu_\sigma(x, v) - \int_{\R^d \times \R^d} |\langle\nabla_x H (x, v), v\rangle| f(x, v ) f(x, R_{\nabla_x H (x, v)} v )\,\ud \mu_\sigma(x, v)\\
        &\geq \int_{\R^d \times \R^d} |\langle\nabla_x H (x, v), v\rangle| f^2(x, v )\,\ud \mu_\sigma(x, v)\\ &- \left(\int_{\R^d \times \R^d} |\langle\nabla_x H (x, v), v\rangle| f^2(x, v )\,\ud \mu_\sigma(x, v)\right)^{1/2}\left(\int_{\R^d \times \R^d} |\langle\nabla_x H (x, v), v\rangle| f^2(x, R_{\nabla_x H (x, v)} v ) \,\ud \mu_\sigma(x, v)\right)^{1/2}.
    \end{align*}
    By a change of variable argument of the form found in the proof of Lemma~\ref{lem:lifts}\eqref{eq:L3}, we have that $$\int_{\R^d \times \R^d} |\langle\nabla_x H (x, v), v\rangle| f^2(x, R_{\nabla_x H (x, v)} v )\,\ud \mu_\sigma(x, v) =  \int_{\R^d \times \R^d} |\langle\nabla_x H (x, v), v\rangle| f^2(x, v ) \,\ud \mu_\sigma(x, v).$$ Hence, we have that $\langle f, (- \Ltr ) f \rangle_{L^2(\mu_\sigma)} \geq 0$.
    Therefore,
     \begin{equation}
        \langle f, (- \cL) f \rangle_{L^2(\mu_\sigma )}  \geq \lambda_\rr \|f - \Pi f \|_{L^2(\mu_\sigma)}^2, \quad \text{for  $f \in C_c^1(\R^d \times \R^d)$.}\label{eq:poincarev}
    \end{equation}
   
    Following from Lemmas~\ref{lem:DMSboundA},~\ref{lem:DMSboundB} and equations \eqref{eq:defcE} and \eqref{eq:poincarev}, we have
    \begin{align*}
        \cE_{\delta_\ast}(f) \geq (\lambda_\rr - \delta_\ast)\|f - \Pi f \|_{L^2(\mu_\sigma)}^2  - \delta_\ast \left(2\sqrt{K_0} + \frac{\lambda_\rr }{2\sqrt m}\right) \|\Pi f \|_{L^2(\mu_\sigma)} \|f - \Pi f \|_{L^2(\mu_\sigma)} + \frac{\delta_\ast}{2} \|\Pi f \|_{L^2(\mu_\sigma )}^2.
    \end{align*}
    We have that
    $$\left(2\sqrt{ K_0} + \frac{\lambda_\rr }{2\sqrt m}\right) \|\Pi f \|_{L^2(\mu_\sigma)} \|f - \Pi f \|_{L^2(\mu_\sigma)} \leq \frac{1}{4} \|\Pi f \|_{L^2( \mu_\sigma)}^2 + \left(2\sqrt{K_0} + \frac{\lambda_\rr }{2\sqrt m}\right)^2 \|f - \Pi f \|_{L^2(\mu_\sigma)}^2.$$
    Therefore, 
    \begin{align*}
        \cE_{\delta_\ast}(f) \geq \left(\lambda_\rr - \delta_\ast - {\delta_\ast}\left(2\sqrt{K_0} + \frac{\lambda_\rr }{2\sqrt m}\right)^2\right)\|f - \Pi f \|_{L^2(\mu_\sigma)}^2  + \frac{\delta_\ast}{4} \|\Pi f \|_{L^2(\mu_\sigma )}^2.
    \end{align*}
    The coefficient of the first term in the above is greater than $\delta_\ast/4$ as
    $ \delta_\ast \leq \frac{\lambda_\rr}{{5}/{4} + \left(2\sqrt{K_0} + {\lambda_\rr }/{2\sqrt m}\right)^2}$.
    Hence, we get \eqref{eq:coercivity}.
    The second part of the theorem follows from using the Cauchy--Schwarz inequality and Lemma~\ref{lem:Afbound}:
    \begin{align*}
        |\cV_{\delta_\ast} (f) - \frac{1}{2} \|f\|^2_{L^2(\mu_\sigma)}| &= \delta_\ast |\langle \cA f, f\rangle_{L^2(\mu_\sigma)}| \leq \frac{\delta_\ast}{2\sqrt m }  \|f\|^2_{L^2( \mu_\sigma)}.
    \end{align*}
    The constant $\delta_\ast/(2\sqrt{m})$ is bounded by 1/4.
\end{proof}

\subsection{Proof of Theorem~\ref{thm:maincnvg}}\label{sec:proofdecay}
Recall the Markov semigroup $\cP = (\cP_t)_{t \in \R_+}$ and the generator $(\bar \cL, \mathrm{Dom}(\bar \cL))$ as defined in Section~\ref{sec:invariance}, corresponding to the piecewise deterministic Markov process given by~\eqref{eq:TBP}.
For the proof of Theorem~\ref{thm:maincnvg}, we need the following definition. For $\delta \in (0, \infty)$, define the functional
\begin{equation}\begin{aligned}
    \bar \cE_\delta(f) &\coloneqq \langle f, (- \bar \cL ) f \rangle_{L^2(\mu_\sigma )} + \delta \langle \cA \, \bar \cL  f, f \rangle_{L^2(\mu_\sigma )} + \delta \langle\cA f, \bar\cL f \rangle_{L^2( \mu_\sigma)}, \qquad  \text{for } f \in \mathrm{Dom}(\bar \cL).
\end{aligned}\label{eq:defbarcE}\end{equation}
Throughout the next proof, we use $L_0^2(\mu_\sigma) \coloneqq \{f \in L^2(\mu_\sigma) : \mu_\sigma(f) = 0\}$.

\begin{proof}[Proof of Theorem~\ref{thm:maincnvg}]
  Assume first that $f \in \mathrm{Dom}(\bar \cL) $ with $ \mu_\sigma (f) = 0 $. Hence, $\cP_tf \in \mathrm{Dom}(\bar \cL)$ (using \cite[Ch.1 Prop.1.5]{ethier_markov_1986}) with $\mu_\sigma(\cP_t f) = 0$ for  $t \geq 0$.
  Let $\delta_\ast$ be as in Theorem~\ref{thm:coercivity}.
  We have that $\cA : L^2(\mu_\sigma) \rightarrow L^2(\mu_\sigma)$ is bounded (due to Lemma~\ref{lem:Afbound}) and hence the map $(g, h) \mapsto \langle g, \cA h \rangle_{L^2(\mu_\sigma)}$ is continuous for $(g, h) \in L^2(\mu_\sigma) \times L^2(\mu_\sigma)$. 
  Thus, using $\frac{\ud}{\ud t} \cP_t f = \bar \cL \cP_t f $, we have that 
\begin{equation}
\frac{\ud}{\ud t}\cV_{\delta_\ast}(\cP_t f) = -\bar \cE_{\delta_\ast}(\cP_tf).\label{eq:inter28}
\end{equation}
From Theorem~\ref{thm:stationaritycore}(II), $C^1_c(\R^d \times \R^d)$ is a core of $\bar \cL$. Fix $t \in \R_+$. Hence, there exists a sequence $(f_i)_{i \in \N} \subset C^1_c(\R^d \times \R^d)$ such that $\|f_i - \cP_t f\|_{L^2(\mu_\sigma)} + \| \bar \cL( f_i - \cP_t f )\|_{L^2(\mu_\sigma)} \rightarrow 0 $. In the next few steps, we construct a compactly supported approximating sequence that is also $\mu_\sigma$-mean zero.

Fix a function $\psi \in C_c^1(\R^d \times \R^d)$ such that $\mu_\sigma(\psi) = 1$. Then $g_i \coloneqq f_i - \mu_\sigma(f_i)\psi \in C_c^1(\R^d \times \R^d) \cap L_0^2(\mu_\sigma)$.
It remains to show that the sequence $(g_i)_{i \in \N}$ also approximates $\cP_t f$. We have that
\begin{align*}
    \| g_i - f_i\|_{L^2(\mu_\sigma)} + \| \bar \cL (g_i - f_i )\|_{L^2(\mu_\sigma)} = |\mu_\sigma(f_i)| \left(\|\psi\|_{L^2(\mu_\sigma)} + \|\bar \cL \psi\|_{{L^2(\mu_\sigma)}}\right).
\end{align*}
In the above, the right-hand side is finite as $\psi \in \mathrm{Dom}(\bar \cL)$. Hence, $\|f_i - \cP_t f\|_{L^2(\mu_\sigma)} + \| \bar \cL( f_i - \cP_t f )\|_{L^2(\mu_\sigma)} \rightarrow 0 $ implies that $\mu_\sigma(f_i) \rightarrow 0$ as $\mu_\sigma(\cP_t f) = 0$. Thus,
\begin{align*}
    \| g_i - \cP_t f\|_{L^2(\mu_\sigma)} + \| \bar \cL (g_i - \cP_t f )\|_{L^2(\mu_\sigma)}& \leq |\mu_\sigma(f_i)| \left(\|\psi\|_{L^2(\mu_\sigma)} + \|\bar \cL \psi\|_{{L^2(\mu_\sigma)}}\right)\\
    & + \|f_i - \cP_t f\|_{L^2(\mu_\sigma)} + \| \bar \cL( f_i - \cP_t f )\|_{L^2(\mu_\sigma)} \rightarrow 0.
\end{align*}

As stated earlier, the map $(g, h) \mapsto \langle g, \cA h \rangle_{L^2(\mu_\sigma)} $ for $(g, h) \in L^2(\mu_\sigma) \times L^2(\mu_\sigma)$ is continuous and hence we have that
\begin{equation}
    |\bar\cE_{\delta_\ast} (g_i) - \bar \cE_{\delta_\ast}(\cP_t f)| + |\cV_{\delta_\ast} (g_i) - \cV_{\delta_\ast} (\cP_t f)| \rightarrow 0 \quad \text{as } i \rightarrow \infty. \label{eq:inter29}
\end{equation}
      Thus, for every $\epsilon > 0$, there exists $n \in \N$ such that for all $i \in \N$, with $i > n$,
      \begin{align*}
          \frac{\ud}{\ud t}\cV_{\delta_\ast}(\cP_t f) &= -\bar \cE_{\delta_\ast}(\cP_tf) \leq- \bar \cE_{\delta_\ast}(g_i) + \epsilon = -  \cE_{\delta_\ast}(g_i) + \epsilon\leq -\frac{\delta_\ast}{3}\cV_{\delta_\ast}(g_i) + \epsilon\leq -\frac{\delta_\ast}{3}\cV_{\delta_\ast}(\cP_t f) + 2 \epsilon.
      \end{align*}
      In the above, the first step is merely~\eqref{eq:inter28}. The second step follows from~\eqref{eq:inter29}. The third step follows from Theorem~\ref{thm:stationaritycore}(I). The fourth step follows from Theorem~\ref{thm:coercivity}, and the fifth step follows from~\eqref{eq:inter29}. Since $\epsilon > 0$ is arbitrary, we have the inequality $\frac{\ud}{\ud t}\cV_{\delta_\ast}(\cP_t f) \leq  -\frac{\delta_\ast}{3}\cV_{\delta_\ast}(\cP_t f)$.

      By an application of Gronwall's inequality, we have that
      $$\cV_{\delta_\ast}(\cP_tf) \leq \exp\left(-\frac{\delta_\ast t}{3}  \right) \cV_{\delta_\ast}(f),$$
      and~\eqref{eq:Vbounds} gives us the $L^2$-decay result~\eqref{eq:L2decayresult} for $f \in \mathrm{Dom}(\bar \cL) \cap L_0^2(\mu_\sigma) $.

      We have that $C_c^1(\R^d \times \R^d)$ is dense in $L^2(\mu_\sigma)$.
      Using the same construction as earlier, we can see that $ C_c^1(\R^d \times \R^d ) \cap L_0^2(\mu_\sigma)$ is dense in $L_0^2(\mu_\sigma)$. Hence, $\mathrm{Dom}(\bar \cL) \cap L_0^2(\mu_\sigma)$ is dense in $L_0^2( \mu_\sigma)$. Due to Lemma~\ref{lem:Pcontraction}, the semigroup $(\cP_t)_{t \in \R_+}$ extends uniquely to a contraction on $L^2(\mu_\sigma)$ . Thus, we get~\eqref{eq:L2decayresult} for $f \in L_0^2(\mu_\sigma)$.
\end{proof}

\section{Open problems and future directions}\label{sec:conclusions}
The open problems discussed in this section fall into two broad directions: (A) applying~\eqref{eq:TBP} without prior knowledge of the tail behaviour of $\mu$, and (B) extending the weighted Poincar\'e inequality methods developed here to related models.

\paragraph{\textbf{(A) Adaptive tempering.}}
How should the tempering function $\sigma$ be chosen when the tail behaviour of the target is unknown or anisotropic? This question is important because, although the process~\eqref{eq:TBP} is well defined and non-explosive for every choice of $\sigma$, our convergence theory requires the weighted Poincar\'e, regularity, and curvature conditions in Assumptions~\ref{ass:wpi}--\ref{ass:curvature}. The weighted Poincar\'e condition is monotone in the weight: if it holds for some $\sigma_1:\R^d\to[1,\infty)$, then it also holds, with the same constant, for every $\sigma_2\geq\sigma_1$. In contrast, the regularity and curvature conditions need not be preserved under such an increase; in particular, the curvature condition may hold for $\sigma_1$ but fail for $\sigma_2\geq\sigma_1$, a phenomenon we call \textit{over-tempering}. It therefore remains open to construct a tempering function that requires no prior knowledge of whether the tails are polynomial or sub-exponential, adapts to different tail behaviours in different directions, and still satisfies Assumptions~\ref{ass:wpi}--\ref{ass:curvature}. Possible approaches include defining $\sigma$ from local information about $U$ or learning it using neural networks or other modern adaptive methods. The latter is similar in spirit to the tail-index estimation strategies of~\cite{HillTailestimation,pmlrTailAdaptiveLaszkiewicz22a}. More broadly, this question is in the spirit of the open problems discussed by Power and Vasdekis in~\cite[Sec.~4.3]{powervaskedis2025robustness}.

\smallskip

\paragraph{\textbf{(B) Tempered underdamped Langevin dynamics and other non-reversible lifts of~\eqref{eq:templangevin}.}}
Can the weighted-Poincar\'e approach developed in this paper be extended to other non-reversible lifts (in the sense of~\cite[Def.~1]{eberle_non-reversible_2026}) of the tempered Langevin diffusion~\eqref{eq:templangevin}?  As shown in~\cite{eberle_convergence_2025}, underdamped Langevin dynamics (ULD), the Bouncy Particle Sampler (BPS), and the Zig-Zag sampler (ZZS) are among the lifts of the untempered overdamped Langevin diffusion. Their tempered counterparts can be constructed analogously: just as~\eqref{eq:TBP} is a tempered version of BPS, one can construct tempered versions of ULD and ZZS that lift~\eqref{eq:templangevin}.

For example, consider the $\R^d\times\R^d$-valued process $(Q_t,P_t)_{t\in\R_+}$ satisfying
\begin{equation}
    \begin{aligned}
        \ud Q_t &= P_t\,\ud t,\\
        \ud P_t
        &=
        \left[
            -\sigma^2(Q_t)\nabla_x U(Q_t)
            -
            \left(
                d-2-\frac{|P_t|^2}{\sigma^2(Q_t)}
            \right)
            \sigma(Q_t)\nabla_x\sigma(Q_t)
        \right]\ud t\\
        &\quad
        -\frac{\lambda}{\sigma^2(Q_t)}P_t\,\ud t
        +\sqrt{2\lambda}\,\ud W_t,
    \end{aligned}
    \label{eq:udlangevintemp}\tag{T-ULD}
\end{equation}
where $\lambda>0$ is the friction parameter and $(W_t)_{t \in \R_+}$ is a standard $d$-dimensional Brownian motion. The process~\eqref{eq:udlangevintemp} admits $\mu_\sigma$, defined in~\eqref{eq:mu_sigma_def}, as its stationary distribution and is a second-order lift of~\eqref{eq:templangevin}; it may therefore be regarded as a tempered analogue of ULD. Under Assumptions~\ref{ass:wpi}--\ref{ass:curvature}, suitable adaptations of the arguments developed in this paper should yield non-asymptotic exponential convergence rates for~\eqref{eq:udlangevintemp}. 
The process in~\eqref{eq:udlangevintemp} is particularly interesting
because it is driven by Brownian motion, involves only additive noise,
and, under the natural stationary scaling
$|P|=O(\sigma(Q))$, has drift of at most linear order in the tails for
polynomially heavy-tailed targets, while nevertheless being expected to
be exponentially ergodic.

More generally, we expect that minor modifications of our methods will also provide non-asymptotic exponential convergence rates for the Speed-Up Zig-Zag (SUZZ) process~\cite{g_vasdekis_speed_2023} when targeting heavy-tailed distributions.

\bibliographystyle{amsplain}
\bibliography{references}

\providecommand{\bysame}{\leavevmode\hbox to3em{\hrulefill}\thinspace}
\providecommand{\MR}{\relax\ifhmode\unskip\space\fi MR }
\providecommand{\MRhref}[2]{%
  \href{http://www.ams.org/mathscinet-getitem?mr=#1}{#2}
}
\providecommand{\href}[2]{#2}
\begin{thebibliography}{10}

\bibitem{christophe_andrieu_subgeometric_2021}
Christophe Andrieu, Paul Dobson, and Andi~Q. Wang, \emph{Subgeometric hypocoercivity for piecewise-deterministic {Markov} process {Monte} {Carlo} methods}, Electronic Journal of Probability \textbf{26} (2021), 1--26.

\bibitem{andrieu_hypocoercivity_2021}
Christophe Andrieu, Alain Durmus, Nikolas N\"{u}sken, and Julien Roussel, \emph{Hypocoercivity of piecewise deterministic {M}arkov process-{M}onte {C}arlo}, Ann. Appl. Probab. \textbf{31} (2021), no.~5, 2478--2517. \MR{4332703}

\bibitem{andrieu_weak_2026}
Christophe Andrieu, Anthony Lee, Sam Power, and Andi~Q. Wang, \emph{Weak {Poincaré} inequalities for {Markov} chains: {Theory} and applications}, The Annals of Applied Probability \textbf{36} (2026), no.~1, 46--107 (en), Publisher: Institute of Mathematical Statistics.

\bibitem{bakry_analysis_2014}
D.~Bakry, Ivan Gentil, and Michel Ledoux, \emph{Analysis and {Geometry} of {Markov} {Diffusion} {Operators}}, vol. 348., Springer, Cham, 2014 (English).

\bibitem{bobkov_weighted_2009}
Sergey~G. Bobkov and Michel Ledoux, \emph{Weighted {Poincaré}-type inequalities for {Cauchy} and other convex measures}, The Annals of Probability \textbf{37} (2009), no.~2, 403--427 (en), Publisher: Institute of Mathematical Statistics.

\bibitem{bouchard-cote_bouncy_2018}
Alexandre Bouchard-Côté, Sebastian~J. Vollmer, and Arnaud Doucet, \emph{The {Bouncy} {Particle} {Sampler}: {A} {Nonreversible} {Rejection}-{Free} {Markov} {Chain} {Monte} {Carlo} {Method}}, Journal of the American Statistical Association \textbf{113} (2018), no.~522, 855--867, Publisher: Taylor \& Francis.

\bibitem{bresar2026diffeomorphicmarkovchainmonte}
Miha Brešar and Aleksandar Mijatović, \emph{Diffeomorphic markov chain monte carlo: fast mixing for heavy-tailed distributions}, 2026, arXiv:2608.04284 [stat.CO].

\bibitem{cao_explicit_2023}
Yu~Cao, Jianfeng Lu, and Lihan Wang, \emph{On {Explicit} ${L}^2$-{Convergence} {Rate} {Estimate} for {Underdamped} {Langevin} {Dynamics}}, Archive for Rational Mechanics and Analysis \textbf{247} (2023), no.~5, 90.

\bibitem{cattiaux_functional_2010}
Patrick Cattiaux, Nathael Gozlan, Arnaud Guillin, and Cyril Roberto, \emph{Functional {Inequalities} for {Heavy} {Tailed} {Distributions} and {Application} to {Isoperimetry}}, Electronic Journal of Probability \textbf{15} (2010), 346--385.

\bibitem{cerrai_second_2001}
Sandra Cerrai, \emph{Second {Order} {PDE}’s in {Finite} and {Infinite} {Dimension}: {A} {Probabilistic} {Approach}}, Springer Berlin Heidelberg, Berlin, Heidelberg, 2001.

\bibitem{davis_markov_2018}
M.~H.~A. Davis, \emph{Markov {Models} \& {Optimization}}, Routledge, New York, February 2018.

\bibitem{deligiannidis_exponential_2019}
George Deligiannidis, Alexandre Bouchard-Côté, and Arnaud Doucet, \emph{Exponential {Ergodicity} of {The} {Bouncy} {Particle} {Sampler}}, The Annals of Statistics \textbf{47} (2019), no.~3, 1268--1287, Publisher: Institute of Mathematical Statistics.

\bibitem{dolbeault_hypocoercivity_2010}
Jean Dolbeault, Clément Mouhot, and Christian Schmeiser, \emph{Hypocoercivity for linear kinetic equations conserving mass}, Transactions of the American Mathematical Society \textbf{367} (2010), 3807--3828.

\bibitem{durmus_piecewise_2021}
Alain Durmus, Arnaud Guillin, and Pierre Monmarché, \emph{Piecewise deterministic {Markov} processes and their invariant measures}, Annales de l'Institut Henri Poincaré, Probabilités et Statistiques \textbf{57} (2021), no.~3, 1442--1475 (en), Publisher: Institut Henri Poincaré.

\bibitem{dwivediLogConcavesampling}
Raaz Dwivedi, Yuansi Chen, Martin~J Wainwright, and Bin Yu, \emph{{L}og-concave sampling: {M}etropolis-{H}astings algorithms are fast!}, Proceedings of the 31st Conference On Learning Theory (Sébastien Bubeck, Vianney Perchet, and Philippe Rigollet, eds.), Proceedings of Machine Learning Research, vol.~75, PMLR, 06--09 Jul 2018, pp.~793--797.

\bibitem{eberle_convergence_2025}
Andreas Eberle, Arnaud Guillin, Leo Hahn, Francis Lörler, and Manon Michel, \emph{Convergence of non-reversible {Markov} processes via lifting and flow {Poincaré} inequality}, July 2025, arXiv:2503.04238 [math.AP].

\bibitem{eberle_non-reversible_2026}
Andreas Eberle and Francis Lörler, \emph{Non-reversible lifts of reversible diffusion processes and relaxation times}, Probability Theory and Related Fields \textbf{194} (2026), no.~1, 173--203.

\bibitem{engel_one-parameter_2000}
Klaus-Jochen Engel and Rainer Nagel (eds.), \emph{One-{Parameter} {Semigroups} for {Linear} {Evolution} {Equations}}, Springer New York, New York, NY, 2000.

\bibitem{ethier_markov_1986}
S.N. Ethier and T.G. Kurtz, \emph{Markov {Processes}}, Wiley {Series} in {Probability} and {Statistics}, Wiley, March 1986.

\bibitem{fan_sharp_2026}
Zexi Fan, Bowen Li, and Jianfeng Lu, \emph{Sharp hypocoercive convergence estimates for underdamped {Langevin} dynamics via the modified ${L}^2$ method}, April 2026, arXiv:2604.10068 [math.AP].

\bibitem{folland_real_1999}
Gerald~B. Folland, \emph{Real {Analysis}: {Modern} {Techniques} and {Their} {Applications}}, John Wiley \& Sons, Incorporated, New York, United States, 1999.

\bibitem{g_vasdekis_speed_2023}
{G. Vasdekis} and {G. O. Roberts}, \emph{Speed up {Zig}-{Zag}}, The Annals of Applied Probability \textbf{33} (2023), no.~6A, 4693--4746.

\bibitem{he2022transformedULA}
Ye~He, Krishnakumar Balasubramanian, and Murat~A. Erdogdu, \emph{An analysis of transformed unadjusted langevin algorithm for heavy-tailed sampling}, IEEE Transactions on Information Theory \textbf{70} (2024), no.~1, 571--593.

\bibitem{HillTailestimation}
Bruce~M. Hill, \emph{A simple general approach to inference about the tail of a distribution}, The Annals of Statistics \textbf{3} (1975), no.~5, 1163--1174.

\bibitem{hormander_analysis_2003}
Lars Hörmander, \emph{The {Analysis} of {Linear} {Partial} {Differential} {Operators} {I}: {Distribution} {Theory} and {Fourier} {Analysis}}, Springer Berlin Heidelberg, Berlin, Heidelberg, 2003.

\bibitem{pmlrTailAdaptiveLaszkiewicz22a}
Mike Laszkiewicz, Johannes Lederer, and Asja Fischer, \emph{Marginal tail-adaptive normalizing flows}, Proceedings of the 39th International Conference on Machine Learning (Kamalika Chaudhuri, Stefanie Jegelka, Le~Song, Csaba Szepesvari, Gang Niu, and Sivan Sabato, eds.), Proceedings of Machine Learning Research, vol. 162, PMLR, 17--23 Jul 2022, pp.~12020--12048.

\bibitem{leif_t_johnson_variable_2012}
{Leif T. Johnson} and {Charles J. Geyer}, \emph{Variable transformation to obtain geometric ergodicity in the random-walk {Metropolis} algorithm}, The Annals of Statistics \textbf{40} (2012), no.~6, 3050--3076.

\bibitem{livingstone_geometric_2021}
Samuel Livingstone, \emph{Geometric {Ergodicity} of the {Random} {Walk} {Metropolis} with {Position}-{Dependent} {Proposal} {Covariance}}, Mathematics \textbf{9} (2021), no.~4, 341.

\bibitem{piecewise_hypocoercivity_wang_2022}
Jianfeng Lu and Lihan Wang, \emph{{On} {Explicit} ${L}^2$-{Convergence} rate estimate for {Piecewise} {Deterministic} {Markov} {Processes} in {MCMC} {Algorithms}}, The Annals of Applied Probability \textbf{32} (2022), no.~2, 1333--1361.

\bibitem{pavliotis_stochastic_2014}
Grigorios~A. Pavliotis, \emph{Stochastic {Processes} and {Applications}: {Diffusion} {Processes}, the {Fokker}-{Planck} and {Langevin} {Equations}}, Springer New York, New York, NY, 2014.

\bibitem{powervaskedis2025robustness}
Sam Power and Giorgos Vasdekis, \emph{Some aspects of robustness in modern markov chain monte carlo}, 2025, arXiv:2511.21563 [stat.CO].

\bibitem{protter_stochastic_2004}
Philip~E. Protter, \emph{Stochastic integration and differential equations}, second ed., vol. 21., Springer, New York;Berlin;, 2004 (English).

\bibitem{roberts_polynomial_2023}
Gareth~O. Roberts and Jeffrey~S. Rosenthal, \emph{Polynomial convergence rates of piecewise deterministic {M}arkov processes}, Methodology and Computing in Applied Probability \textbf{25} (2023), no.~1, 1--18.

\end{thebibliography}

\appendix
\section{Auxiliary Lemmas}\label{sec:appendixaux}
Throughout this section, we use the smooth cutoff functions $\phi$ and $\phi_n$ (for $n \in \N$) as defined in Section~\ref{sec:proofTLD2}.
For the proof of Lemma~\ref{lem:TLD2}(III), we need the following finiteness lemma, which follows from the monotone convergence theorem.

\begin{lemma}\label{lem:dirichletfinite} Let~\ref{ass:sigmabounds} hold. For $g \in C^2_b(\R^d )$ satisfying $\|\cL_\sigma g \|_{L^2(\mu)} < \infty$, we have  $$\int_{\R^d} \sigma^2 |\nabla_x g|^2 \, \ud \mu < \infty  .$$
\end{lemma}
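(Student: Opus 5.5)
The plan is to test $-\cL_\sigma g$ against the localized function $\phi_n^2 g$, where $\phi_n$ is the cutoff from Section~\ref{sec:proofTLD2}. Integrating by parts then produces the truncated Dirichlet energy plus a cross term. The cross term can be absorbed using the uniform bound~\eqref{eq:inter4202}, and the monotone convergence theorem finishes the argument.

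Fix $g\in C_b^2(\R^d)$ with $\|\cL_\sigma g\|_{L^2(\mu)}<\infty$. Since $\phi_n^2 g\in C_c^2(\R^d)$, Remark~\ref{rem:ibp} applied to the vector field $F=\sigma^2\nabla_x g$ gives
\begin{equation*}
\langle \phi_n^2 g,(-\cL_\sigma)g\rangle_{L^2(\mu)}=\int_{\R^d}\phi_n^2\sigma^2|\nabla_x g|^2\,\ud\mu+2\int_{\R^d}\phi_n g\,\sigma^2\langle\nabla_x\phi_n,\nabla_x g\rangle\,\ud\mu .
\end{equation*}
Write $D_n\coloneqq\int\phi_n^2\sigma^2|\nabla_x g|^2\,\ud\mu$. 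This quantity is finite because the integrand is continuous and compactly supported.

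The left-hand side is bounded by $\|g\|_\infty\|\cL_\sigma g\|_{L^2(\mu)}$, using $\phi_n^2\le1$ and Cauchy--Schwarz together with the fact that $\mu$ is a probability measure. For the cross term, \eqref{eq:inter4202} gives $\sigma|\nabla_x\phi_n|\le M_{\phi,1}$ uniformly in $n$, so
\begin{equation*}
\Bigl|2\int\phi_n g\,\sigma^2\langle\nabla_x\phi_n,\nabla_x g\rangle\,\ud\mu\Bigr|\le 2M_{\phi,1}\|g\|_\infty\int\phi_n\sigma|\nabla_x g|\,\ud\mu\le 2M_{\phi,1}\|g\|_\infty D_n^{1/2}.
\end{equation*}
Combining the two bounds yields $D_n\le A+BD_n^{1/2}$ with constants $A,B$ independent of $n$. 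Since $D_n$ is finite, this quadratic inequality forces $D_n\le (B+\sqrt{B^2+4A})^2/4$ for every $n$.

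Since $\phi$ is non-increasing, $\phi_n(x)=\phi(|x|/n)$ is non-decreasing in $n$ and converges to $1$ pointwise. The monotone convergence theorem then gives $\int\sigma^2|\nabla_x g|^2\,\ud\mu=\lim_n D_n<\infty$.

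The only delicate point is that the cross term must be controlled uniformly in $n$ even though $\sigma$ is unbounded. This works precisely because $|\nabla_x\phi_n|\lesssim 1/n$ on the support $\{|x|\le2n\}$, while $\sigma\le\sigma(0)+2nM_D$ there by the Lipschitz bound in~\ref{ass:sigmabounds}; the growth of $\sigma$ and the decay of $\nabla_x\phi_n$ cancel. Boundedness of $g$ is also used essentially, both to bound the left-hand side and to bound the cross term.
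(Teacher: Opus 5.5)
Your proof is correct and follows the paper's argument. It uses the same integration by parts against $\phi_n^2 g$, the same $n$-uniform bound $\sigma|\nabla_x\phi_n|\le M_{\phi,1}$ from~\eqref{eq:inter4202} for the cross term, and monotone convergence. The only cosmetic difference is how the estimate is closed: you use Cauchy--Schwarz and the quadratic inequality $D_n\le A+BD_n^{1/2}$, whereas the paper uses Young's inequality to absorb $\tfrac12 E_n$ into the left-hand side.
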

\begin{proof}
    Set $E_n \coloneqq \int_{\R^d} \phi_n^2 \sigma^2 |\nabla_x g|^2 \, \ud \mu  = \langle\phi_n^2 \sigma^2 \nabla_x g, \nabla_x g \rangle_{L^2(\mu)}$. Recall $\nabla_x^\star $ from~\ref{sec:defgen}. By an integration-by-parts argument, we have
    \begin{align*}
        E_n = \langle \nabla^\star (\phi_n^2 \sigma^2 \nabla_x g),  g \rangle_{L^2(\mu)} =\int_{\R^d} g\nabla_x^\star(\phi_n^2 \sigma^2 \nabla_x g)  \, \ud \mu .
    \end{align*}
    Boundary terms are zero in the above as $\phi_n$ has compact support. Thus, using the definition of $\nabla_x^\star F = -\mathrm{div}_x F + \langle \nabla_x U, F\rangle$ for a differentiable vector field $F : \R^d \rightarrow \R^d$, we have 
    \begin{align*}
        E_n &=\int_{\R^d}\phi_n^2 g(\nabla_x^\star\sigma^2 \nabla_x g ) \, \ud \mu - 2\int_{\R^d} g\, \phi_n\sigma^2 \langle \nabla_x \phi_n, \nabla_x g \rangle\, \ud \mu\\
        &\leq \|g\|_{L^2(\mu)} \|\cL_\sigma g \|_{L^2(\mu)} + 2\int_{\R^d} g^2 \sigma^2 |\nabla_x \phi_n|^2\, \ud \mu + \frac{1}{2}\int_{\R^d} \phi_n^2 \sigma^2 |\nabla_x g|^2\, \ud \mu , \numberthis \label{eq:inter4201}
    \end{align*}
    using the Cauchy--Schwarz inequality for the first term and Young's inequality ($2ab \leq a^2/2 + 2b^2$ for $a, b \in \R$) for the second term. The second term in~\eqref{eq:inter4201} can be bounded as $|g \sigma \nabla_x \phi_n| \leq \|g\|_\infty M_{\phi, 1}$, using~\eqref{eq:inter4202}.
     Thus, $\int_{\R^d} g^2 \sigma^2 |\nabla_x \phi_n|^2\, \ud \mu < \infty$. By assumption, $\cL_\sigma g \in L^2(\mu)$. Continuing from~\eqref{eq:inter4201}, we have
    $$E_n \leq 2\|g\|_{L^2(\mu)} \| \cL_\sigma g \|_{L^2(\mu)} + 4\| g\|_\infty^2 M_{\phi, 1}^2 =:M_g < \infty \quad \text{for all } n \in \N\setminus \{0\}.$$
    Note that $\phi_n^2 \sigma^2 |\nabla_x g |^2 (x) \uparrow \sigma^2 |\nabla_x g |^2 (x) $ monotonically, for all $x \in \R^d$. Thus, using the monotone convergence theorem, $$\int_{\R^d } \sigma^2 |\nabla_x g|^2 \, \ud \mu = \int_{\R^d}\lim_{ n \rightarrow \infty} \phi_n^2 \sigma^2 |\nabla_x g|^2 \, \ud \mu = \lim_{n \rightarrow \infty } E_n \leq M_g < \infty.$$
\end{proof}
We need the following approximation lemma for the proofs of Lemma~\ref{lem:TLD2} parts (III) and (II). 
\begin{lemma}\label{lem:compactapprox} Let~\ref{ass:sigmabounds} hold. Let $l \in \{2, 3 \ldots\} \cup \{\infty\}$.
    For $g \in C^l_b(\R^d)$ satisfying $\|\cL_\sigma g\|_{L^2(\mu)} < \infty$, set $g_n \coloneqq  \phi_n g \in C_c^l(\R^d)$ for $n \in \N \setminus \{0\}$. Then $\|g_n -g\|_{L^2(\mu)} + \|\cL_\sigma (g_n - g )\|_{L^2(\mu)} \rightarrow 0$ as $n \rightarrow \infty$.  
\end{lemma}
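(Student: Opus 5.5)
The plan is a direct cutoff argument: expand $\cL_\sigma(\phi_n g)$ with the product rule and show that every term involving derivatives of $\phi_n$ tends to zero in $L^2(\mu)$.

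\emph{Convergence of $g_n$.} Since $|g_n - g| = (1-\phi_n)|g| \le \|g\|_\infty \2{|x|\ge n}$, dominated convergence gives $\|g_n - g\|_{L^2(\mu)} \to 0$. Also $g_n \in C_c^l(\R^d)$, because $\phi_n$ is smooth with support in $\{|x|\le 2n\}$.

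\emph{Product rule.} From $\cL_\sigma h = \sigma^2\Delta_x h - \sigma^2\langle \nabla_x U, \nabla_x h\rangle + 2\sigma\langle \nabla_x\sigma, \nabla_x h\rangle$, we obtain
\[
\cL_\sigma(\phi_n g) = \phi_n \cL_\sigma g + g\,\cL_\sigma \phi_n + 2\sigma^2 \langle \nabla_x \phi_n, \nabla_x g\rangle .
\]
Hence
\[
\cL_\sigma(g_n - g) = -(1-\phi_n)\cL_\sigma g + g\,\cL_\sigma\phi_n + 2\sigma^2\langle \nabla_x\phi_n,\nabla_x g\rangle,
\]
and it suffices to show that each of the three terms tends to zero in $L^2(\mu)$.

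\emph{First term.} By hypothesis $\cL_\sigma g \in L^2(\mu)$, and $(1-\phi_n) \to 0$ pointwise with $|1-\phi_n|\le 1$. Dominated convergence gives $\|(1-\phi_n)\cL_\sigma g\|_{L^2(\mu)} \to 0$.

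\emph{Second term.} Write $\cL_\sigma\phi_n = \sigma^2\Delta_x\phi_n - \sigma^2\langle\nabla_x U,\nabla_x\phi_n\rangle + 2\sigma\langle\nabla_x\sigma,\nabla_x\phi_n\rangle$. By \eqref{eq:inter4302}, $\sigma^2|\Delta_x\phi_n| \le M_{\phi,2}$. By \eqref{eq:inter4202} and \ref{ass:sigmabounds}, $\sigma^2|\nabla_x U||\nabla_x\phi_n| \le M_{U,\sigma} M_{\phi,1}$ and $2\sigma|\nabla_x\sigma||\nabla_x\phi_n| \le 2M_D M_{\phi,1}$. So $|g\,\cL_\sigma\phi_n|$ is bounded uniformly in $n$ by $\|g\|_\infty(M_{\phi,2} + M_{U,\sigma}M_{\phi,1} + 2M_D M_{\phi,1})$. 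It is also supported in $\{n\le|x|\le 2n\}$, so it tends to zero pointwise, and dominated convergence (with $\mu$ a probability measure) gives convergence in $L^2(\mu)$.

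\emph{Third term.} Here $|2\sigma^2\langle\nabla_x\phi_n,\nabla_x g\rangle| \le 2M_{\phi,1}\,\sigma|\nabla_x g|\,\2{|x|\ge n}$, again using \eqref{eq:inter4202}. By Lemma~\ref{lem:dirichletfinite}, which applies since $g\in C^2_b(\R^d)$ and $\cL_\sigma g\in L^2(\mu)$, the function $\sigma|\nabla_x g|$ lies in $L^2(\mu)$. Dominated convergence then gives convergence to zero. This is the only step where a boundedness argument alone fails: $\nabla_x g$ is bounded but $\sigma$ may not be square-integrable under $\mu$. So the key input is Lemma~\ref{lem:dirichletfinite}, together with the uniform bound $\sigma|\nabla_x\phi_n|\le M_{\phi,1}$ coming from the Lipschitz property of $\sigma$.

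Combining the three terms with the convergence of $g_n$ proves the lemma.
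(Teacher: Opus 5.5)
Your proposal is correct and follows the paper's proof essentially step for step. You use the same product-rule decomposition of $\cL_\sigma(g_n-g)$ into three terms, and the uniform bounds \eqref{eq:inter4202}--\eqref{eq:inter4302} with dominated convergence for the first two; for the cross term $2\sigma^2\langle \nabla_x\phi_n,\nabla_x g\rangle$, you likewise take the square-integrable dominating function $\sigma|\nabla_x g|$ from Lemma~\ref{lem:dirichletfinite}.
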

\begin{proof}
    Using dominated convergence and the fact that $|g_n - g| \leq |g| \in L^2(\mu)$ we have
\begin{equation}
    \|g_n - g\|_{L^2(\mu)} \rightarrow 0 \quad \text{as } n \rightarrow \infty. \label{eq:inter4401}
\end{equation}
Also, using the product rule for differentiation, we get
\begin{equation}
    \cL_\sigma (g_n - g) = (\phi_n - 1) \cL_\sigma g + g \cL_\sigma \phi_n + 2 \sigma^2 \langle \nabla_x \phi_n , \nabla_x g \rangle. \label{eq:inter4501}
\end{equation}
By assumption, $\cL_\sigma g \in L^2(\mu)$ . 
Therefore, using $|(\phi_n - 1) \cL_\sigma g| \leq |\cL_\sigma g|$ and the dominated convergence theorem, we get that
\begin{equation}
    \|(\phi_n - 1) \cL_\sigma g\|_{L^2(\mu)} \rightarrow 0 \quad \text{as } n \rightarrow \infty. \label{eq:inter4601}
\end{equation}

For the second term in~\eqref{eq:inter4501}, we have
$$g(x) \cL_\sigma \phi_n (x)= g(x)\left[\sigma^2(x) \Delta_x \phi_n(x) - \sigma^2(x)\langle\nabla_x U(x), \nabla_x \phi_n(x) \rangle + 2\sigma(x) \langle \nabla_x\sigma(x), \nabla_x \phi_n (x) \rangle\right],$$
for $x \in \R^d$.
Thus, using~\eqref{eq:inter4202}, \eqref{eq:inter4302} and~\ref{ass:sigmabounds}, we get
$$|g \cL_\sigma \phi_n | \leq \2{|x| \geq n} \|g\|_\infty (M_{\phi, 2} + M_{U, \sigma} M_{\phi, 1} + 2 M_D M_{\phi, 1}) $$
which is uniformly bounded by $\|g\|_\infty (M_{\phi, 2} + M_{U, \sigma} M_{\phi, 1} + 2 M_D M_{\phi, 1}) < \infty$. Hence, by the dominated convergence theorem, 
\begin{equation}
    \lim_{n \rightarrow \infty}\|g \cL_\sigma \phi_n \|_{L^2(\mu)} = 0. \label{eq:inter4801}
\end{equation}

For the third term in~\eqref{eq:inter4501}, we have $|\sigma^2 \langle \nabla_x \phi_n, \nabla_x g \rangle| \leq  \2{|x| \geq n} \sigma^2 |\nabla_x \phi_n|\,|\nabla_x g | \leq \2{|x| \geq n} M_{\phi, 1}\sigma |\nabla_x g | $ using~\eqref{eq:inter4202}. Also, we have the uniform bound $\2{|x| \geq n} M_{\phi, 1}\sigma |\nabla_x g | \leq M_{\phi, 1} \sigma |\nabla_x g |$. Also, we have that $\sigma |\nabla_x g | \in L^2(\mu)$ using Lemma~\ref{lem:dirichletfinite}. Thus, using the dominated convergence theorem, 
\begin{align*}
    \lim_{n \rightarrow \infty} \| \sigma^2 \langle \nabla_x \phi_n , \nabla_x g \rangle\|_{L^2(\mu)}^2 &\leq M_{\phi, 1}^2\lim_{n \rightarrow \infty} \int_{\R^d} \2{|x| \geq n}  \sigma^2 |\nabla_x g |^2 \, \ud \mu\\ &= M_{\phi, 1}^2 \int_{\R^d} \lim_{n \rightarrow \infty} \2{|x| \geq n}  \sigma^2 |\nabla_x g |^2 \, \ud \mu =  0. \numberthis \label{eq:inter4701}
\end{align*}
Using~\eqref{eq:inter4401}, \eqref{eq:inter4501}, \eqref{eq:inter4601}, \eqref{eq:inter4801} and~\eqref{eq:inter4701}, we have that 
$\|g_n -g\|_{L^2(\mu)} + \|\cL_\sigma (g_n - g )\|_{L^2(\mu)} \rightarrow 0$ as $n \rightarrow \infty$. 
\end{proof}

The following lemma is required for the proof of Lemma~\ref{lem:lifts} part~\eqref{eq:L3}.
\begin{lemma}\label{lem:trdensity}
    The set $C_c^1(\R^d \times \R^d)$ is dense in $W_\mathrm{tr}$ with respect to the norm
    $\|\cdot\|_{L^2(\mu_\sigma)} + \|\Ltr (\cdot)\|_{L^2(\mu_\sigma)}$, and dense in
    $W_{\mathrm{tr}^\star}$ with respect to the norm
    $\|\cdot\|_{L^2(\mu_\sigma)} + \|\Ltr^\star (\cdot)\|_{L^2(\mu_\sigma)}$.
\end{lemma}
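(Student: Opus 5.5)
Given $f\in W_{\mathrm{tr}}$, the plan is to approximate it by truncation followed by mollification, aiming for convergence in the graph norm $\|\cdot\|_{L^2(\mu_\sigma)}+\|\Ltr(\cdot)\|_{L^2(\mu_\sigma)}$. The argument for $W_{\mathrm{tr}^\star}$ is the same with $\Ltr^\star$ in place of $\Ltr$.

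\textbf{Step 1: cutoffs adapted to the bounce structure.} For the truncation I will use cutoffs that depend only on $|x|$ and $|v|$: $\chi_n(x,v)\coloneqq\phi_n(x)\phi(|v|/n)$, with $\phi,\phi_n$ as in Section~\ref{sec:proofTLD2}. The choice matters because the reflection $R_{\nabla_xH(x,v)}$ preserves both $x$ and $|v|$, so $\chi_n(x,R_{\nabla_xH}v)=\chi_n(x,v)$. Consequently the jump part of $\Ltr$ commutes with multiplication by $\chi_n$, and we get the exact Leibniz rule
\[
\Ltr(\chi_n f)=\chi_n\Ltr f+f\langle v,\nabla_x\chi_n\rangle .
\]
The velocity factor contributes nothing to the transport term, since $\langle v,\nabla_x\rangle$ does not act on $v$. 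Hence $|\langle v,\nabla_x\chi_n\rangle|\le|v|\,\|\phi'\|_\infty n^{-1}\ind_{\{n\le|x|\le 2n,\,|v|\le 2n\}}\le 2\|\phi'\|_\infty$, which is bounded uniformly in $n$ and tends to $0$ pointwise. By dominated convergence, $\chi_n f\to f$ and $\Ltr(\chi_nf)\to\Ltr f$ in $L^2(\mu_\sigma)$. The truncated functions $\chi_nf$ lie in $C^1_c$, since $f\in C^1$.

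\textbf{Step 2: nothing further is needed.} The elements of $W_{\mathrm{tr}}$ are already $C^1$, so $\chi_nf\in C^1_c(\R^d\times\R^d)$ and no mollification step is required. This gives density in $W_{\mathrm{tr}}$ directly.

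\textbf{Step 3: the adjoint case.} For $W_{\mathrm{tr}^\star}$ the same computation gives $\Ltr^\star(\chi_nf)=\chi_n\Ltr^\star f-f\langle v,\nabla_x\chi_n\rangle$, because the jump term $\langle-\nabla_xH,v\rangle_+[\,\cdot\,]$ also commutes with $\chi_n$. Dominated convergence then applies exactly as in Step 1.

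\textbf{Main obstacle.} The key point, and the place where the argument could fail, is bounding $|v|\,|\nabla_x\phi_n|$ uniformly. An $x$-cutoff alone would not suffice, since $|v|$ is unbounded; pairing it with the $|v|$-cutoff at the same scale $n$ is what makes the bound work. It must also be checked that the velocity cutoff is radial, so that it is invariant under the reflection. Without this, the bounce term would produce $f(x,R v)(\chi_n(x,Rv)-\chi_n(x,v))$ multiplied by the unbounded rate $\lambda_\rb$, and that term cannot be controlled.
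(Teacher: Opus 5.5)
Your proposal is correct and matches the paper's proof. The paper also uses the product cutoff $\phi(|x|/R)\,\phi(|v|/R)$ and exploits $|R_u v|=|v|$ to obtain the exact Leibniz rule $\Ltr(\phi_R g)=\phi_R\Ltr g+g\langle v,\nabla_x\phi_R\rangle$, with the sign of the last term flipped for $\Ltr^\star$. It then bounds $|\langle v,\nabla_x\phi_R\rangle|\le 2\|\phi'\|_\infty\ind_{\{|x|\ge R\}}$ and concludes by dominated convergence, with no mollification.
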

\begin{proof}
    For $R > 0$, set
    $$\phi_R (x, v) \coloneqq \phi(|x|/R)\, \phi(|v|/R), \qquad (x,v) \in \R^d \times \R^d .$$
    As  $\phi$ is constant near the origin, $\phi_R \in C^\infty_c(\R^d \times \R^d)$, and
    $|\nabla_x \phi_R| \leq \|\phi'\|_\infty / R$.

    Let $g \in W_\mathrm{tr}$ and put $g_R \coloneqq \phi_R\, g \in C_c^1(\R^d \times \R^d)$.
    Since $|R_u v| = |v|$ for all $u , v\in \R^d$, we have
    $\phi_R(x , R_{\nabla_x H(x,v)}v) = \phi_R(x, v)$, so we have
    \begin{equation}
        \Ltr g_R = \phi_R \Ltr g + g \langle v , \nabla_x \phi_R \rangle . \label{eq:inter51}
    \end{equation}
    For the second term in~\eqref{eq:inter51}, note that $\nabla_x \phi_R$ vanishes unless
    $|x| \geq R$, and that $\phi(|v|/R) = 0$ unless $|v| \leq 2R$; hence
    $$|\langle v , \nabla_x \phi_R (x,v)\rangle | \leq \frac{|v|}{R}\|\phi'\|_\infty \2{|v| \leq 2R}\,\2{|x| \geq R}
        \leq 2 \|\phi'\|_\infty \2{|x| \geq R}. $$
    Therefore, using dominated convergence ($g^2 \2{ |x |\geq R} \leq g^2$ and $g \in L^2(\mu_\sigma)$), we have
    $$\| g \langle v , \nabla_x \phi_R\rangle \|_{L^2(\mu_\sigma)}^2
        \leq 4 \|\phi'\|_\infty^2 \int_{\{|x| \geq R\} \times \R^d} g^2 \,\ud \mu_\sigma \rightarrow 0 \quad \text{as } R \rightarrow \infty $$
    For the first term in~\eqref{eq:inter51}, $0 \leq \phi_R \leq 1$ and
    $\phi_R \uparrow 1$ pointwise, so $\|\phi_R \Ltr g - \Ltr g\|_{L^2(\mu_\sigma)}  \rightarrow 0$ by
    dominated convergence, using $\Ltr g \in L^2(\mu_\sigma)$. The same argument gives
    $\|g_R - g\|_{L^2(\mu_\sigma)} \rightarrow 0$. 
    Putting the above limits together, we get
    $$\|g_R - g \|_{L^2(\mu_\sigma)} + \|\Ltr (g_R - g)\|_{L^2(\mu_\sigma)} \rightarrow 0 \quad \text{as } R \rightarrow \infty.$$
    For $h\in W_{\mathrm{tr}^\star}$, set
$h_R:=\phi_Rh$.
    The argument for $W_{\mathrm{tr}^\star}$ is identical, using
    $\Ltr^\star h_R = \phi_R \Ltr^\star h - h \langle v , \nabla_x \phi_R\rangle$ in place
    of~\eqref{eq:inter51}.
\end{proof}

\section{Mixing times}\label{sec:appendixchi}
The $\chi^2$-\textit{divergence} between probability measures $\P$ and $\Q$ on a Polish space $H$ is defined as
\begin{equation}
    \chi^2 (\P \| \Q ) \coloneqq  \E_\Q\left[\left(\frac{\ud\P}{\ud\Q} - 1\right)^2\right] \qquad\text{if $\P \ll \Q$ and $\chi^2 (\P \| \Q ) \coloneqq \infty$ otherwise,} \label{eq:chi2def}
\end{equation}
    where $\frac{\ud\P}{\ud\Q}$ is the Radon-Nikodym derivative of $\P$ with respect to $\Q$ when absolute continuity $\P \ll \Q$ holds.
We also use the following notation. Let $f, g: \R \rightarrow \R$. 
 We say that $f =  O (g)$ if $\limsup_{x \rightarrow + \infty} |f(x)|/|g(x)| < \infty$.
     We say that $f = \tilde \Omega (g)$ if $\liminf_{x \rightarrow + \infty} |f(x)|/|g(x)| > 0$.

\begin{proof}[Proof of Corollary~\ref{cor:mixingtime}] As shown in Theorem~\ref{thm:maincnvg}, we have that the Markov semigroup $\left(\cP_{t}\right)_{t \in \R_+}$ satisfies the decay $\|\cP_t f \|_{L^2(\mu_\sigma)}^2 \leq 3 e^{-\nu t } \|f\|_{L^2(\mu_\sigma)}^2$ for all $f \in L^2_0(\mu_\sigma)$, with $\nu > 0$ given by Theorem~\ref{thm:maincnvg}. Let $\mu_0$ be some initial distribution on $\R^d \times \R^d$. Assume that $ \mu_0 \ll  \mu_\sigma$. Then, we have 
\begin{align*}
     \chi^2( \mu_0  \cP_t \|   \mu_\sigma) &= \left\|\frac{\ud  \mu_0 \cP_t}{\ud \mu_\sigma} - 1\right\|^2_{L^2(\mu_\sigma)} = \sup_{ f \in L^2( \mu_\sigma) : \|f\|_{L^2(\mu_\sigma)} = 1} \left\langle \frac{\ud  \mu_0 \cP_t}{\ud\mu_\sigma} - 1, f \right\rangle_{L^2(\mu_\sigma)}^2,
 \end{align*}
 where the first equality follows from the definition \eqref{eq:chi2def} and the second equality is a standard variational representation of the $L^2$-norm.
 Hence, using the Cauchy--Schwarz inequaliy, we obtain
     \begin{align*}
     \chi^2( \mu_0  \cP_t \|   \mu_\sigma) 
    &\leq \left\|\frac{\ud \mu_0}{\ud \mu_\sigma} - 1\right\|_{L^2( \mu_\sigma)}^2 \left( \sup_{f \in L^2_0 ( \mu_\sigma) : \|f\|_{L^2( \mu_\sigma)} = 1} \|  \cP_t f - \mu_\sigma(\cP_t f )\|_{L^2(\mu_\sigma)}^2\right)
  \leq 3\chi^2 ( \mu_0 \| \mu_\sigma) e^{-\nu t }, 
 \end{align*}  
Thus, Corollary~\ref{cor:mixingtime} is implied by Theorem~\ref{thm:maincnvg}.
\end{proof}

\end{document}